\def\thanks#1{\protected@xdef\@thanks{\@thanks
        \protect\footnotetext{#1}}}
\newcommand{\emphblockoption}{drop shadow,
  colframe=black!60,
  colback=black!10,
  coltitle=white!, 
    left=-15pt,
    right=15pt,
    boxrule=0pt,
    arc=-0pt}
\newcommand{\blue}[1]{{\color{blue}{#1}}}
\newcommand{\red}[1]{{\color{red}{#1}}}
\newcommand{\FullTitle}{A Primal-Dual-Assisted Penalty Approach to Bilevel Optimization with Coupled Constraints}
\newcommand{\cmark}{\textcolor{teal}{\ding{51}}}
\newcommand{\xmark}{\textcolor{purple}{\ding{56}}}
\newtheorem{theorem}{Theorem}
\newtheorem{lemma}{Lemma}
\newtheorem{definition}[theorem]{Definition}
\newtheorem{assumption}{Assumption}
\newtheorem{remark}{Remark}
\title{\FullTitle}
\author{%
 Liuyuan Jiang$^\dag$, Quan Xiao$^\dag$, Victor M. Tenorio$^\star$, Fernando Real-Rojas$^\star$\\{\bf Antonio G. Marques$^\star$, Tianyi Chen$^\dag$} \\[5pt]
  $^\dag$Rensselaer Polytechnic Institute, Troy, NY, United States \\
  $^\star$King Juan Carlos University, Madrid, Spain \\
  \texttt{\{jiangl7, xiaoq5, chent18\}@rpi.edu}\\ 
  \texttt{\{victor.tenorio, antonio.garcia.marques\}@urjc.es};~ \texttt{f.real.2018@alumnos.urjc.es}
   \thanks{
The work was supported by NSF project 2134168, NSF CAREER project 2047177, Cisco Research Award, Amazon Research Award, and the IBM-Rensselaer Future of Computing Research Collaboration. }
}
\begin{document}

\maketitle
\doparttoc % 
\faketableofcontents % 
 
\begin{abstract}
Interest in bilevel optimization has grown in recent years, partially due to its applications to tackle challenging machine-learning problems. Several exciting recent works have been centered around developing efficient gradient-based algorithms that can solve bilevel optimization problems with provable guarantees. However, the existing literature mainly focuses on bilevel problems either without constraints, or featuring only simple constraints that do not couple variables across the upper and lower-levels, excluding a range of complex applications. Our paper studies this challenging but less explored scenario and develops a (fully) first-order algorithm, which we term \textbf{BLOCC}, to tackle \textbf{B}i\textbf{L}evel \textbf{O}ptimization problems with \textbf{C}oupled  \textbf{C}onstraints.  We establish rigorous convergence theory for the proposed algorithm and demonstrate its effectiveness on two well-known real-world applications - hyperparameter selection in support vector machine (SVM) and infrastructure planning in transportation networks using the real data from the city of Seville.

\end{abstract}

\section{Introduction}
\label{introduction}
Bilevel optimization (BLO) approaches are pertinent in various machine learning problems, including hyperparameter optimization \citep{maclaurin2015gradient, franceschi2018bilevel}, meta-learning \citep{finn2017model}, and reinforcement learning \citep{stadie2020learning,shen2024principled}. Moreover, the ability to handle BLO with constraints is particularly important, as these constraints appear in applications such as pricing \citep{cote2003bilevel}, transportation \citep{marcotte1986network,alizadeh2013two}, and kernelized SVM \citep{hofmann2008kernel}. Although there is extensive research on BLO problems without constraints or with uncoupled constraints  \citep{hong2020ac, chen2022single, chen2022fast,khanduri2023linearly,kwon2023penalty,shen2023penalty}, solutions for BLO problems with coupled constraints (CCs) remain limited; see details in Table \ref{tab: literature comparison}. However, it is of particular interest to investigate BLO with lower-level CCs. Taking infrastructure planning in a transportation network as an example, the lower-level problem seeks to optimize the total utility given a network configuration (determined by an upper-level variable), and the upper-level problem is to optimize the network configuration itself. 

Motivated by this, we consider the coupled-constrained BLO problem in the following form
\begin{subequations}
\label{eq: problem 1}
    \begin{align}
     \min_{x\in \mathcal{X}}& ~  f(x,y^*_g(x)) 
    \\
    \text{s.t. } &\quad  y^*_g(x) := \arg \min_{y \in \mathcal{Y}(x)} g(x,y) \quad \text{with } \quad \mathcal{Y}(x):= \{y\in \mathcal{Y}: g^c(x,y)\leq 0\} \label{eq: lower-level problem}
\end{align}
\end{subequations}
where $f:\mathbb{R}^{d_x}\times \mathbb{R}^{d_y} \rightarrow \mathbb{R}$ is the upper-level objective, $g: \mathbb{R}^{d_x}\times \mathbb{R}^{d_y}  \rightarrow \mathbb{R}$ is the lower-level objective which is strongly convex, $g^c(x,y):\mathbb{R}^{d_x}\times \mathbb{R}^{d_y} \rightarrow \mathbb{R}^{d_c}$ defines the lower-level CCs, and $\mathcal{X}\subseteq \mathbb{R}^{d_x}, \mathcal{Y}\subseteq \mathbb{R}^{d_y}$ are the domain of $x$ and $y$ that are easy to project, such as the Euclidean ball. 

The difficulty of solving \eqref{eq: problem 1} lies in the coupling between the upper and lower-level problems. To develop efficient methods, previous literature started from the unconstrained BLO problem and solved it by implicit gradient descent (IGD) method \citep{ghadimi2018approximation,hong2020ac,ji2020provably,chen2022single,chen2021closing,khanduri2021near,shen2022single, Li2022fully,sow2022convergence} and penalty-based methods \citep{liu2021value,liu2022bome,shen2022single,kwon2023fully,kwon2023penalty}, to name a few. To solve BLO with CCs, AiPOD \citep{xiao2022alternating} and GAM \citep{xu2023efficient} have investigated the IGD method under different constraint settings. However, AiPOD only considered equality constraints, and GAM lacked finite-time convergence guarantees. Leveraging a penalty reformulation, \citep{yao2024constrained} developed a Hessian-free method with finite-time convergence. However, a key algorithm step in \citep{yao2024constrained} is a joint projection of the current iterate $(x,y)$ onto the coupled constraint set. This projection, required at each iteration, becomes particularly challenging when $g^c(x,y)$ is not jointly convex and can be computationally expensive for large-scale problems with a high number of variables ($d_x, d_y$) or constraints ($d_c$).

To this end, this paper aims to address the following question
\begin{center}
\textit{
Can we develop an efficient algorithm that bypasses joint projections on $g^c(x,y)$ and quickly solves the BLO problem with coupled inequality constraints in \eqref{eq: problem 1}?
}    
\end{center}

We address this question affirmatively, focusing on the setting where the lower-level objective, $g(x,y)$, is strongly convex in $y$, and the constraints $g^c(x,y)$ are convex in $y$. To avoid implementing a joint projection, we put forth a novel single-level primal-dual-assisted penalty reformulation that decouples $x$ and $y$. Specifically, with $\mu\in \mathbb{R}^{d_c}$ denoting the Lagrange multiplier of \eqref{eq: lower-level problem}, we propose solving 
\begin{tcolorbox}[emphblock]
\begin{subequations}\label{eq:penalty2}
    \begin{align}
\min_{x\in \mathcal{X}} ~~F_{\gamma}(x) &:= \max_{\mu \in \mathbb{R}^{dx}_+}  \min_{ y \in \mathcal{Y}} f(x,y)+ \underbrace{\gamma (g(x,y)-v(x))}_{\text{penalty term}}+\underbrace{\langle \mu, g^c(x,y)  \rangle}_{\text{Lagrangian term}} \label{eq: F gamma}\\
    \text{where} & \quad v(x):=\min_{y\in \mathcal{Y}(x)}g(x,y)\label{eq: v x}
\end{align}
\end{subequations}
\end{tcolorbox}
where the penalty constant $\gamma$ controls the distance between $y$ and $y^*_g(x)$ by penalizing $g(x,y)$ to its value function $v(x)$, and the Lagrangian term penalizes the constraint violation of $g^c(x,y)$. 

However, recognizing the max-min subproblems involved in \eqref{eq: F gamma}, it becomes computationally costly to evaluate the penalty function $F_{\gamma}(x)$ and its gradient. 
% But existing max-min optimizers can the objective function to its optimum \citep{thekumparampil2019efficient,lin2020near,hamedani2021primal}, which cannot be directly applied to existing our setting with finite-time convergence analysis. 
To this end, we pose the following question
\begin{center}
\textit{Can we develop efficient algorithms to solve the max-min subproblem and evaluate $\nabla F_{\gamma}(x)$?}
\end{center}
We answer this question by proving that this reformulation exhibits several favorable properties, including smoothness. These properties are critical for designing gradient-based algorithms and characterizing their performance. However, the presence of the CCs renders the calculation of the gradient $\nabla F_{\gamma}(x)$ more challenging than for its unconstrained counterpart \citep{shen2023penalty}. 
Building upon this, we design a primal-dual gradient method with rigorous convergence guarantees for the BLO with general inequality CCs, and provide an improved result for the case of $g^c$ being affine in $y$.
 
\subsection{Main contributions}

In a nutshell, our main contributions are outlined below. 

\begin{itemize}\setlength\itemsep{-0.15em}
    \item[\bf C1)] In Section \ref{sec2}, leveraging the Lagrangian duality theorem, we introduce the function $F_{\gamma}(x)$ in \eqref{eq: F gamma} as a penalty-based reformulation of \eqref{eq: problem 1}, establish the continuity and smoothness of $F_{\gamma}(x)$, and develop a novel way to compute its gradient.

    \item[\bf C2)] In Section \ref{sec: main results}, we develop BLOCC, a fully first-order algorithm to tackle BLO problems with CCs. With $\epsilon$ being the target error in terms of the generalized gradient norm square of $F_{\gamma}(x)$, we establish that the iteration complexity under the generic constraint $g^c(x,y)$ in \eqref{eq: lower-level problem} is $\tilde{\cal O}(\epsilon^{-2.5})$, which reduces to $\tilde{\cal O}(\epsilon^{-1.5})$ when the constraint $g^c(x,y)$ is affine in $y$.

    \item[\bf C3)] In Section \ref{sec: experiments}, we apply our BLOCC algorithm to two real-world applications: SVM model training and transportation network planning. By comparison with LV-HBA \citep{yao2024constrained} and GAM \citep{xu2023efficient}, we demonstrate the algorithm's effectiveness and its robustness to large-scale problems.

\end{itemize}
\begin{table}[tbp]
\fontsize{9}{9}\selectfont
\centering
\begin{tabular}{c|c|c|c}
\hline\hline
 & LL constraint  & First Order & Complexity \\
\hline
\multirow{2}{*}{\centering {\bf BLOCC}} & $g^c(x,y)\leq 0$ convex in $y$ and LICQ holds;
& \multirow{2}{*}{\centering \cmark} & $\tilde{\cal O}(\epsilon^{-2.5})$; \\
&Special case: $g^c(x,y)$ affine in $y$ & & $\tilde{\cal O}(\epsilon^{-1.5})$  \\
\hline
LV-HBA & $g^c(x,y)\leq 0$ convex in $x\times y$ & \cmark & ${\cal O}(\epsilon^{-3})$ \\
\hline
GAM &  $g^c(x,y)\leq 0$ convex in $x\times y$ and LICQ holds & \xmark & \xmark \\
\hline
BVFSM & $g^c(x,y)\leq 0$ satisfying other requirements & \cmark & \xmark  \\
\hline
AiPOD & $g^c(x,y) = Ay-b(x)=0$ & \xmark & ${\cal O}(\epsilon^{-1.5})$ \\
\hline
\hline
\end{tabular}
\vspace{0.2cm}
\caption{Comparison of our work with LV-HBA \citep{yao2024constrained}, BVFSM \citep{liu2023value}, AiPOD \citep{xiao2022alternating}, and Gradient Approximation method (GAM) \citep{xu2023efficient}. LL convergence is on metric the squared distance of $y_t$ to its optimal solution, and UL convergence is on squared (generalized) gradient norm.}
\label{tab: literature comparison}
\vspace{-0.6cm}
\end{table}

\subsection{Related works}

BLO has a long history, dating back to the seminal work of \citep{bracken1973mathematical}. It has inspired a rich body of literature, e.g., \citep{ye1995optimality, vicente1994bilevel, colson2007overview, sinha2017review}. The recent focus on BLO is centered on developing efficient gradient-based approaches with provable finite-time guarantees. 

\textbf{Methods for BLO without constraints.} A cluster of BLO gradient-based approaches gravitates around the implicit gradient descent (IGD) method \citep{pedregosa2016hyperparameter}, where the key idea is to approximate the hypergradient by the implicit function theorem. The finite-time convergence of IGD was first established in \citep{ghadimi2018approximation} for unconstrained strongly-convex lower-level problems. Subsequent works \citep{hong2020ac,ji2020provably,chen2022single,chen2021closing,khanduri2021near,shen2022single, Li2022fully,sow2022convergence,yang2023accelerating,ji2021bilevel} improved the convergence rates under various settings. 
Another cluster of works is based on iterative differentiation (ITD) \citep{maclaurin2015gradient, franceschi2017forward, nichol2018firstorder, shaban2019truncated}, which estimates the hypergradient by differentiating the entire iterative algorithm used to solve the lower-level problem with respect to the upper-level variables. The finite-time guarantee was first established in \citep{grazzi2020iteration,liu2021towards,ji2022will,bolte2022automatic}. Viewing the lower-level problem as a constraint such as in \citep{sinha2018bilevel}, penalty-based methods have also emerged as a promising approach for BLO. Dated back to \citep{ye1997exact}, this line of works \citep{liu2021value,mehra2021penalty,liu2022bome,kwon2023fully,shen2023penalty,gao2022value,lu2024slm} reformulated the original BLO as the single-level problems with various penalty terms and leveraged first-order methods to solve them.

\textbf{BLO with constraints.} While substantial progress has been made for unconstrained BLO, the analysis for constrained BLO is more limited. Upper-level constraints of the form $x\in\mathcal{X}$ were considered in \citep{hong2020ac, chen2022single, chen2022fast}. 
For the lower-level uncoupled constraint, SIGD \citep{khanduri2023linearly} considered the uncoupled constraint $Ay\leq b$ and
achieved asymptotic convergence, \citep{kwon2023penalty,shen2023penalty} employed penalty reformulation and considered both upper and lower uncoupled constraints. However, the literature on \textbf{BLO with CCs} is scarce. BVFSM \citep{liu2023value} conducted a penalty-based method to avoid the calculation of the Hessian, as IGD methods do. However, only asymptotic convergence was achieved. GAM \citep{xu2023efficient} investigated the IGD method under inequality constraints while failing to provide finite-time convergence results as well. AiPOD \citep{xiao2022alternating} also applied IGD and successfully achieved finite-time convergence, but it only considered equality constraints. LV-HBA \citep{yao2024constrained} considered inequality constraints and constructed a penalty-based reformulation. However, it employed a joint projection of $(x,y)$ onto $\{\mathcal{X}\times \mathcal{Y}:g^c(x,y)\leq 0\}$ which is computationally inefficient when there are many constraints or when $g^c(x,y)$ is not jointly convex. 
After our initial submission, we found a concurrent work \citep{yao2024overcoming} posted on ArXiv, which used Lagrange duality theory differently from ours, applying it to construct a new smoothed penalty term. However, it does not quantify the relationship between the relaxation of this penalty and the relaxation of lower-level optimality, and it does not guarantee lower-level feasibility.
We summarized prior works on BLO with lower-level CCs in Table \ref{tab: literature comparison}.

\section{Primal-dual Penalty-based Reformulation}\label{sec2}
In this section, our goal is to construct a primal-dual-assisted penalty reformulation for our BLOCC problem. The technical challenge comes from finding a suitable penalty function for BLO with CCs.

\subsection{The challenges in BLO with coupled constraints}
\label{sec: challenges in coupled cons}

Here, we will elaborate on the two technical challenges of BLO with CCs. 

The {\em first challenge} associated with the presence of CCs is the difficulty to find the descent direction for $x$, which involves finding the closed-form expression of the gradient $\nabla v(x)$. The expression without CCs, $\nabla v(x) = \nabla_x g(x,y_g^*(x))$ provided in \citep{shen2023penalty,kwon2023penalty,kwon2023fully,liu2022bome}, is not applicable. 
\begin{wrapfigure}{r}{0.3\textwidth} 
\vspace{-0.1cm}
    \centering
    \includegraphics[width=0.3\textwidth]{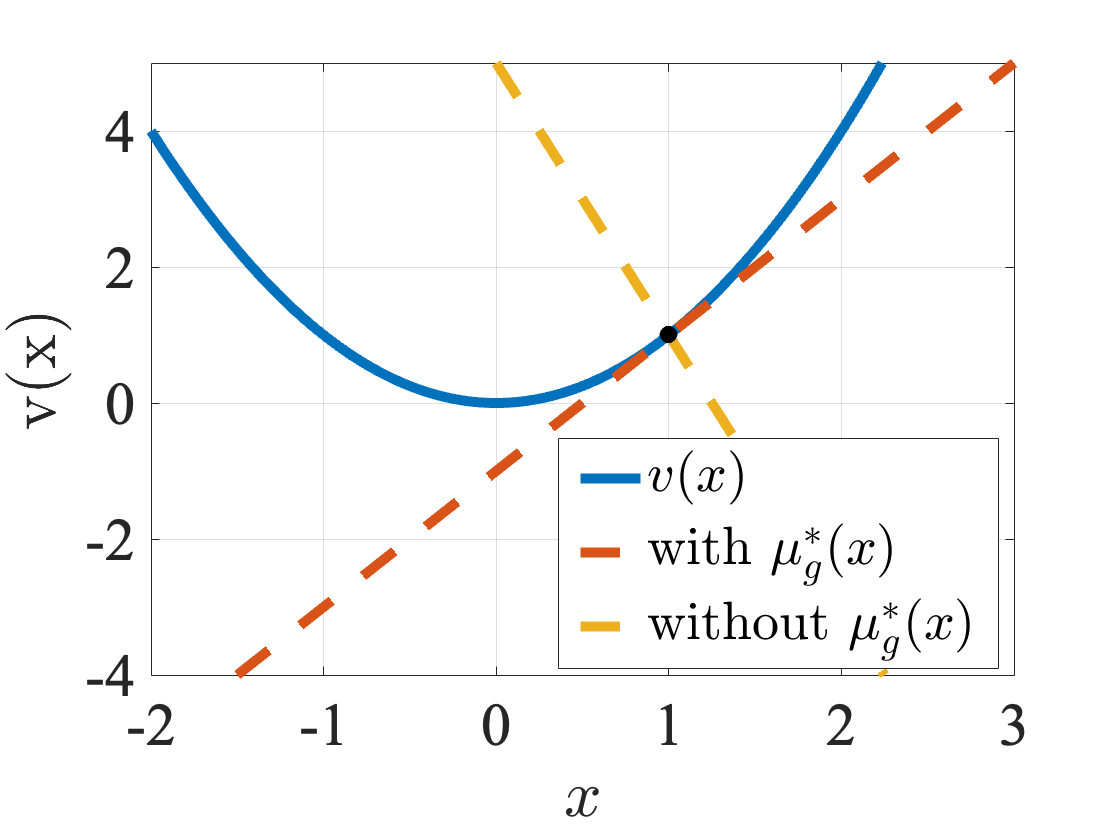} % replace 'example-image' with your image file
    
    \caption{Calculation of $\nabla v(x)$. The blue line is $v(x)$, the the yellow dashed line is calculated by the formulation given in \citep{shen2023penalty,kwon2023fully}, while red dashed line is derived by our BLOCC. It can be seen that $\nabla v(x)$ without the Lagrange multiplier is in the opposite direction of the true gradient.}
    \label{fig:wrapped}
    \vspace{-0.3cm}
\end{wrapfigure}
For example, when $g(x,y)=(y-2x)^2$ and $g^c(x,y)=3x-y$, the optimal lower-level solution is $y_g^*(x)=3x$ and thus, $v(x)=x^2$ with $\nabla v(x)=2x$. However, $\nabla_x g(x,y_g^*(x))=-4x\neq\nabla v(x)$. The closed form gradient for BLO with CCs should be \eqref{eq: dv}, which considers a Lagrange term that will be illustrated later in this paper.
In Figure \ref{fig:wrapped}, we present the gradient $\nabla v(x)$ without the Lagrange multiplier in \citep{shen2023penalty,kwon2023penalty,kwon2023fully,liu2022bome} and ours with the Lagrange term. In this example, the gradient without the Lagrange term leads to the opposite direction to the true gradient.

The {\em second challenge} associated with the presence of CCs is the difficulty of performing the joint projection. If we directly extend the penalty reformulation in \citep{shen2023penalty,kwon2023fully}, we could treat the coupling constraint set $\mathcal{Y}(x)$ as a joint constraint $\{(x,y): g^c(x,y)\leq 0\}$, and employ a joint projection of $(x,y)$ to ensure the feasibility. However, this can be computationally inefficient when the problem is large-scale, i.e. the number of variables or constraints is large. The detailed analysis of computational cost can be seen in Appendix \ref{appendix: Complexity Analysis}. Moreover, when $g^c(x,y)$ is complex, the projection may not have a closed form and may not even be well-defined if $g^c(x,y)$ is not jointly convex.

 \subsection{The Lagrangian duality-based penalty reformulation}
\label{sec: Lagrangian Reformulation}

 Before proceeding, we summarize the assumptions as follows.

\begin{assumption}[Lipschitz Continuity]
\label{ass: lipschitz}
    Assume that $f$, $\nabla f$, $\nabla g$, $g^c$ and $\nabla g^c$, and are respectively $l_{f,0}$, $l_{f,1}$, $l_{g,1}$, $l_{g^c,0}$, and $l_{g^c,1}$-Lipschitz jointly over $(x,y)$, and $g$ is $l_{g_x,0}$-Lipschitz in $x$.
\end{assumption}

\begin{assumption}[Convexity in $y$]
\label{ass: convexity}
    For any given $x\in \mathcal{X}$, $g(x,y)$ and $g^c(x,y)$ are $\alpha_g$-strongly convex and convex in $y$, respectively. 
\end{assumption}

\begin{assumption}[Domain Feasibility]
\label{ass: domain}
Domain $\mathcal{X}\subseteq\mathbb{R}^{d_x}$ and $\mathcal{Y}\subseteq\mathbb{R}^{d_y}$ are non-empty, closed, and convex. For any given $x\in \mathcal{X}$, $\mathcal{Y}(x):=\{y\in \mathcal{Y}: g^c(x,y)\leq 0\}$ is non-empty.
\end{assumption}
\begin{assumption}[Constraint Qualification]
\label{ass: LICQ} For any given $x\in \mathcal{X}$, $g^c$ satisfies the Linear Independence Constraint Qualification (LICQ) condition in $y$.
\end{assumption}

The Lipschitz continuity in Assumption \ref{ass: lipschitz} and the strong convexity of $g$ over $y$ in Assumption \ref{ass: convexity} are conventional \citep{ghadimi2018approximation,hong2020ac,kwon2023fully,xiao2022alternating,ji2021bilevel,chen2021closing}. Moreover, we only require $g^c(x,y)$ to be convex in $y$ rather than: i) jointly convex in $(x,y)$ as in \citep{yao2024constrained}, or ii) linear as in \citep{khanduri2023linearly,xiao2022alternating}. Assumption \ref{ass: domain} pertains to the convexity and closeness of the domain, which is also conventional, and Assumption \ref{ass: LICQ} is a standard constraint qualification condition.

To build a penalty reformulation for BLO with lower-level CCs, the first challenge is to find a penalty that regulates $\|y-y_g^*(x)\|$. 
In the following lemma, we show that $g(x,y)-v(x)$ is a good choice.

\begin{lemma}
\label{lemma: quadratic growth}

    Suppose that Assumptions \ref{ass: convexity}-\ref{ass: LICQ} hold and $v(x)$ is defined as in \eqref{eq: v x}. Then, it holds that
    \begin{enumerate}
        \item[c1)] $g(x,y)-v(x) \geq \frac{\alpha_g}{2} \|y-y^*_g(x)\|^2$,~ for all $ y\in \mathcal{Y}(x)$; and
        \item[c2)] $g(x,y)=v(x)$ if and only if $y = y^*_g(x)$,~ for all $y \in \mathcal{Y}(x)$.
    \end{enumerate}
\end{lemma}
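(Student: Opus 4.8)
The plan is to prove the two claims using the first-order optimality (KKT) conditions for the lower-level problem \eqref{eq: v x}, which are guaranteed to characterize the minimizer because $g$ is strongly convex in $y$, $\mathcal{Y}(x)$ is convex (Assumptions \ref{ass: convexity}, \ref{ass: domain}), and LICQ holds (Assumption \ref{ass: LICQ}) so that a Lagrange multiplier exists. Fix $x\in\mathcal{X}$ and write $y^* := y^*_g(x)$; by strong convexity $y^*$ is the unique minimizer of $g(x,\cdot)$ over $\mathcal{Y}(x)$, so $v(x)=g(x,y^*)$. Let $\mu^*\in\mathbb{R}^{d_c}_+$ be the KKT multiplier for the constraints $g^c(x,\cdot)\le 0$ (ignoring for the moment the simple set $\mathcal{Y}$, or treating its normal cone separately), so that the Lagrangian $L(y):=g(x,y)+\langle \mu^*, g^c(x,y)\rangle$ satisfies the stationarity condition $-\nabla_y L(y^*)\in N_{\mathcal{Y}}(y^*)$ together with complementary slackness $\langle \mu^*, g^c(x,y^*)\rangle = 0$.

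For claim c1), the key point is that $L(\cdot)$ is $\alpha_g$-strongly convex in $y$ (since $g(x,\cdot)$ is $\alpha_g$-strongly convex and $\langle\mu^*, g^c(x,\cdot)\rangle$ is convex, being a nonnegative combination of convex functions). Strong convexity combined with the stationarity of $y^*$ for $L$ over $\mathcal{Y}$ gives, for all $y\in\mathcal{Y}$,
\begin{equation}
L(y) \ge L(y^*) + \langle \nabla_y L(y^*), y-y^*\rangle + \tfrac{\alpha_g}{2}\|y-y^*\|^2 \ge L(y^*) + \tfrac{\alpha_g}{2}\|y-y^*\|^2,
\end{equation}
where the second inequality uses $\langle \nabla_y L(y^*), y-y^*\rangle \ge 0$ by the variational stationarity condition. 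Now for $y\in\mathcal{Y}(x)$ we have $g^c(x,y)\le 0$ and $\mu^*\ge 0$, so $\langle\mu^*, g^c(x,y)\rangle\le 0$, hence $g(x,y)\ge L(y)$; and $L(y^*) = g(x,y^*) = v(x)$ by complementary slackness. Chaining these gives $g(x,y)-v(x)\ge L(y)-L(y^*)\ge \tfrac{\alpha_g}{2}\|y-y^*\|^2$, which is c1).

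For claim c2), the "if" direction is immediate: $y=y^*$ gives $g(x,y)=g(x,y^*)=v(x)$. The "only if" direction follows directly from c1): if $y\in\mathcal{Y}(x)$ and $g(x,y)=v(x)$, then $0 = g(x,y)-v(x)\ge \tfrac{\alpha_g}{2}\|y-y^*\|^2 \ge 0$, forcing $\|y-y^*\|=0$, i.e. $y=y^*$. I expect the main subtlety — rather than a deep obstacle — to be the careful handling of the two sources of first-order conditions at $y^*$: the explicit inequality constraints $g^c(x,\cdot)\le 0$ (handled via the multiplier $\mu^*$, whose existence rests on LICQ) and the abstract constraint $y\in\mathcal{Y}$ (handled via its normal cone). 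One must make sure the strong-convexity inequality is applied to the Lagrangian that has folded in only the $g^c$ constraints while keeping $\mathcal{Y}$ as the feasible set, and that complementary slackness is invoked to identify $L(y^*)$ with $v(x)$; everything else is a routine consequence of strong convexity and nonnegativity of $\mu^*$.
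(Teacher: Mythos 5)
Your proof is correct and follows essentially the same route as the paper: both fold the coupled constraints into the Lagrangian with the optimal multiplier $\mu_g^*(x)$, observe that this Lagrangian remains $\alpha_g$-strongly convex in $y$, and deduce the quadratic growth (the paper cites quadratic-growth results from Karimi et al.\ and Drusvyatskiy--Lewis, whereas you derive the same inequality directly from strong convexity, stationarity over $\mathcal{Y}$, and complementary slackness). Your explicit handling of $g(x,y)\ge L(y)$ on $\mathcal{Y}(x)$ and $L(y^*)=v(x)$ is a self-contained version of the same argument, so no gap.
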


The technical challenge of proving this lemma lies in showing the quadratic growth property as in c1) of Lemma \ref{lemma: quadratic growth}. For BLO without the CCs, this naturally holds as the lower-level objective $g(x,y)$ is strongly convex in $y$. For BLO with CCs, one needs to use the Lagrangian duality theorem. The full proof of Lemma \ref{lemma: quadratic growth} is given in Appendix \ref{appendix: quadratic growth proof}, where the key challenge is to show the invariance of the modulus of strong convexity in the Lagrangian reformulated lower-level objective. 

With $\gamma$ denoting a penalty constant, we consider the following penalty reformulation:
\begin{align}\label{eq:penalty1}
    \min_{(x,y)\in\{\mathcal{X}\times \mathcal{Y}:g^c(x,y)\leq 0\}}f(x,y)+\gamma (g(x,y)-v(x))
\end{align}
where the penalty term confines the squared Euclidean distance from $y$ to $y_g^*(x)$. Furthermore, 
to avoid projecting $(x,y)$ onto the $\{\mathcal{X}\times \mathcal{Y}:g^c(x,y)\leq 0\}$, we propose the  primal-dual-assisted penalty reformulation, which was defined in \eqref{eq:penalty2}. The approximate equivalence of the reformulation \eqref{eq:penalty2} to the original BLO problem \eqref{eq: problem 1} is established in the following theorem. 

\begin{theorem}[Equivalence]
\label{thm: lagrangian reformulation of penalty}
    Suppose that $f$ is Lipschitz in $y$ and $\nabla f$ is $l_{f,1}$-Lipschitz in $(x,y)$ in Assumption \ref{ass: lipschitz} hold, and Assumptions \ref{ass: convexity}-\ref{ass: LICQ} hold. Then, solving the $\epsilon$-approximation problem of \eqref{eq: problem 1}: 
    \begin{align}
        \min_{(x,y)\in\{\mathcal{X}\times \mathcal{Y}:g^c(x,y)\leq 0\}} f(x,y) ~~~\text{ s.t. }~~ \| y-y^*_g(x)\|^2\leq \epsilon,
    \label{eq: epsilon app prob}
    \end{align}
    is equivalent to solve the primal-dual penalty reformulation in \eqref{eq:penalty2} with $\gamma={\cal O}(\epsilon^{-0.5})$ and  $\gamma > \frac{l_{f,1}}{\alpha_g}$.
\end{theorem}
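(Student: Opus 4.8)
The plan is to split the claimed equivalence into two independent reductions: (i) a \emph{Lagrangian‑duality} step showing that $\min_{x\in\mathcal X}F_\gamma(x)$ coincides \emph{exactly} with the joint penalty problem \eqref{eq:penalty1}; and (ii) a classical \emph{quadratic‑penalty} step showing that \eqref{eq:penalty1} is ${\cal O}(\sqrt\epsilon)$‑equivalent to the $\epsilon$‑approximation problem \eqref{eq: epsilon app prob} once $\gamma$ is taken of order $\epsilon^{-0.5}$. Throughout I would use that Assumptions \ref{ass: convexity}--\ref{ass: domain} and strong convexity of $g$ make $v(x)$ and $y^*_g(x)$ well defined and finite for every $x\in\mathcal X$, so all the ``evaluate at a feasible point'' comparisons below are legitimate (and I would assume the relevant minima are attained, or pass to $\delta$‑minimizers).

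For step (i), fix $x\in\mathcal X$. Since $\nabla f$ is $l_{f,1}$‑Lipschitz, $f(x,\cdot)$ is $l_{f,1}$‑weakly convex, while $g(x,\cdot)$ is $\alpha_g$‑strongly convex; hence for every $\mu\ge 0$ the map $y\mapsto f(x,y)+\gamma(g(x,y)-v(x))+\langle\mu,g^c(x,y)\rangle$ is $(\gamma\alpha_g-l_{f,1})$‑strongly convex, and this modulus is positive \emph{precisely because} $\gamma>l_{f,1}/\alpha_g$. Thus the inner $\min_{y\in\mathcal Y}$ is attained at a unique point, and the undualized inner problem $\min_{y\in\mathcal Y(x)}f(x,y)+\gamma(g(x,y)-v(x))$ is a convex program with strongly convex objective over the nonempty closed convex set $\mathcal Y(x)$, so it also has a unique minimizer. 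Assumption \ref{ass: LICQ} (LICQ) guarantees that this minimizer admits KKT multipliers, and for a convex program the existence of KKT multipliers is equivalent to zero duality gap; therefore $\max_{\mu\ge 0}\min_{y\in\mathcal Y}=\min_{y\in\mathcal Y(x)}$, i.e. $F_\gamma(x)=\min_{y\in\mathcal Y(x)}f(x,y)+\gamma(g(x,y)-v(x))$. Minimizing over $x\in\mathcal X$ then shows $\min_{x\in\mathcal X}F_\gamma(x)$ equals the value of \eqref{eq:penalty1} with the same optimal $x$'s, so it remains to relate \eqref{eq:penalty1} to \eqref{eq: epsilon app prob}.

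For step (ii), let $(x_\gamma,y_\gamma)$ be a global minimizer of \eqref{eq:penalty1}. Comparing with the feasible point $(x_\gamma,y^*_g(x_\gamma))$, which has zero penalty by c2) of Lemma \ref{lemma: quadratic growth}, and using that $f(x,\cdot)$ is $l_{f,0}$‑Lipschitz, gives $\gamma(g(x_\gamma,y_\gamma)-v(x_\gamma))\le f(x_\gamma,y^*_g(x_\gamma))-f(x_\gamma,y_\gamma)\le l_{f,0}\|y_\gamma-y^*_g(x_\gamma)\|$. Plugging in the quadratic‑growth bound c1), $g(x_\gamma,y_\gamma)-v(x_\gamma)\ge\tfrac{\alpha_g}{2}\|y_\gamma-y^*_g(x_\gamma)\|^2$, and cancelling a factor $\|y_\gamma-y^*_g(x_\gamma)\|$ yields $\|y_\gamma-y^*_g(x_\gamma)\|\le 2l_{f,0}/(\alpha_g\gamma)$, hence $\|y_\gamma-y^*_g(x_\gamma)\|^2\le\epsilon$ as soon as $\gamma\ge 2l_{f,0}/(\alpha_g\sqrt\epsilon)={\cal O}(\epsilon^{-0.5})$; so $(x_\gamma,y_\gamma)$ is feasible for \eqref{eq: epsilon app prob}. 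Writing $p^\star,p^\star_\epsilon$ for the optimal values of \eqref{eq: problem 1} and \eqref{eq: epsilon app prob}: every BLO‑feasible point is feasible for \eqref{eq: epsilon app prob}, so $p^\star_\epsilon\le p^\star$; evaluating the penalty objective at the BLO optimum gives $f(x_\gamma,y_\gamma)\le(\text{value of }\eqref{eq:penalty1})\le p^\star$; feasibility gives $f(x_\gamma,y_\gamma)\ge p^\star_\epsilon$; and a one‑sided Lipschitz estimate at a minimizer $(\bar x,\bar y)$ of \eqref{eq: epsilon app prob} (compare $f(\bar x,\bar y)$ with the BLO‑feasible $f(\bar x,y^*_g(\bar x))\ge p^\star$) gives $p^\star\le p^\star_\epsilon+l_{f,0}\sqrt\epsilon$. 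Chaining these yields $|f(x_\gamma,y_\gamma)-p^\star_\epsilon|\le l_{f,0}\sqrt\epsilon$, i.e. a global solution of \eqref{eq:penalty2} is an ${\cal O}(\sqrt\epsilon)$‑optimal solution of \eqref{eq: epsilon app prob}, and conversely an exact solution of \eqref{eq: epsilon app prob} has the same objective up to ${\cal O}(\sqrt\epsilon)$ --- the asserted equivalence.

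I expect step (i) to be the main obstacle: the swap of $\max_\mu$ and $\min_y$ in \eqref{eq: F gamma} requires strong duality of the inner constrained convex program \emph{uniformly in $x\in\mathcal X$}, and this is exactly where the two less‑standard hypotheses are used --- $\gamma>l_{f,1}/\alpha_g$ to convexify the nonconvex $f$ plus convex penalty into a strongly convex inner objective, and LICQ (Assumption \ref{ass: LICQ}) to ensure the inner minimizer carries a dual‑optimal multiplier with no duality gap, and one must verify these certificates exist for every $x$, not merely at the penalty minimizer. The penalty argument in step (ii) is then the textbook computation above.
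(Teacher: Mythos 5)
Your proposal is correct and follows essentially the same route as the paper: quadratic growth from Lemma \ref{lemma: quadratic growth} plus Lipschitzness of $f$ in $y$ to tie the penalty problem \eqref{eq:penalty1} to \eqref{eq: epsilon app prob} with $\gamma={\cal O}(\epsilon^{-0.5})$, the min--min decoupling over $x$ and $y$, strong convexity of the inner objective from $\gamma>l_{f,1}/\alpha_g$, and Lagrangian duality (justified by convexity of $g^c$ in $y$ together with the constraint qualification) to pass to the max--min form \eqref{eq:penalty2}. The only difference is that your step (ii) carries out the penalty-equivalence computation explicitly (the comparison at $(x_\gamma,y_g^*(x_\gamma))$ and the value sandwich), whereas the paper outsources exactly this step to Theorems 1 and 2 of \citep{shen2023penalty}.
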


The detailed proof is provided in Appendix \ref{appendix: proof of thm: lagrangian reformulation of penalty}. By setting $\gamma = {\cal O}(\epsilon^{-0.5})$, we effectively state the equivalence between the penalty reformulation in \eqref{eq:penalty1} and the approximated original problem \eqref{eq: epsilon app prob}. We can then decouple the joint minimization on $(x,y)$ to a min-min problem on $x$ and $g^c$ constrained $y$. For the inner minimization problem in $y$, we choose $\gamma $ in a way that $\gamma \alpha_g - l_{f,1}>0$ holds. This ensures the objective in \eqref{eq: F gamma} being strongly convex in $y$, as $l_{f,1}$-smoothness ensures a lower bound for negative curvature of $f(x,y)$.  Furthermore, the convexity of $g^c(x,y)$ in $y$ validates the strong duality theorem in \citep{ruszczynski2006nonlinear,ito2008lagrange}, thereby enabling the max-min primal-dual reformulation in \eqref{eq:penalty2}.

\subsection{Smoothness of the penalty reformulation}
\label{sec: Outer Problem}

To evaluate $F_{\gamma}(x)$ defined in \eqref{eq: F gamma}, we can find the solution to the inner max-min problem:
\begin{align}
    \label{eq: y_F,mu_F}
        (\mu_F^*(x),y_F^*(x)) := \arg \max_{\mu \in \mathbb{R}^{dx}_+}  \min_{ y \in \mathcal{Y}} \underbrace{ f(x,y)+ \gamma (g(x,y)-v(x))+\langle \mu, g^c(x,y)  \rangle }_{=:L_F(\mu,y;x)}.
\end{align}
The uniqueness of $y_F^*(x)$ and $\mu_F^*(x)$ is guaranteed under Assumptions \ref{ass: convexity} and \ref{ass: LICQ} (see Lemma \ref{lemma: unique solution of SC} in Appendix \ref{appendix: Preliminary Knowledge}). 
Therefore, $F_{\gamma}(x)$ in \eqref{eq: F gamma} can be evaluated using the unique optimal solutions by
\begin{align}
     F_{\gamma}(x) = L_F(\mu_F^*(x),y_F^*(x);x).
\end{align}

Similarly, we can evaluate $v(x) = L_g(\mu_g^*(x),y_g^*(x);x) $, where 
\begin{align}
    (\mu_g^*(x),y_g^*(x)) := \arg \max_{\mu \in \mathbb{R}^{d_x}_+} \min_{y \in \mathcal{Y}} \underbrace{g(x,y)+ \langle \mu, g^c(x,y) \rangle}_{=:L_g(\mu,y;x)}.
    \label{eq: y_g,mu_g}
\end{align}
The penalty reformulation $F_{\gamma}(x)$ can hardly be convex, as $-v(x)$ may be concave, even when $g(x,y) = g(y)$ and $g^c(x,y)=A^\top y - x$; see Lemma 4.24 in \citep{ruszczynski2006nonlinear}. Instead, in the subsequent lemma, we not only show that $v(x)$ is differentiable, but also provide a closed-form expression of $\nabla v(x)$. 
\begin{lemma}[Danskin-like theorem for $v(x)$] 
\label{lemma: derivative of v(x)}
Suppose that Assumptions \ref{ass: lipschitz}--\ref{ass: LICQ} hold, and let $B_g<\infty$ be a  constant such that $\|\mu_g^*(x)\|<B_g$ for all $x\in \mathcal{X}$. Then, it holds that

1. $y_g^*(x)$ and $\mu_g^*(x)$ defined in \eqref{eq: y_g,mu_g} are $L_g$-Lipschitz for some finite constant $L_g\geq 0 $.

2.    $v(x)$ defined in \eqref{eq: v x} is $l_{v,1}$-smooth where $l_{v,1} \leq (l_{g,1} +B_gl_{g^c,1})(1+L_g)+l_{g^c,0}L_g$ and
    \begin{align}
    \nabla v(x) = \nabla_x g(x, y_g^*(x))+ \langle \mu_g^*(x),\nabla_x g^c(x,y_g^*(x)) \rangle.
    \label{eq: dv}
    \end{align}
\end{lemma}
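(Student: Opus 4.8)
\textbf{Proof proposal for Lemma~\ref{lemma: derivative of v(x)}.}

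The plan is to treat the value function $v(x)$ through its saddle-point representation $v(x) = L_g(\mu_g^*(x), y_g^*(x); x)$ from \eqref{eq: y_g,mu_g} and to proceed in two stages: first establish Lipschitz continuity of the primal-dual solution map $x \mapsto (y_g^*(x), \mu_g^*(x))$, then use this to differentiate $v$ and identify $\nabla v(x)$ via a Danskin-type argument. For the first stage, I would fix $x_1, x_2 \in \mathcal{X}$ and compare the corresponding saddle points. The key structural facts are that, for each fixed $x$, $L_g(\mu, y; x)$ is $\alpha_g$-strongly convex in $y$ (Assumption~\ref{ass: convexity}, since $g$ is strongly convex and $\langle \mu, g^c\rangle$ is convex in $y$ for $\mu \geq 0$), and that the dual solution $\mu_g^*(x)$ is uniquely determined and uniformly bounded by $B_g$ (invoking Lemma~\ref{lemma: unique solution of SC} in the appendix, which uses LICQ, Assumption~\ref{ass: LICQ}). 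The standard route is to write the first-order optimality (KKT) conditions for the saddle point at $x_1$ and $x_2$ — stationarity in $y$ over $\mathcal{Y}$ (a variational inequality), and complementary slackness/dual feasibility in $\mu$ — subtract them, and exploit strong monotonicity of the $y$-block together with the LICQ-induced nondegeneracy of the active constraint gradients to bound $\|y_g^*(x_1) - y_g^*(x_2)\|$ and $\|\mu_g^*(x_1) - \mu_g^*(x_2)\|$ by $L_g \|x_1 - x_2\|$, using the joint Lipschitz continuity of $\nabla g$, $g^c$, $\nabla g^c$ from Assumption~\ref{ass: lipschitz}. This yields part~1.

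For part~2, I would apply Danskin's theorem in its saddle-point (min-max) form. Since for each fixed $x$ the function $L_g(\cdot, \cdot; x)$ has a unique saddle point, and since $L_g$ is continuously differentiable in $x$ with the partial gradient $\nabla_x L_g(\mu, y; x) = \nabla_x g(x,y) + \langle \mu, \nabla_x g^c(x,y)\rangle$ jointly continuous, the envelope theorem gives $\nabla v(x) = \nabla_x L_g(\mu_g^*(x), y_g^*(x); x)$, which is exactly \eqref{eq: dv}. Concretely, one can derive this directly: using the saddle-point inequalities $L_g(\mu, y_g^*(x_1); x_1) \leq v(x_1) \leq L_g(\mu_g^*(x_1), y; x_1)$ for all admissible $\mu, y$, one sandwiches $v(x_2) - v(x_1)$ between $L_g(\mu_g^*(x_1), y_g^*(x_2); x_2) - L_g(\mu_g^*(x_1), y_g^*(x_2); x_1)$-type differences and their counterparts with $x_1 \leftrightarrow x_2$, then uses a first-order Taylor expansion of $L_g$ in $x$ plus the Lipschitz bounds on the solution map from part~1 to show the difference quotient converges to $\nabla_x L_g(\mu_g^*(x), y_g^*(x); x)$. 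The smoothness constant $l_{v,1}$ then follows by writing $\nabla v(x_1) - \nabla v(x_2)$ explicitly from \eqref{eq: dv}, adding and subtracting terms, and bounding each piece: the $\nabla_x g$ difference contributes $l_{g,1}(1 + L_g)\|x_1 - x_2\|$ via Lipschitzness of $\nabla g$ composed with the $L_g$-Lipschitz map $y_g^*$; the $\langle \mu_g^*, \nabla_x g^c\rangle$ term splits into a piece controlled by $\|\mu_g^*(x_1) - \mu_g^*(x_2)\| \leq L_g\|x_1 - x_2\|$ times $l_{g^c,0}$ (bound on $\nabla_x g^c$, i.e.\ Lipschitzness of $g^c$) and a piece controlled by $B_g \, l_{g^c,1}(1 + L_g)\|x_1 - x_2\|$ (Lipschitzness of $\nabla g^c$), which assembles into $l_{v,1} \leq (l_{g,1} + B_g l_{g^c,1})(1 + L_g) + l_{g^c,0} L_g$.

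The main obstacle I anticipate is rigorously establishing the Lipschitz continuity of the dual variable $\mu_g^*(x)$ in part~1. Unlike the primal variable, whose stability follows cleanly from $\alpha_g$-strong convexity, the dual map's regularity is delicate because the active set of $g^c$ can change with $x$, and near such changes the multiplier map need not even be differentiable. The LICQ condition (Assumption~\ref{ass: LICQ}) is precisely what rescues this: it guarantees the multipliers are unique and, combined with the boundedness $B_g$ and the implicit-function-theorem machinery on each fixed active set, yields a uniform Lipschitz bound — but stitching these local arguments into a single global constant $L_g$, handling the boundary of $\mathcal{Y}$, and controlling the constants carefully enough to get the stated clean bound on $l_{v,1}$ will require the careful case analysis that I expect is deferred to the appendix. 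A secondary subtlety is ensuring that the inner minimization over $y \in \mathcal{Y}$ (a constrained set, not all of $\mathbb{R}^{d_y}$) does not introduce extra terms in $\nabla v(x)$; this is fine because the projection onto $\mathcal{Y}$ does not depend on $x$, so the variational-inequality form of stationarity in $y$ is stable and the envelope computation goes through unchanged.
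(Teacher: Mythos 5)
Your overall architecture matches the paper's: first establish Lipschitz continuity of the primal-dual solution map $x\mapsto(y_g^*(x),\mu_g^*(x))$, then differentiate $v$ by an envelope/Danskin argument, then compute the smoothness constant by the same add-and-subtract triangle-inequality decomposition (your final bookkeeping for $l_{v,1}$ is essentially identical to the paper's). Where you diverge is in how the two hard ingredients are obtained. The paper does not prove either one from scratch: the Lipschitzness of $(y_g^*,\mu_g^*)$ is imported wholesale as Theorem 2.16 of \citep{ito2008lagrange} (restated as Lemma \ref{lemma: Lipschitzness of y mu}), and the differentiability of the value function with the formula \eqref{eq: dv} is obtained by verifying the three hypotheses of Theorem 4.24 in \citep{bonnans2013perturbation} (uniqueness of the saddle point from strong convexity, directional regularity via Robinson's constraint qualification, and Lipschitzness of the solution map) in an intermediate general lemma. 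You instead propose direct arguments: a KKT-subtraction/strong-monotonicity estimate for part 1 and a two-sided saddle-point sandwich for part 2. The sandwich argument for part 2 is sound and arguably more self-contained than the paper's citation. For part 1, however, your plan is where the real difficulty lives, and you correctly identify it yourself: strong monotonicity in $y$ gives the primal estimate cleanly, but the Lipschitz estimate for $\mu_g^*(x)$ across changes of the active set does not fall out of subtracting KKT systems --- near an active-set change the multiplier map is typically only piecewise smooth, and stitching the local implicit-function-theorem bounds into one global $L_g$ requires the full perturbation-analysis machinery. This is precisely the step the paper declines to prove and outsources to \citep{ito2008lagrange}, so your sketch is not wrong, but as written it is a plan rather than a proof at exactly the point where a proof would be nontrivial; to close it you would either need to carry out the piecewise/active-set analysis or, as the paper does, cite the known stability result.
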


The assumption of the existence of the upper bound for the Lagrange multiplier is a consequence of the LICQ condition \citep[Theorem 1]{wachsmuth2013licq}. This assumption is mild and traditional \citep{yao2024constrained}. Finding $\nabla v(x)$ is crucial for the design of a gradient-based method to solve $\min_{x\in \mathcal{X}} F_{\gamma}(x)$.
Leveraging Lemma \ref{lemma: derivative of v(x)}, the gradient $\nabla F_{\gamma}(x)$ can be obtained by the next lemma.

\begin{lemma}[Danskin-like theorem for $F_{\gamma}(x)$] 
\label{lemma: derivative of F}
Suppose that the conditions in Lemma \ref{lemma: derivative of v(x)} hold. 
Moreover, assume that $\gamma > \frac{l_{f,1}}{\alpha_g}$, and there exist $B_F\!<\!\infty$ such that $\|\mu_F^*(x)\|\!<\!B_F,\,\forall x\in \mathcal{X}$. Then, it holds that

1.  $y_F^*(x)$ and $\mu_F^*(x)$ defined in \eqref{eq: y_F,mu_F} are $L_F$-Lipschitz for some constant $L_F \geq 0$.

2. $F_{\gamma}(x)$ is $l_{F,1}$-smooth with $l_{F,1} \leq (l_{f,1} +\gamma l_{g,1}+ B_F l_{g^c,1})(1+L_F) + \gamma l_{v,1}+l_{f^c,0}L_F $, and
    \begin{align}
    \nabla F_{\gamma}(x) = \nabla_x f(x,y_F^*(x)) + \gamma\left(\nabla_x g(x, y_F^*(x)) -\nabla v(x) \right)+\langle \mu_F^*(x), \nabla_x g^c(x,y^*_F (x))
    \rangle.
    \label{eq: dF}
    \end{align}
\end{lemma}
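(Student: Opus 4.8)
\textbf{Proof proposal for Lemma \ref{lemma: derivative of F}.}

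The plan is to mirror the structure of the proof of Lemma \ref{lemma: derivative of v(x)}, since $F_\gamma(x)$ has exactly the same max-min structure as $v(x)$ but with the augmented inner objective $L_F(\mu,y;x) = f(x,y)+\gamma(g(x,y)-v(x))+\langle\mu,g^c(x,y)\rangle$ in place of $L_g$. The crucial preliminary observation is that, under the assumption $\gamma > l_{f,1}/\alpha_g$, the map $y\mapsto f(x,y)+\gamma g(x,y)$ is $(\gamma\alpha_g - l_{f,1})$-strongly convex in $y$ (the $l_{f,1}$-smoothness of $f$ gives $\nabla^2_{yy} f \succeq -l_{f,1}I$, and $g$ is $\alpha_g$-strongly convex), while $\langle\mu,g^c\rangle$ is convex in $y$ and $-\gamma v(x)$ does not depend on $y$. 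Hence $L_F(\mu,y;x)$ is strongly convex in $y$ with a modulus uniform in $(x,\mu)$, which is what makes the inner minimizer well-behaved; combined with Assumption \ref{ass: LICQ} this yields uniqueness of $(\mu_F^*(x),y_F^*(x))$ as already recorded in Lemma \ref{lemma: unique solution of SC}.

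First I would establish part 1, the Lipschitz continuity of $y_F^*(x)$ and $\mu_F^*(x)$. I would write the KKT/saddle-point optimality system for the inner max-min problem — stationarity in $y$ over $\mathcal{Y}$ (a variational inequality), primal feasibility $g^c(x,y_F^*)\le 0$, dual feasibility $\mu_F^*\ge 0$, and complementary slackness — and perform a standard sensitivity/perturbation analysis in $x$. Using the uniform strong convexity in $y$, the boundedness $\|\mu_F^*(x)\|<B_F$, the Lipschitz continuity of $\nabla f,\nabla g,g^c,\nabla g^c$ from Assumption \ref{ass: lipschitz}, the $l_{v,1}$-smoothness of $v$ from Lemma \ref{lemma: derivative of v(x)}, and the LICQ condition (which controls the conditioning of the active-constraint Jacobian and hence the stability of $\mu_F^*$), one obtains a Lipschitz constant $L_F$ for the pair $(y_F^*(x),\mu_F^*(x))$. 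This is essentially the same argument used to prove item 1 of Lemma \ref{lemma: derivative of v(x)}, just applied to $L_F$; the only new ingredient is carrying the $\nabla v(x)$ term along, whose Lipschitzness is already guaranteed by Lemma \ref{lemma: derivative of v(x)}.

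Next I would prove part 2 — differentiability, the gradient formula \eqref{eq: dF}, and the smoothness constant. For the gradient formula I would invoke a Danskin-type argument: since the saddle value $F_\gamma(x) = L_F(\mu_F^*(x),y_F^*(x);x)$ is attained at a unique saddle point which varies Lipschitz-continuously (part 1), and since $y_F^*(x)$ lies in the interior of optimality (the VI in $y$ holds) and $\mu_F^*(x)$ satisfies complementary slackness, the partial-derivative contributions through $y$ and through $\mu$ vanish, leaving only the explicit partial derivative $\nabla_x L_F(\mu_F^*(x),y_F^*(x);x)$. Computing that partial gives exactly $\nabla_x f(x,y_F^*) + \gamma(\nabla_x g(x,y_F^*) - \nabla v(x)) + \langle\mu_F^*,\nabla_x g^c(x,y_F^*)\rangle$, which is \eqref{eq: dF}; here the $-\gamma\nabla v(x)$ term comes from the $-\gamma v(x)$ piece of $L_F$ and uses Lemma \ref{lemma: derivative of v(x)} for its existence. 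For smoothness, I would then bound $\|\nabla F_\gamma(x_1) - \nabla F_\gamma(x_2)\|$ term by term using \eqref{eq: dF}: each term is a composition of an $l_{\cdot,1}$-Lipschitz gradient (or the $l_{v,1}$-smooth $\nabla v$) with the $L_F$-Lipschitz map $x\mapsto (x,y_F^*(x))$, plus the $\mu_F^*$-dependence which is $L_F$-Lipschitz and bounded by $B_F$; collecting terms yields $l_{F,1} \le (l_{f,1}+\gamma l_{g,1}+B_F l_{g^c,1})(1+L_F) + \gamma l_{v,1} + l_{f^c,0}L_F$ as claimed.

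The main obstacle I anticipate is part 1, specifically proving the Lipschitz continuity of the dual variable $\mu_F^*(x)$: strong convexity in $y$ pins down $y_F^*(x)$ cleanly, but the multiplier is only controlled through LICQ, so one must carefully argue that the active set is locally stable (or use an argument robust to active-set changes, e.g. via the full KKT map and a bound on the inverse of the augmented system under LICQ) and that the Jacobian of the active constraints stays uniformly nonsingular. Handling the coupling through $\nabla v(x)$ — which itself depends on the lower-level multiplier $\mu_g^*(x)$ — adds a layer, but since Lemma \ref{lemma: derivative of v(x)} already delivers $\nabla v$ as a Lipschitz, well-defined object, it enters merely as another Lipschitz term rather than as a genuinely new difficulty. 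Everything downstream (the Danskin computation and the term-by-term smoothness bound) is then routine.
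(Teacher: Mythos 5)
Your proposal is correct and follows essentially the same route as the paper: establish that $L_F(\mu,y;x)$ is $(\gamma\alpha_g-l_{f,1})$-strongly convex in $y$, deduce uniqueness and Lipschitz continuity of $(y_F^*(x),\mu_F^*(x))$, apply an envelope/Danskin argument to obtain \eqref{eq: dF}, and bound $\|\nabla F_\gamma(x_1)-\nabla F_\gamma(x_2)\|$ term by term. The only difference is presentational: where you sketch a hands-on KKT sensitivity analysis under LICQ, the paper delegates those steps to cited perturbation results (Theorem 2.16 of Ito--Kunisch for the Lipschitzness of the primal-dual pair and Theorem 4.24 of Bonnans--Shapiro for differentiability of the value function, packaged as Lemmas \ref{lemma: Lipschitzness of y mu} and \ref{lemma: value function derivative general}).
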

The proof of Lemmas \ref{lemma: derivative of v(x)} and \ref{lemma: derivative of F} is provided in Appendix \ref{appendix: Proof of Differentiability of Value Functions}. 
Similar to \citep{shen2023penalty}, the Danskin-like theorems in Lemmas \ref{lemma: derivative of v(x)} and \ref{lemma: derivative of F} rely on the Lipschitzness of the solutions in \eqref{eq: y_F,mu_F} and \eqref{eq: y_g,mu_g}. Different from BLO without CCs \citep{shen2023penalty}, the results here also hinge on the Lagrange multipliers $\mu_g^*(x)$ and $\mu_F^*(x)$.

\section{Main Results}
\label{sec: main results}

We will first introduce an algorithm tailored for \textbf{B}i\textbf{L}evel \textbf{O}ptimization problems with inequality \textbf{C}oupled \textbf{C}onstraints, and present its convergence analysis. In Section \ref{sec: BLO with coupled LL inequality constraint}, we will propose a primal-dual solver for the inner max-min problems and characterize the overall convergence.

\subsection{BLOCC algorithm} \label{sec: outer loop algorithm}

As $F_\gamma(x)$ features differentiability and smoothness, we can apply a projected gradient descent (PGD)-based method to solve $\min_{x\in\mathcal{X}}F_\gamma(x)$. At iteration $t$, update 
\begin{equation}\label{eq.xupdate}
x_{t+1} = \operatorname{Proj}_{\mathcal{X}}(x_t - \eta g_{F,t})
\end{equation}
where $\eta$ is the stepsize and $g_{F,t}$ is an estimate of $\nabla F_{\gamma}(x_t)$.

In the previous section, we have obtained the closed-form expressions of $\nabla v(x)$ in \eqref{eq: dv} and $\nabla F_{\gamma}(x)$ in \eqref{eq: dF}.
Evaluating the closed-form expression requires finding $(y_{g}^*(x_t),\mu_{g}^*(x_t))$ and $(y_{F}^*(x_t),\mu_{F}^*(x_t))$, the solutions to the max-min  problem $L_g(\mu,y;x)$ in \eqref{eq: y_g,mu_g} and $L_F(\mu,y;x)$ in \eqref{eq: y_F,mu_F} respectively.

Given $x_t\in \mathcal{X}$, 
we can use some minmax optimization solvers with $T_g$ iterations on \eqref{eq: y_g,mu_g} and  $T_F$ iterations on \eqref{eq: y_F,mu_F} to find an $\epsilon_g$-solution $(y_{g,t}^{T_g},\mu_{g,t}^{T_g})$ and an $\epsilon_F$-solution $(y_{F,t}^{T_F},\mu_{F,t}^{T_F})$ satisfying  
\begin{align}
     \label{eq: squared distance estimation error}
     \|(y_{g,t}^{T_g},\mu_{g,t}^{T_g})-(y_{g}^*(x_t),\mu_{g}^*(x_t))\|^2 = {\cal O}(\epsilon_g) ;~~~ \|(y_{F,t}^{T_F},\mu_{F,t}^{T_F}) - (y_{F}^*(x_t),\mu_{F}^*(x_t))\|^2 = {\cal O}(\epsilon_F)
\end{align}
for target estimation accuracy $\epsilon_g,\epsilon_F>0$. Such an effective minmax optimization solver will be introduced in the following Section \ref{sec: BLO with coupled LL inequality constraint}. In this way, $\nabla v(x_t)$ in \eqref{eq: dF} can be estimated as
\begin{align}
    g_{v,t} = \nabla_x g(x_t,y_{g,t}^{T_g}) + \langle\mu_{g,t}^{T_g},\nabla_x g^c(x_t,y_{g,t}^{T_g})\rangle.
\end{align}
Leveraging $g_{v,t}$, the gradient $\nabla F_{\gamma}(x)$ can be estimated via \eqref{eq: dv} as
    \begin{align}
        g_{F,t} &= \nabla_x f(x_t,y_{F,t}^{T_F}) +\gamma \left(\nabla_x g(x_t,y_{F,t}^{T_F})-g_{v,t} \right) + \langle \mu_F^{T_F}, \nabla_x g^c(x,y^{T_F}_{F,t})
    \rangle.
    \label{eq: gt}
    \end{align}

We summarize the methods for finding $\min F_{\gamma}(x)$ as in Algorithm \ref{alg: BLOCC}. In the following, we present the convergence result of it allowing estimation error $\epsilon_g,\epsilon_F>0$ from the MaxMin Solver.

\begin{theorem}
\label{thm: main result}
    Suppose that the assumptions in Lemma \ref{lemma: derivative of F} hold. Run Algorithm \ref{alg: BLOCC} with some effective inner \textsf{MaxMin} solver to find $(y_{g,t}^{T_g},\mu_{g,t}^{T_g})$ and $(y_{F,t}^{T_F},\mu_{F,t}^{T_F})$ respectively ${\cal O}(\epsilon_g)$ and ${\cal O}(\epsilon_F)$-optimal in squared distance as in \eqref{eq: squared distance estimation error}. 
    Set $\eta \leq \frac{1}{l_{F,1}}$ with some $l_{F,1}$ defined in Lemma \ref{lemma: derivative of F}.  
    It then holds that 
    % \AGM{Shall we stick with "where $G_{\eta}:=\frac{1}{\eta}\|(x_{t+1}-x_t)$"}
    \begin{align}    \label{eq: outer loop convergence}
        \frac{1}{T} \sum_{t=0}^{T-1} \| G_\eta(x_t) \|^2: = \frac{1}{T\eta^2} \sum_{t=0}^{T-1} \|(x_{t+1}-x_t) \|^2 =&  {\cal O}(\gamma T^{-1} + \gamma^2 \epsilon_F+ \gamma^2 \epsilon_g).
    \end{align}
\end{theorem}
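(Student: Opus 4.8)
The plan is to run a standard descent-lemma argument for projected gradient descent on the smooth function $F_\gamma$, but carefully tracking the inexactness of the gradient estimate $g_{F,t}$ that comes from the two inner \textsf{MaxMin} solvers. First I would invoke the $l_{F,1}$-smoothness of $F_\gamma$ from Lemma \ref{lemma: derivative of F} together with the projection step \eqref{eq.xupdate}. Writing $G_\eta(x_t) := (x_t - x_{t+1})/\eta$ for the generalized gradient, the nonexpansiveness of $\operatorname{Proj}_{\mathcal{X}}$ and the three-point inequality give a descent inequality of the familiar form
\begin{align}
F_\gamma(x_{t+1}) \le F_\gamma(x_t) - \eta\Big(1 - \tfrac{\eta l_{F,1}}{2}\Big)\|G_\eta(x_t)\|^2 + \langle \nabla F_\gamma(x_t) - g_{F,t},\, x_t - x_{t+1}\rangle + (\text{l.o.t.}),
\end{align}
and then Young's inequality on the error cross-term, absorbing $\|x_t-x_{t+1}\|^2 = \eta^2\|G_\eta(x_t)\|^2$ into the descent term using $\eta \le 1/l_{F,1}$, yields something like $F_\gamma(x_{t+1}) \le F_\gamma(x_t) - \tfrac{\eta}{4}\|G_\eta(x_t)\|^2 + c\,\eta\,\|\nabla F_\gamma(x_t) - g_{F,t}\|^2$.

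The main obstacle — and the part deserving the most care — is bounding the gradient-estimation error $\|\nabla F_\gamma(x_t) - g_{F,t}\|^2$ in terms of $\epsilon_g$ and $\epsilon_F$, and in particular getting the $\gamma$-dependence right. I would compare \eqref{eq: gt} with the closed form \eqref{eq: dF}, splitting the error into (i) the error from using $(y_{F,t}^{T_F},\mu_{F,t}^{T_F})$ in place of $(y_F^*(x_t),\mu_F^*(x_t))$ in the terms $\nabla_x f$, $\gamma\nabla_x g$, and $\langle\mu,\nabla_x g^c\rangle$, and (ii) the error $\gamma\|g_{v,t} - \nabla v(x_t)\|$ from the inexact evaluation of $\nabla v$. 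Using the joint Lipschitz continuity of $\nabla f$, $\nabla g$, $g^c$, $\nabla g^c$ from Assumption \ref{ass: lipschitz} and the boundedness of the multipliers ($B_F$, $B_g$), term (i) is $\mathcal{O}((1+\gamma)^2\epsilon_F)$ and term (ii) reduces to analyzing $\|g_{v,t} - \nabla v(x_t)\|$, which by the same Lipschitz bounds applied to \eqref{eq: dv} is $\mathcal{O}(\epsilon_g)$; multiplying by the prefactor $\gamma$ in \eqref{eq: gt} gives $\mathcal{O}(\gamma^2\epsilon_g)$. Collecting, $\|\nabla F_\gamma(x_t) - g_{F,t}\|^2 = \mathcal{O}(\gamma^2\epsilon_F + \gamma^2\epsilon_g)$, where I am implicitly using $\gamma > l_{f,1}/\alpha_g \ge$ (a constant), so $(1+\gamma)^2 = \mathcal{O}(\gamma^2)$.

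Finally I would telescope the per-iteration descent inequality from $t=0$ to $T-1$:
\begin{align}
\frac{\eta}{4}\sum_{t=0}^{T-1}\|G_\eta(x_t)\|^2 \le F_\gamma(x_0) - \inf_{x\in\mathcal{X}}F_\gamma(x) + c\,\eta\,T\cdot\mathcal{O}(\gamma^2\epsilon_F + \gamma^2\epsilon_g).
\end{align}
Dividing by $\eta T/4$ gives $\frac{1}{T}\sum_{t=0}^{T-1}\|G_\eta(x_t)\|^2 = \mathcal{O}\big(\eta^{-1}T^{-1}(F_\gamma(x_0)-\inf F_\gamma) + \gamma^2\epsilon_F + \gamma^2\epsilon_g\big)$. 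To recover the stated $\mathcal{O}(\gamma T^{-1} + \gamma^2\epsilon_F + \gamma^2\epsilon_g)$, I would either choose $\eta = \Theta(1/l_{F,1})$ and argue $l_{F,1} = \mathcal{O}(\gamma)$ from the bound $l_{F,1}\le (l_{f,1}+\gamma l_{g,1}+B_F l_{g^c,1})(1+L_F)+\gamma l_{v,1}+l_{f^c,0}L_F$ in Lemma \ref{lemma: derivative of F} (treating $L_F$, $B_F$, $l_{v,1}$ as $\gamma$-independent or at worst mildly growing), and likewise observe $F_\gamma(x_0) - \inf_x F_\gamma(x) = \mathcal{O}(\gamma)$ since $F_\gamma$ differs from a $\gamma$-independent quantity by the penalty term $\gamma(g - v)$ which is uniformly bounded on the compact-ish domain. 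One subtlety to flag is whether $L_F$ and $B_F$ genuinely stay bounded as $\gamma\to\infty$; I would either cite the earlier lemmas as already having established this uniformly, or add a remark that the constants hidden in $\tilde{\mathcal{O}}$ may carry logarithmic or mild polynomial $\gamma$-dependence, which is consistent with the $\tilde{\mathcal{O}}$ notation used in the contributions.
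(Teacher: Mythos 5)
Your proposal follows essentially the same route as the paper's proof: define the gradient bias $b(x_t)=\nabla F_\gamma(x_t)-g_{F,t}$, bound it term-by-term via the Lipschitz constants and multiplier bounds to get $\|b(x_t)\|^2={\cal O}(\gamma^2\epsilon_F+\gamma^2\epsilon_g)$, run the inexact projected-gradient descent lemma with Young's inequality to absorb the cross-term, telescope, and use $l_{F,1}={\cal O}(\gamma)$ so that $\eta^{-1}={\cal O}(\gamma)$ yields the $\gamma T^{-1}$ term. One small point to tidy: you state $F_\gamma(x_0)-\inf F_\gamma={\cal O}(\gamma)$, which combined with $\eta^{-1}={\cal O}(\gamma)$ would give ${\cal O}(\gamma^2T^{-1})$; you need (and your own remark that the penalty term is uniformly bounded in fact gives) $F_\gamma(x_0)-F_\gamma(x_T)={\cal O}(1)$, which is what the paper implicitly uses.
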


The proof is available in Appendix \ref{appendix: proof of outer loop convergence}. Using the projected gradient $G_{\eta}=\eta^{-1}(x_{t+1}-x_t)$ as the convergence metric for constrained optimization problems is standard \citep{ghadimi2016mini}. The term ${\cal O}(\gamma^2 \epsilon_F+ \gamma^2 \epsilon_g)$ arises from estimation errors using the \textsf{MaxMin} Solver. 
Although the inner oracle can be any one that achieves \eqref{eq: squared distance estimation error}, we value the computational effectiveness and therefore present a particular solver Algorithm \ref{alg: Primal-Dual Gradient Method} in the following section.
 
\subsection{MaxMin Solver for the BLO with inequality CCs }
\label{sec: BLO with coupled LL inequality constraint}

\begin{figure}
\begin{minipage}{0.46\textwidth}
\begin{algorithm}[H]
\caption{Our main algorithm BLOCC}
\label{alg: BLOCC}
\begin{algorithmic}[1]
\State \textbf{inputs:} initial points $x_0$, $y_{g,0}$, $\mu_{g,0}$, $y_{F,0}$, $\mu_{F,0}$; stepsize $\eta$, $\eta_g$, $\eta_F$; counters  $T_g$, $T_F$. 
    \For{$t = 0, 1, \ldots, T$}
        \State $(y_{g,t}^{T_g}, \mu_{g,t}^{T_g}) = \textsf{MaxMin}(T_g,T_y^g)$.
            
        \State $(y_{F,t}^{T_F},\mu_{F,t}^{T_F}) =  \textsf{MaxMin}(T_F,T_y^F)$.
        
        \State update $x_{t+1}$ 
        via \eqref{eq.xupdate} with $g_{F,t}$ in \eqref{eq: gt}. 
    \EndFor
\State \textbf{outputs:} $(x_T,y_{F,T}^{T_F})$
\end{algorithmic}
\end{algorithm}
\vspace{1.03cm}
\end{minipage}
\vspace{-.2cm}
\begin{minipage}{0.52\textwidth}
\begin{algorithm}[H]
\caption{Subroutine on \textsf{MaxMin}$(T,T_y)$}
\label{alg: Primal-Dual Gradient Method}
\begin{algorithmic}[1]
\State \textbf{inputs:} initial points $y_0$, $\mu_0$; stepsizes $\eta_1,\eta_2$; counters $T,T_y$. \colorbox{blue!30}{Blue part} is the version \blue{with acceleration}; and \colorbox{red!30}{red part} is \red{without}. 
\For{$t = 0, \ldots, T-1$}
    \State \colorbox{blue!30}{update $\mu_{t+\frac{1}{2}}$ via \eqref{eq: mu t+0.5}} \textbf{or} \colorbox{red!30}{$\mu_{t+\frac{1}{2}}=\mu_t$} \label{step: mu intermediate update}
    \For{$t_y = 0, \ldots, T_y-1$}\label{step: y update begin}
        \State update $y_{t,t_y+1}$ via \eqref{eq: y inner loop update} \label{step: y update}
        \Comment{set $y_{t,0}=y_{t}$}
    \EndFor  \label{step: y update end}
    \State update $\mu_{t+1}$ via \eqref{eq.mu-update} \label{step: mu update} 
    \Comment{set $y_{t+1}=y_{t,T_y}$}
\EndFor
\State \textbf{outputs:} $(y_T,\mu_T)$
\end{algorithmic}
\end{algorithm}
  \end{minipage}
  % \vspace{-0.8cm}
\end{figure}

In this section, we specify the MaxMin solver in Algorithm \ref{alg: BLOCC}. 
By viewing $L_g(\mu,y;x)$ in \eqref{eq: y_g,mu_g} and $L_F(\mu,y;x)$ in \eqref{eq: y_F,mu_F} as $L(\mu,y)$ for fixed given $x\in\mathcal{X}$, we consider the following max-min problem 
\begin{align}
    \max_{\mu \in \mathbb{R}^{d_c}_+} \min_{y \in \mathcal{Y}} L(\mu,y) \label{eq: conv conc saddle point prob}
\end{align}
which is concave (linear) in $\mu$ and strongly convex in $y$.

We can evaluate the dual function of $L(\mu,y)$ defined below by finding $y_{\mu}^*(\mu)$. 
\begin{align}
    D(\mu) := \min_{y\in \mathcal{Y}}L(\mu,y) = L(\mu,y_\mu^*(\mu)) \quad \text{where} \quad y_\mu^*(\mu) := \arg\min_{y\in \mathcal{Y}} L(\mu,y). 
     \label{eq: D mu}
\end{align}
According to Danskin's theorem, we have $\nabla D(\mu) = \nabla_\mu L(\mu,y^*_\mu(\mu))$. Taking either $L_g(\mu,y;x)$ or $L_F(\mu,y;x)$ as $L(\mu,y)$ for given $x\in \mathcal{X}$, $D(\mu)$ defined as \eqref{eq: D mu} exhibits favorable properties namely smoothness and concavity, with details illustrated in Lemma \ref{lemma: smooth and conc of D} in Appendix \ref{appendix: proof of accelarated}. We can, therefore, apply accelerated gradient methods designed for smooth and convex functions such as \citep{nesterov2005smooth,beck2009fast}.  

At each iteration $t$, we first perform a momentum-based update step to update $\mu_{t+\frac{1}{2}}$ as
\begin{align}
    \mu_{t+\frac{1}{2}} = \mu_t + \frac{t-1}{t+2}(\mu_t-\mu_{t-1}),\quad \text{with}~ \mu_{-1} =\mu_0. \label{eq: mu t+0.5}
\end{align}
To evaluate $\nabla D(\mu_{t+\frac{1}{2}}) = \nabla_\mu L(\mu_{t+\frac{1}{2}},y^*_\mu(\mu_{t+\frac{1}{2}})$, with an arbitrary small target accuracy $\epsilon>0$, we can run $T_y = {\cal O} (\ln(\epsilon^{-1}))$ PGD steps on $L(\mu_{t+\frac{1}{2}},y)$ in $y$ via
\begin{align}
    y_{t,t_y+1} =  \operatorname{Proj}_{\mathcal{Y}} \left(y_{t,t_y} - \eta_1 \nabla_y L(\mu_{t+\frac{1}{2}},y_{t,t_y}) \right).
    \label{eq: y inner loop update}
\end{align}
Defining the output after $T_y$ iterations as $y_{t+1}$, since strongly convexity of $L(\mu, \cdot)$ ensures that PGD converges linearly \citep[Theorem 3.10]{bubeck2015convex}, it implies that $\| y_{t+1}-y_{\mu}^*(\mu_{t+\frac{1}{2}})\| ={\cal O}(\epsilon)$. We can conduct
\begin{align}\label{eq.mu-update}
    \mu_{t+1} = \operatorname{Proj}_{\mathbb{R}^{d_c}_+}(\mu_{t+\frac{1}{2}}+ \eta_2 \nabla_\mu L(\mu_{t+\frac{1}{2}}, y_{t+1})).
\end{align}
We summarize this oracle in Algorithm \ref{alg: Primal-Dual Gradient Method} based on an accelerated method \citep{nesterov2005smooth}. When we skip the momentum update, i.e. setting $\mu_{t+\frac{1}{2}}=\mu_t$, it is a simple PGD method on $D(\mu)$.

However, for a convex function $-D(\mu)$, the standard results only provide convergence of the function value, i.e. $\max_{\mu\in \mathbb{R}^{d_c}_+}D(\mu)-D(\mu_t)\stackrel{t\rightarrow \infty}{\longrightarrow} 0$. 
To establish the convergence of $\|\mu_{g,t}^{T_g}-\mu_g^*(x_t)\|$ and $\|\mu_{F,t}^{T_F}-\mu_F^*(x_t)\|$, we define the dual functions associated with inner problems  \eqref{eq: y_g,mu_g} and \eqref{eq: y_F,mu_F} as 
\begin{align}
&D_g(\mu)=\min_{y\in\mathcal{Y}} L_g(\mu,y;x) \quad  \text{ and } \quad D_F(\mu)=\min_{y\in\mathcal{Y}} L_F(\mu,y;x)
\end{align}
and make the following curvature assumption near the optimum. 

\begin{assumption}
\label{ass: local RSI}
\begin{subequations}
    There exist $\delta_g,\delta_F>0$ and $C_{\delta_g},C_{\delta_F}>0$ such that
    \begin{align}
        \langle  -\nabla D_g(\mu)+\nabla D_g(\mu_g^*(x)), \mu-\mu_g^*(x) \rangle \geq & C_{\delta_g}\| \mu-\mu_g^*(x) \|^2,\quad \forall \mu \in \mathcal{B}(\mu_g^*(x);\delta_g), \label{eq: RSI g} \\
        \langle  -\nabla D_F(\mu)+\nabla D_F(\mu_F^*(x)), \mu-\mu_F^*(x) \rangle \geq & C_{\delta_F}\| \mu-\mu_F^*(x) \|^2,\quad \forall \mu \in \mathcal{B}(\mu_F^*(x);\delta_F). \label{eq: RSI F}
    \end{align}
\end{subequations}
\end{assumption}
It is worth noting that $\langle  -\nabla D_g(\mu)+\nabla D_g(\mu_g^*(x)), \mu-\mu_g^*(x) \rangle\geq 0$ holds for all $ \mu$ due to concavity and in a neighborhood of the optimal, the equality only happens at the optimal due to the uniqueness of $\mu_g^*(x)$. The same argument applies to $D_F$ due to the concavity of the dual functions. Therefore, Assumption \ref{ass: local RSI} essentially asserts a positive lower bound on the curvature of the left-hand side term, which is mild as it only applies to the neighborhood of the optima $\mu_g^*(x)$ and $\mu_F^*(x)$. It is also weaker than the local strong concavity or global restricted secant inequality (RSI) conditions. 

By choosing Algorithm \ref{alg: Primal-Dual Gradient Method} with acceleration as the \textsf{MaxMin} solver, we provide the convergence analysis of Algorithm \ref{alg: BLOCC} next, the proof of which can be found in Appendix \ref{appendix: proof of accelarated}.

\begin{theorem}
\label{thm: PDGM}
Suppose that Assumptions \ref{ass: lipschitz}--\ref{ass: local RSI} and the conditions in Theorem \ref{thm: main result} hold.
Let $\gamma > \frac{l_{f,1}}{\alpha_g}$,  $\epsilon_g \leq \frac{C_{\delta_g}}{2}\delta_g$ and $\epsilon_F \leq \frac{C_{\delta_F}}{2}\delta_F$. If we choose Algorithm \ref{alg: Primal-Dual Gradient Method} with acceleration as the inner loop and input $T_g= {\cal O}(\epsilon_g^{-0.5}), T_F  = {\cal O}(\epsilon_F^{-0.5})$, and $T_y^g = {\cal O}(\ln(\epsilon_g^{-1})), T_y^F = {\cal O}(\ln(\epsilon_F^{-1}))$ with proper constant stepsizes in Remark \ref{rmk: stepsize 1}, then the iterates generated by the Algorithm \ref{alg: BLOCC} satisfy \eqref{eq: squared distance estimation error}:
\begin{align*}
     \|(y_{g,t}^{T_g},\mu_{g,t}^{T_g})-(y_{g}^*(x_t),\mu_{g}^*(x_t))\|^2 = {\cal O}(\epsilon_g) ;~~~ \|(y_{F,t}^{T_F},\mu_{F,t}^{T_F}) - (y_{F}^*(x_t),\mu_{F}^*(x_t))\|^2 = {\cal O}(\epsilon_F).
\end{align*}
% \begin{align}
%     &\| \mu_{t,g}^{T_g}-\mu^*_g(x_t)\|^2 = {\cal O}(\epsilon_g) \quad \text{and} \quad \| y_{t,g}^{T_g}-y^*_g(x_t)\|^2 = {\cal O}(\epsilon_g)\nonumber\\
%     &\| \mu_{t,F}^{T_F}-\mu^*_F(x_t)\|^2 = {\cal O}(\epsilon_F) \quad \text{and} \quad \| y_{t,F}^{T_F}-y^*_F(x_t)\|^2 = {\cal O}(\epsilon_F).\nonumber
% \end{align}
\end{theorem}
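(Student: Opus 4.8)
\textbf{Proof proposal for Theorem \ref{thm: PDGM}.}

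The plan is to analyze the MaxMin subroutine (Algorithm \ref{alg: Primal-Dual Gradient Method}) as a two-level iteration: an outer accelerated (Nesterov) loop on the concave dual function $D(\mu)$ (instantiated as $D_g$ or $D_F$), and an inner projected-gradient loop on the strongly convex inner minimization in $y$. Since the argument is identical for $D_g$ and $D_F$ up to constants, I would present it once for a generic $D(\mu)$ that is $l_{D,1}$-smooth and concave (Lemma \ref{lemma: smooth and conc of D}), whose unique maximizer $\mu^*$ has a bounded norm, and for which the local curvature inequality of Assumption \ref{ass: local RSI} holds on a ball $\mathcal{B}(\mu^*;\delta)$ with constant $C_\delta$. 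The target is $\|(y_T,\mu_T)-(y^*_\mu(\mu^*),\mu^*)\|^2 = \mathcal{O}(\epsilon)$; note that once $\|\mu_T - \mu^*\| = \mathcal{O}(\epsilon^{1/2})$ the $y$-part follows from Lipschitzness of $y_\mu^*(\cdot)$ (strong convexity gives $\|y_\mu^*(\mu)-y_\mu^*(\mu')\|\leq \frac{l_{g^c,0}}{\alpha_g}\|\mu-\mu'\|$ after accounting for the $\gamma$-scaling in $L_F$), so the crux is the bound on $\|\mu_T-\mu^*\|$.

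The key steps, in order, are: (i) Quantify the inexactness of the dual gradient: because the inner loop runs $T_y = \mathcal{O}(\ln(\epsilon^{-1}))$ PGD steps on a strongly convex, smooth objective, linear convergence (\citep[Theorem 3.10]{bubeck2015convex}) gives $\|y_{t+1}-y_\mu^*(\mu_{t+1/2})\| = \mathcal{O}(\epsilon)$, hence by Lipschitzness of $\nabla_\mu L$ the computed $\nabla_\mu L(\mu_{t+1/2},y_{t+1})$ is an $\mathcal{O}(\epsilon)$-accurate estimate of $\nabla D(\mu_{t+1/2})$; the per-step bias is small enough that it does not destroy the accelerated rate. (ii) Invoke the standard inexact-accelerated-gradient analysis for the smooth concave function $-D$: with bias $\mathcal{O}(\epsilon)$ at each of $T$ steps and $T = \mathcal{O}(\epsilon^{-1/2})$, one obtains $D(\mu^*)-D(\mu_T) = \mathcal{O}(l_{D,1}\|\mu_0-\mu^*\|^2/T^2) + (\text{error from bias}) = \mathcal{O}(\epsilon)$, where I must check (using e.g. the Devolder–Glineur–Nesterov inexact-oracle framework, or a direct telescoping of the estimate-sequence inequality) that the accumulated bias contributes only $\mathcal{O}(\epsilon)$ rather than $\mathcal{O}(\epsilon^{1/2})$. (iii) Convert the function-value gap into an iterate-distance bound: this is exactly where Assumption \ref{ass: local RSI} enters. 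First argue that $\mu_T$ eventually lies in $\mathcal{B}(\mu^*;\delta)$ — the function-value gap being $\mathcal{O}(\epsilon)$ and the hypotheses $\epsilon_g \leq \frac{C_{\delta_g}}{2}\delta_g$, $\epsilon_F\leq \frac{C_{\delta_F}}{2}\delta_F$ are calibrated so that the sublevel set sits inside the ball. Then, on that ball, the curvature inequality $\langle -\nabla D(\mu)+\nabla D(\mu^*),\mu-\mu^*\rangle \geq C_\delta\|\mu-\mu^*\|^2$, together with $\nabla D(\mu^*)$ being a valid optimality certificate (the projected stationarity of $\mu^*$ over $\mathbb{R}^{d_c}_+$), yields $D(\mu^*)-D(\mu) \geq \frac{C_\delta}{2}\|\mu-\mu^*\|^2$ — a quadratic-growth / error-bound consequence of the restricted-secant-type condition. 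Combining with (ii) gives $\|\mu_T-\mu^*\|^2 = \mathcal{O}(\epsilon/C_\delta) = \mathcal{O}(\epsilon)$. (iv) Plug $\epsilon=\epsilon_g$ (resp. $\epsilon_F$) and the stated $T_g,T_F,T_y^g,T_y^F$ to conclude \eqref{eq: squared distance estimation error}; the constants come out of Remark \ref{rmk: stepsize 1}.

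The main obstacle I expect is step (ii)–(iii) coupling: carefully propagating the inexactness of the dual gradient through the accelerated scheme so that the final function-value gap is genuinely $\mathcal{O}(\epsilon)$ and not merely $\mathcal{O}(\sqrt{\epsilon})$ — accelerated methods are notoriously sensitive to gradient errors, and a naive bound would force $T_y$ or $T$ to scale worse than claimed. The fix is to note that the inner loop's linear convergence makes the bias controllable at logarithmic cost, so choosing $T_y = \mathcal{O}(\ln(\epsilon^{-1}))$ suffices to drive the per-iteration oracle error to $\mathcal{O}(\epsilon)$ (or even $\mathcal{O}(\epsilon^{3/2})$ if needed), which is exactly the regime where inexact acceleration retains its $T^{-2}$ rate; then one must verify that the error-bound constant from Assumption \ref{ass: local RSI} enters only multiplicatively and that $\mu_T$ provably enters the neighborhood in finitely many steps — both handled by the calibration $\epsilon_g\leq\frac{C_{\delta_g}}{2}\delta_g$, $\epsilon_F\leq\frac{C_{\delta_F}}{2}\delta_F$. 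A secondary technical point is making the $\gamma$-dependence of all constants explicit for $L_F$ (its $y$-strong-convexity modulus is $\gamma\alpha_g - l_{f,1}>0$, its smoothness and the dual smoothness scale with $\gamma$), which must be tracked so the result dovetails with Theorem \ref{thm: main result}'s $\gamma$-dependent rates.
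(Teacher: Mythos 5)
Your proposal is correct and follows essentially the same route as the paper's proof: linear convergence of the inner PGD loop to obtain an ${\cal O}(\epsilon^{1.5})$-accurate dual gradient, the inexact accelerated-gradient analysis of Devolder et al.\ to get $D(\mu^*)-D(\mu_T)={\cal O}(\epsilon)$ in $T={\cal O}(\epsilon^{-0.5})$ iterations, and conversion of the function-value gap to a quadratic-growth bound $D(\mu^*)-D(\mu)\geq \frac{C_\delta}{2}\|\mu-\mu^*\|^2$ via Assumption \ref{ass: local RSI} combined with the stationarity condition over $\mathbb{R}^{d_c}_+$ (which the paper handles explicitly through the Lagrange multiplier $\lambda$ and complementary slackness $\lambda^\top\mu^*=0$). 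The delicate points you flag — the bias propagation through the accelerated scheme and the role of $\nabla D(\mu^*)\neq 0$ at the boundary of the nonnegative orthant — are exactly the ones the paper's proof addresses.
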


For solving an $\epsilon$-approximation problem of BLO defined in \eqref{eq: epsilon app prob}, we solve $\min_{x\in \mathcal{X}}F_{\gamma}(x)$ with $\gamma = {\cal O}(\epsilon^{-0.5})$ according to Theorem \ref{thm: lagrangian reformulation of penalty}. In this way, to achieve $\frac{1}{T} \sum_{t=0}^{T-1} \| G_\eta(x_t) \|^2= {\cal O}(\gamma T^{-1} +\gamma^2 \epsilon_F + \gamma^2 \epsilon_g) \leq \epsilon$ by \eqref{eq: outer loop convergence}, we need $\epsilon_g,\epsilon_F={\cal O}(\epsilon^2)$, i.e. complexity $\tilde {\cal O}(\epsilon^{-1})$ for the \textsf{MaxMin} solver in Algorithm \ref{alg: Primal-Dual Gradient Method}, and $T={\cal O}(\gamma \epsilon^{-1})={\cal O}( \epsilon^{-1.5})$ for the number of iteration in BLOCC (Algorithm \ref{alg: BLOCC}). Therefore, the overall complexity is $\tilde{\cal O}(\epsilon^{-2.5})$, where $\tilde {\cal O}$ omits the $\ln$ terms.

\subsection{Special case of the MaxMin Solver: $g^c(x, y)$ being affine in $y$ and $\mathcal{Y}=\mathbb{R}^{d_y}$}
\label{sec: BLO with coupled LL inequality constraint affine in y}
In this section, we investigate a special case of BLO with CCs where Assumption \ref{ass: local RSI} automatically holds.
Specifically, we focus on the case where $g^c$ is affine in $y$ and $\mathcal{Y}=\mathbb{R}^{d_y}$, i.e. 
\begin{align}\label{eq:affine-gc}
    g^c(x,y)=g^c_1(x)^\top y -g^c_2(x).
\end{align}
In this case, fixing $x$, taking $L_g(\mu,y;x)$ in \eqref{eq: y_g,mu_g} and $L_F(\mu,y;x)$ as $L(\mu,y)$, \eqref{eq: D mu} gives
\begin{subequations}
    \begin{align}
    D_g(\mu)=&  \min_{y\in \mathbb{R}^{d_y}} - \langle g^c_2(x),\mu \rangle + \langle y, g^c_1(x) \mu \rangle + g(x,y)\quad \text{and}\\
    D_F(\mu)=&  \min_{y\in \mathbb{R}^{d_y}} - \langle g^c_2(x),\mu \rangle + \langle y, g^c_1(x) \mu \rangle +  f(x,y) + \gamma(g(x,y)-v(x)).
    \label{eq: D mu linear g F}
\end{align}
\end{subequations}
When $g^c_1(x)$ is of full column rank, both $D_g(\mu)$ and $D_F(\mu)$ are strongly concave according to Lemma \ref{lemma: D(mu)} in Appendix so that Assumption \ref{ass: local RSI} holds globally. 
Moreover, when applying PGD on $-D_g(\mu)$ and $-D_F(\mu)$, strongly convexity guarantees linear convergence (Theorem 3.10 in \citep{bubeck2015convex}). 
Therefore, even without acceleration, Algorithm \ref{alg: Primal-Dual Gradient Method} with $T_y$ sufficiently large performs PGD on $-D(\mu)$ and it converges linearly up to inner loop accuracy. Moreover, PGD on $L(\mu,y)$ in $y$ also converges linearly. This motivates us to implement a single-loop version ($T_y = 1$) of Algorithm \ref{alg: Primal-Dual Gradient Method}.

When both $y$ and $\mu$ are unconstrained, the analysis has been established in \citep{du2019linear}. However, as $\mu$ is constrained to $\mathbb{R}_+^{d_c}$ in the Lagrangian formulation for inequality constraints, the convergence analysis in \citep{du2019linear} is not applicable, and the extension is nontrivial due to the non-differentiability of the projection. 
We address this technical challenge by treating $\mathbb{R}^{d_c}_+$ as an inequality constraint and reformulating it as an unconstrained problem using Lagrange duality theory. This approach demonstrates that a single-loop $T_y=1$ update in Algorithm \ref{alg: Primal-Dual Gradient Method}, without acceleration, achieves linear convergence for the max-min problem \eqref{eq: conv conc saddle point prob}. The detailed proof is provided in Appendix \ref{appendix: proof of linear convergence}.

Thus, by selecting the single-loop version ($T_y=1$) of Algorithm \ref{alg: Primal-Dual Gradient Method} without acceleration as the \textsf{MaxMin} solver in Algorithm \ref{alg: BLOCC}, we establish the following theorem.

\begin{theorem}[Inner linear convergence]
\label{thm: linear convergence_PDGM}
    Consider BLO with $\mathcal{Y} = \mathbb{R}^{d_y}$ and  $g^c(x,y)$ defined in \eqref{eq:affine-gc}. Suppose Assumptions \ref{ass: lipschitz}--\ref{ass: LICQ} and the conditions in Theorem \ref{thm: main result} hold. Suppose for any $x\in \mathcal{X}$, there exist constants $s_{\min}$ and $s_{\max}$ such that $0 < s_{\min} \leq \sigma_{\min}(g^c_1(x)) \leq \sigma_{\max}(g^c_1(x)) \leq s_{\max} < \infty$. Let $\gamma > \frac{l_{f,1}}{\alpha_g}$. If we choose Algorithm \ref{alg: Primal-Dual Gradient Method} without acceleration as the inner loop and input $T_g= {\cal O}(\ln(\epsilon_g^{-1})), T_F  = {\cal O}(\ln(\epsilon_F^{-1}))$, and $T_y^g = T_y^F = 1$ with proper constant stepsizes in Remark \ref{rmk: stepsize 2}, then the iterates generated by Algorithm \ref{alg: BLOCC} satisfy \eqref{eq: squared distance estimation error}:
\begin{align*}
     \|(y_{g,t}^{T_g},\mu_{g,t}^{T_g})-(y_{g}^*(x_t),\mu_{g}^*(x_t))\|^2 = {\cal O}(\epsilon_g) ;~~~ \|(y_{F,t}^{T_F},\mu_{F,t}^{T_F}) - (y_{F}^*(x_t),\mu_{F}^*(x_t))\|^2 = {\cal O}(\epsilon_F).
\end{align*}
% \begin{align}
%     &\| \mu_{t,g}^{T_g}-\mu^*_g(x_t)\|^2 = {\cal O}(\epsilon_g) \quad \text{and} \quad \| y_{t,g}^{T_g}-y^*_g(x_t)\|^2 = {\cal O}(\epsilon_g)\nonumber\\
%     &\| \mu_{t,F}^{T_F}-\mu^*_F(x_t)\|^2 = {\cal O}(\epsilon_F) \quad \text{and} \quad \| y_{t,F}^{T_F}-y^*_F(x_t)\|^2 = {\cal O}(\epsilon_F).\nonumber
% \end{align}
\end{theorem}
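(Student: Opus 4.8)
The plan is to establish linear convergence of the single-loop ($T_y=1$) version of Algorithm \ref{alg: Primal-Dual Gradient Method} without acceleration on the max-min problem \eqref{eq: conv conc saddle point prob}, in both instances $L=L_g$ and $L=L_F$, and then feed this into the framework of Theorem \ref{thm: main result}. The key observation is that in the affine case \eqref{eq:affine-gc} with $\mathcal{Y}=\mathbb{R}^{d_y}$, the only remaining constraint is $\mu\in\mathbb{R}^{d_c}_+$; we handle the projection onto $\mathbb{R}^{d_c}_+$ — the source of non-differentiability that blocks a direct application of \citep{du2019linear} — by dualizing it. Concretely, I would write $\max_{\mu\ge 0}D(\mu)$ (with $D=D_g$ or $D=D_F$) as $\max_{\mu}\min_{\lambda\ge0}D(\mu)-\langle\lambda,\mu\rangle$, or equivalently introduce the KKT system of the saddle point and show that the single-loop primal-dual iteration is, up to a change of variables, a smooth fixed-point map whose Jacobian at the optimum is a contraction.

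The main steps, in order, are: (1) Fix $x\in\mathcal{X}$ and verify via Lemma \ref{lemma: D(mu)} (cited from the appendix) that under $0<s_{\min}\le\sigma_{\min}(g^c_1(x))\le\sigma_{\max}(g^c_1(x))\le s_{\max}<\infty$ together with $\alpha_g$-strong convexity of $g$ (resp.\ $\gamma\alpha_g-l_{f,1}>0$ from $\gamma>l_{f,1}/\alpha_g$ for the $L_F$ case), both $-D_g$ and $-D_F$ are $\kappa$-strongly convex and $\ell$-smooth with constants depending only on $s_{\min},s_{\max},\alpha_g,\gamma$ and the Lipschitz constants of Assumption \ref{ass: lipschitz} — uniformly in $x$. (2) Show that with $T_y=1$ and a single inexact-enough $y$-step, the map $(y_t,\mu_t)\mapsto(y_{t+1},\mu_{t+1})$ is a nonexpansive composition of a projected gradient step on a strongly convex objective; treating $\mathbb{R}^{d_c}_+$ as an inequality constraint and passing to the Lagrangian-reformulated unconstrained saddle system, apply the linear-rate analysis of primal-dual gradient methods for strongly-convex--strongly-concave saddle problems (the style of \citep{du2019linear}) to get $\|(y_{t+1},\mu_{t+1})-(y^*,\mu^*)\|\le\rho\|(y_t,\mu_t)-(y^*,\mu^*)\|$ with $\rho<1$ for suitable constant stepsizes $\eta_1,\eta_2$ (those of Remark \ref{rmk: stepsize 2}). (3) Unroll $T_g={\cal O}(\ln(\epsilon_g^{-1}))$ and $T_F={\cal O}(\ln(\epsilon_F^{-1}))$ iterations to obtain $\|(y_{g,t}^{T_g},\mu_{g,t}^{T_g})-(y_g^*(x_t),\mu_g^*(x_t))\|^2={\cal O}(\epsilon_g)$ and the analogous bound for $F$. (4) Because these are exactly the hypotheses \eqref{eq: squared distance estimation error} required by Theorem \ref{thm: main result}, and the remaining assumptions of Lemma \ref{lemma: derivative of F} are in force, the conclusion follows; note Assumption \ref{ass: local RSI} is not needed here since, as argued in the surrounding text, $D_g,D_F$ are globally strongly concave when $g^c_1(x)$ has full column rank, which is implied by $s_{\min}>0$.

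The hard part will be Step (2): making the dualization of the constraint $\mu\ge0$ rigorous and showing the resulting single-loop iteration still contracts. The subtlety is that after introducing multipliers for $\mu\ge0$ one gets a saddle problem that is strongly convex in $y$, strongly concave in $\mu$, but only linear in the new multiplier, so the naive Lyapunov argument does not immediately close; one must exploit complementary slackness and the fact that near the optimum the active set of $\mu\ge0$ stabilizes (or, alternatively, argue directly on the projected-gradient fixed-point map using that projection onto $\mathbb{R}^{d_c}_+$ is firmly nonexpansive and co-coercive, so composing it with the strongly-monotone gradient map of $-D$ yields a contraction). A second technical point is ensuring all constants — the contraction factor $\rho$, the smoothness/strong-convexity moduli, and the stepsize bounds — are uniform over $x\in\mathcal{X}$, which is what allows the per-iteration inner accuracy to translate into the clean ${\cal O}(\epsilon_g),{\cal O}(\epsilon_F)$ guarantees plugged into Theorem \ref{thm: main result}; this uniformity is inherited from the uniform spectral bounds $s_{\min},s_{\max}$ and the global Lipschitz constants of Assumption \ref{ass: lipschitz}.
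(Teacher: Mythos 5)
Your high-level strategy matches the paper's: establish that $-D_g$ and $-D_F$ are globally strongly convex and smooth from the uniform spectral bounds on $g^c_1(x)$ (the paper's Lemma \ref{lemma: D(mu)}, via the conjugate $h_2^*$), prove linear convergence of the single-loop $T_y=1$ iteration, unroll $T_g,T_F={\cal O}(\ln(\epsilon^{-1}))$ steps, and plug the resulting bounds into the hypotheses \eqref{eq: squared distance estimation error} of Theorem \ref{thm: main result}; you also correctly note that Assumption \ref{ass: local RSI} is superseded by global strong concavity here. The divergence is in your Step (2), which you yourself flag as the hard part and leave open between two routes. The paper does neither of the two routes you sketch in full. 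It does not need to dualize $\mu\ge0$ to obtain the contraction of $\|\mu_t-\mu^*\|$: that contraction comes directly from the standard linear rate of \emph{projected} gradient descent on the strongly concave $D(\mu)$ over the convex set $\mathbb{R}^{d_c}_+$ (Lemma \ref{lemma: bubeck2015convex}), plus a perturbation term for the inexact gradient $\nabla_\mu L(\mu_t,y_{t+1})$ in place of $\nabla D(\mu_t)$. The Lagrange multiplier $\lambda$ of the constraint $\mu\ge0$ enters only in bounding the step length $\|\mu_{t+1}-\mu_t\|$ (Lemma \ref{lemma: update of mu t+1 - mu t}), which is needed because the $y$-tracking error $\|y_t-\nabla h_2^*(-A\mu_t)\|$ is inflated by how far $\mu$ moves. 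Crucially, that bound carries a \emph{non-vanishing} residual $\|\lambda\|$ (since $\nabla D(\mu^*)=-\lambda\ne0$ when some constraints are inactive), so the paper does not obtain the clean one-step contraction $\|(y_{t+1},\mu_{t+1})-(y^*,\mu^*)\|\le\rho\|(y_t,\mu_t)-(y^*,\mu^*)\|$ that you posit; instead it builds the weighted Lyapunov function $P_t=\rho\|\mu_t-\mu^*\|+\|y_t-\nabla h_2^*(-A\mu_t)\|$ and absorbs the residual by taking $\rho=\epsilon^{-1}$ together with the $\epsilon$-dependent stepsize $\eta_2={\cal O}(\epsilon)$ of Remark \ref{rmk: stepsize 2}. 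So if you pursue your plan as written, you must either (i) sharpen the step-length bound using nonexpansiveness of the projection against the fixed-point identity $\mu^*=\operatorname{Proj}_{\mathbb{R}^{d_c}_+}(\mu^*+\eta_2\nabla D(\mu^*))$ to kill the $\|\lambda\|$ residual and get a genuine contraction, or (ii) adopt the paper's weighted-Lyapunov-with-scaled-stepsize device; as it stands, the central contraction claim of your Step (2) is asserted rather than proved, and your worry about the auxiliary multiplier being only linear is a symptom of the fact that the dualization route you lead with is not the mechanism that closes the argument.
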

The proof is available in Appendix \ref{appendix: proof of linear convergence}. 
Similar to the previous analysis, we choose $\gamma = {\cal O}(\epsilon^{-0.5})$ to solve the equivalent \eqref{eq: F gamma} to $\epsilon$-approximation problem of BLO \eqref{eq: epsilon app prob}. To achieve $\frac{1}{T} \sum_{t=0}^{T-1} \| G_\eta(x_t) \|^2 \leq \epsilon$, the number of iteration in BLOCC $T={\cal O}(\gamma \epsilon^{-1})={\cal O}(\epsilon^{-1.5})$ by \eqref{eq: outer loop convergence}. As the inner \textsf{MaxMin} solver convergences linearly, the overall complexity is $\tilde{\cal O}(\epsilon^{-1.5})$ where $\tilde {\cal O}$ omits the $\ln$ terms.
We summarized the overall iteration complexity of our BLOCC algorithm in different settings in Table \ref{tab: literature comparison}.

% \TC{\color{green}Move the calculation of overall $\tilde{\cal O}(\epsilon^{-1.5})$  complexity outside of the theorem. and calculate it explicitly.}

\section{Numerical Experiments}
\label{sec: experiments}

This section reports the results of numerical experiments for three different problems: a toy example used to validate our method, an SVM training application, and a network design problem in both synthetic and real-world transportation scenarios. In the last two experiments, we compare the proposed algorithm with two baselines, LV-HBA \citep{yao2024constrained} and GAM \citep{xu2023efficient}. The code is available at \href{https://github.com/Liuyuan999/Penalty_Based_Lagrangian_Bilevel}{https://github.com/Liuyuan999/Penalty\_Based\_Lagrangian\_Bilevel}.

\subsection{Toy example}\label{sec: experiments_toy}
\begin{wrapfigure}{r}{0.35\textwidth}
\vspace{-.3cm}
    \centering
    \includegraphics[width=0.34\textwidth]{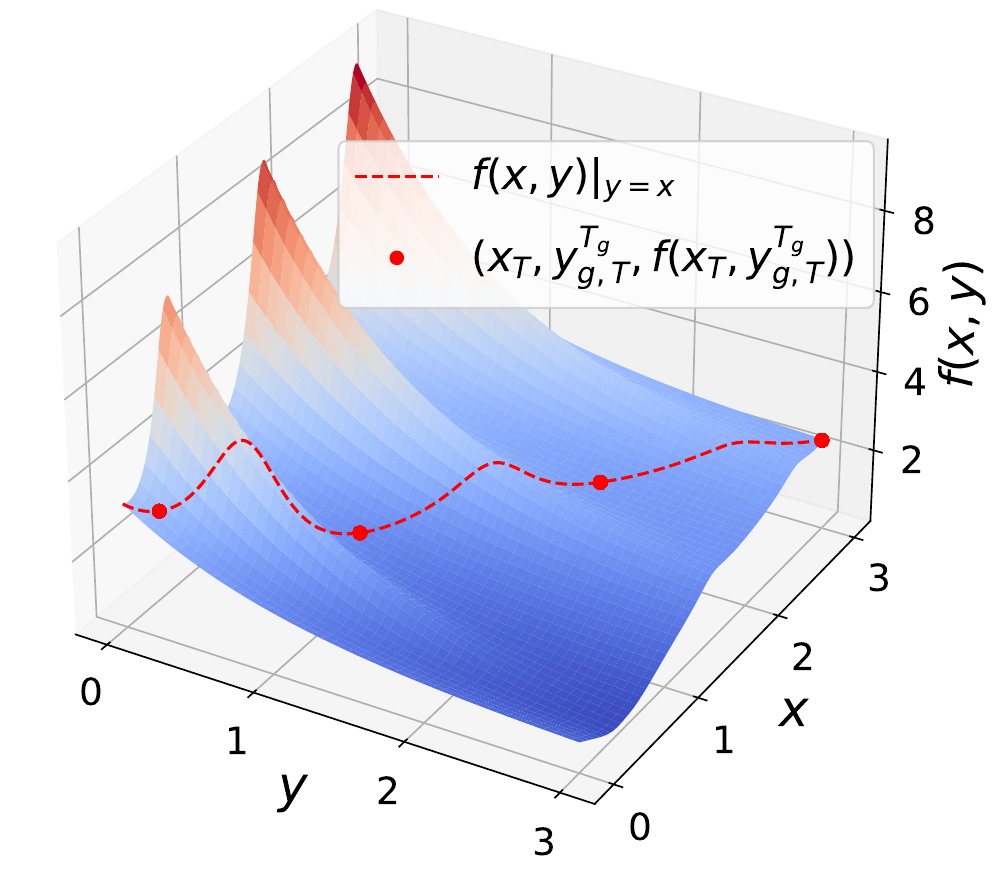}
    \caption{3-D plot of the upper-level objective $f(x,y)$ of the toy example, with the line $f(x,y)|_{y=x}$ shown in dashed red and the convergence points marked as red dots.\vspace{0.2cm}}
        \label{fig:toy_example}
    \vspace{-1.8cm}
\end{wrapfigure}

Consider the BLO problem with an inequality coupled constraint $\mathcal{Y} (x) := \{ y \in \mathcal{Y}: y - x \leq 0 \}$, given by
\begin{align}\label{eq: toy example}
    \min_{x \in [0,3]} \; & \; f(x,y^*_g (x) ) = \frac{e^{-y^*_g (x) +2}}{2+\cos (6x)} + \frac{1}{2} \ln ((4x-2)^2 + 1) \nonumber\\
    \text{with} \; & \; y^*_g (x) \in \text{arg}\min_{y\in \mathcal{Y} (x)} \; g(x,y) = (y - 2x)^2.
\end{align}
Problem \eqref{eq: toy example} satisfies all assumptions for Theorem \ref{thm: main result} and Theorem \ref{thm: linear convergence_PDGM}. The lower-level problem is strongly convex and $y^*_g (x) = x$. Therefore, the BLO problem with inequality constraint in \eqref{eq: toy example} reduces to $\min_{x \in [0,3]} f(x,y)|_{y=x}$. In Figure \ref{fig:toy_example}, we plot the dashed line as the intersected line of the surface $f(x,y)$ and the plane $f(x,y_g^*(x))$, and the red points as the converged points by running BLOCC with $\gamma=5$ and with 200 different initialization values. It can be seen that BLOCC consistently finds the local minima, verifying the effectiveness.

\begin{wrapfigure}{r}{0.55\textwidth}
\vspace{-0.3cm}
    \centering
    \fontsize{9}{9}\selectfont
\begin{tabular}{c||c|c}
\hline\hline
Methods & diabetes & fourclass  \\
\hline\hline
\centering {\bf BLOCC} & $\bf 0.767 \pm 0.039$ & $\bf 0.761 \pm 0.014$ \\
\textbf{(Ours)}& $(\bf 1.729 \pm 0.529)$ & $(\bf 1.922 \pm 0.108)$ \\
\hline
\multirow{2}{*}{\centering {LV-HBA}} & $0.765 \pm 0.039$ & $0.748 \pm 0.060$ \\
& $(2.899 \pm 1.378)$ & $(2.404 \pm 0.795)$ \\
\hline
\multirow{2}{*}{\centering {GAM}} & $0.721 \pm 0.047$ & $0.715 \pm 0.056$\\
&$(8.752 \pm 4.736)$ & $(13.481 \pm 0.970)$\\
\hline\hline
\end{tabular}
\captionof{table}{Numerical results on the training outcome of our BLOCC in comparison with LV-HBA \citep{yao2024constrained} and GAM \citep{xu2023efficient}. The first row represents accuracy mean $\pm$ standard deviation, and the second row between brackets represents the running time until the upper-level objective's update is smaller than 1e$^{-5}$.}  \label{tab:SVM output}
\vspace{-0.4cm}
\end{wrapfigure}

\subsection{Hyperparameter optimization for SVM}
\label{sec: experiments_svm}

We test the performance of our algorithm BLOCC with $\gamma=12$ when training a linear SVM model on the diabetes \citep{Dua:2017} and fourclass datasets \citep{ho1996building}. The model is trained via the BLO formulation \eqref{eq: problem 1} with inequality CCs; see the details in Appendix \ref{appendix:svm}. We compared performance with two baselines, LV-HBA \citep{yao2024constrained} and GAM \citep{xu2023efficient}.

Table \ref{tab:SVM output} shows that the model trained by our BLOCC algorithm outperforms that of GAM \citep{xu2023efficient} significantly and is of a similar level as that of LV-HBA \citep{yao2024constrained}. 
Looking into the test accuracy in Figure \ref{fig:hyperparam_opt}, our algorithm achieves more than 0.76 accuracy in the first 2 iterations, which is significantly better than the other ones.
For the upper-level objective, in the first few iterations, the loss decreases significantly under all algorithms, in which our BLOCC achieves the lowest results. 
Moreover, in the right plot of Figure \ref{fig:hyperparam_opt}, the lower-level optimum for LV-HBA was not attained until the very end. This indicates that the decrease of upper loss between 0-40 iterations in the middle figure may be due to the suboptimality of lower-level variables. For our BLOCC and GAM, the lower-level minimum is attained as lower-level objective $g\geq 0$ in this case.

\begin{figure}[t]
    \begin{minipage}{0.33\textwidth}
    \centering
    \includegraphics[height=3.5cm]{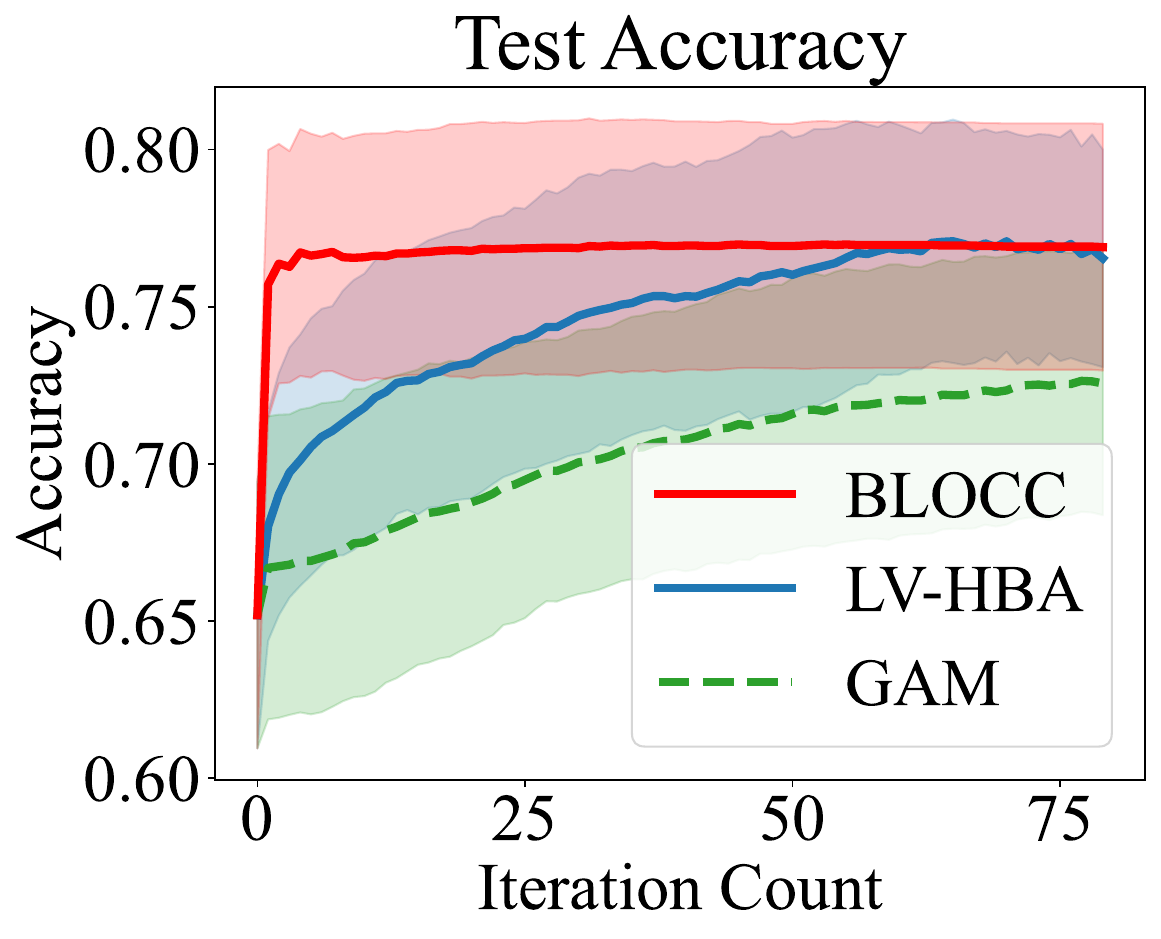}
    \end{minipage}%
    \hfill
    \begin{minipage}{0.33\textwidth}
    \centering
        \includegraphics[height=3.5cm]{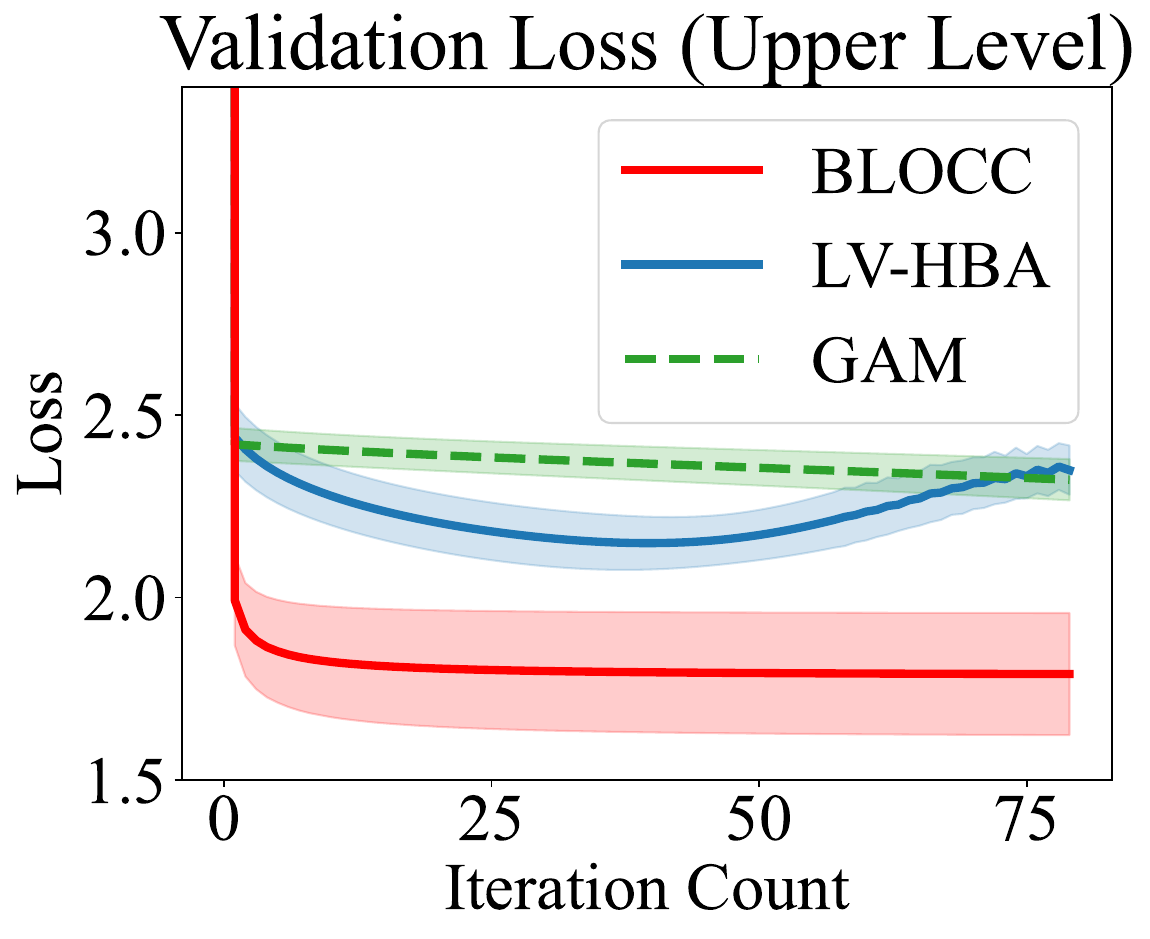}
    \end{minipage}%
    \hfill
    \begin{minipage}{0.33\textwidth}
    \centering
        \includegraphics[height=3.5cm]{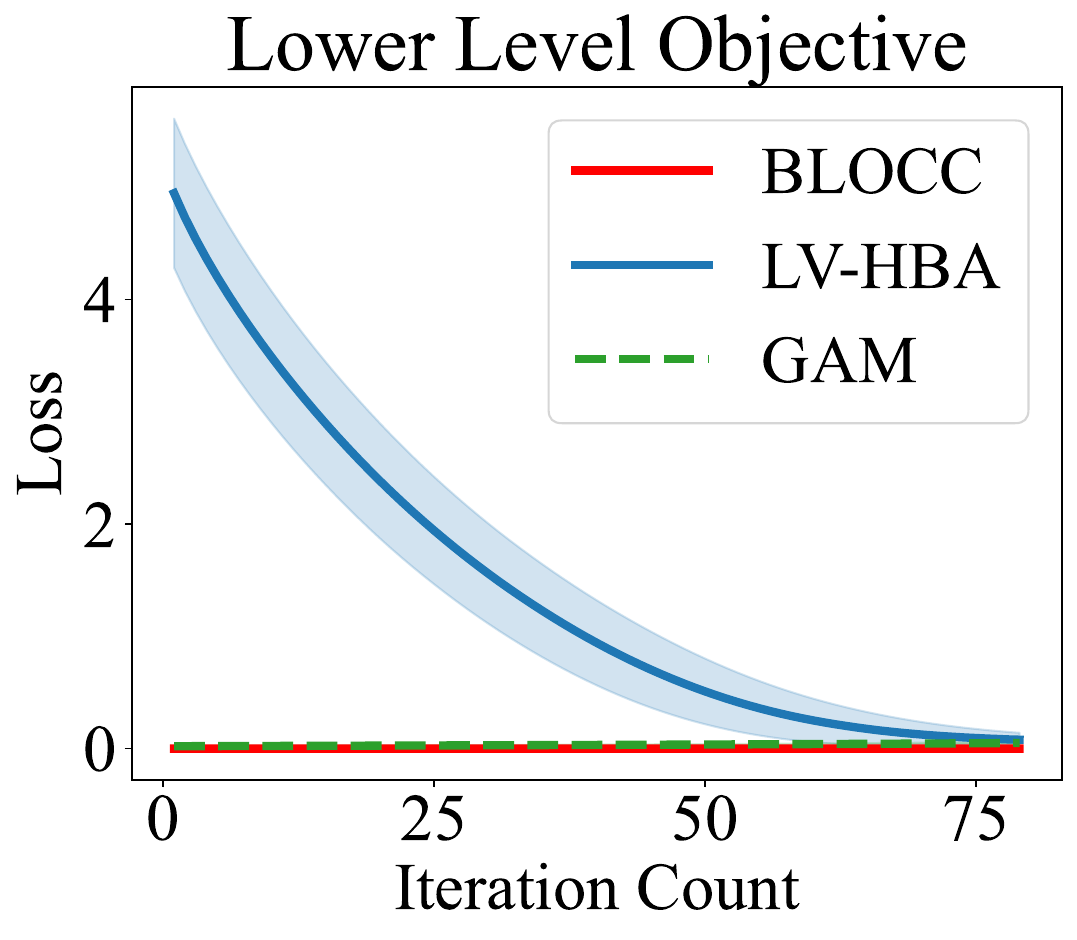}
    \end{minipage}
    \caption{Test accuracy (left), upper loss $f(x,y)$ (middle), and lower loss $g(x,y)$ (right) for the SVM on the diabetes dataset. The experiments are executed for 50 different random train-validation-test splits, with the bold line representing the mean, and the shaded regions being the standard deviation.}
        \label{fig:hyperparam_opt}
% \vspace{-0.2cm}
\end{figure}

\subsection{Transportation network design problem}\label{sec: experiments_tp}

\begin{table}[t]
\fontsize{9}{9}\selectfont
    \centering
    \tabcolsep=0.11cm
    \begin{tabular}{c||c|c|c|c|c|c}
    \hline\hline
         &  \multicolumn{2}{c|}{NN = 3, NV = 48} & \multicolumn{2}{c|}{NN = 9, NV = 2,262} & \multicolumn{2}{c}{NN = 26, NV = 49,216}\\
         & \multicolumn{2}{c|}{NC = 24, NZ = 126} 
         & \multicolumn{2}{c|}{NC = 678, NZ = 6,222} & \multicolumn{2}{c}{NC = 13,336, NZ = 129,256} \\
         Methods & Runtime (s) & UL utility & Runtime (s) & UL utility & Runtime (s) & UL utility \\ \hline\hline
         LV-HBA & $3.51e2$ & 1.53 & / & / & / & / \\ 
         BLOCC (Ours)-$\gamma=2$ & $2.02e1$ & 1.07 & $7.27 e2$ & 8.09 & $6.82e4$ & 98.40 \\
         BLOCC (Ours)-$\gamma=3$ & $2.00e1$ & 1.69 & $8.50 e2$ & 10.37 & $6.42 e4$ & 111.39 \\ 
         BLOCC (Ours)-$\gamma=4$ & $2.01e1$& 1.71 & $8.68 e2$ & 11.04 & $6.70 e4$ & 138.78    \\
    \hline\hline
    \end{tabular}
    \vspace{0.2cm}
    \caption{Results of the transportation experiment, both in terms of running time (Runtime) and convergenced upper-level objective value (UL utility, larger there better), with stepsize $\eta = 1.6e{-4}$. We use "/" for algorithms that cannot converge within 24 hours of execution. NN (Number of Nodes) is the number of stations in the network. Analogously, NV (Number of Variables), NC (Number of Constraints), and NNZ (Number of non-zero elements) are the number of optimization variables, constraints, and non-zero elements of the constraints matrix, respectively. 
    }
    \label{tab:network}
 \vspace{-0.5cm}
\end{table}

BLO is particularly relevant in transportation, where network planning must consider different time horizons and actors. Those problems are large-scale with a large number of upper- and lower-level variables and CCs, challenging the use of traditional BLO techniques. As a result, our final experiment considers a network design problem, where we act as an operator (whose utility is modeled as the upper-level objective) that considers the passengers' behavior, modeled in the lower-level. We considered three networks: two synthetic networks of 3 and 9 nodes, respectively, and a real-world network of 26 nodes in the city of Seville, Spain. Further details about the formulation and the experiment can be found in Appendix \ref{appendix:transportation}. 
 
In this experiment, we only compare our BLOCC with LV-HBA, as GAM and BVFSM relies on the hypergradient of lower-level and can not tackle the problem with lower-level domain constraint $\mathcal{Y}$. Table~\ref{tab:network} summarizes the experiment results.
We can see that LV-HBA failed to work efficiently, especially for large networks. This is mainly because the increased constraints render the projection step impracticable. Our BLOCC, in contrast, is much faster, and it successfully converges even with large real-world networks. We provided computational complexity analysis in Appendix \ref{appendix: Complexity Analysis} and we can conclude that our BLOCC is robust to large-scale problems.

\section{Conclusions and Future Work}
This paper proposed a novel primal-dual-assisted penalty reformulation for BLO problems with coupled lower-level constraints, and developed a new first-order method BLOCC to solve the resultant problem. The non-asymptotic convergence rate of our algorithm is $\tilde{\cal O}(\epsilon^{-2.5})$, tightening to $\tilde{\cal O}(\epsilon^{-1.5})$ when the lower-level constraints are affine in $y$ without any other constraints. Our method achieves the best-known convergence rate and is projection-free, making it more favorable for large-scale, high-dimensional, constrained BLO problems. Experiments on SVM model training and transportation network planning showcased the effectiveness of our algorithm. 
 
\bibliographystyle{plain}
\bibliography{main_preprint}

\newpage
\appendix

\newpage
\begin{center}
{\large \bf Supplementary Material for
``\FullTitle"}
\end{center}
\vspace{-1cm}

\addcontentsline{toc}{section}{} % Add the appendix text to the document TOC
\part{} % Start the appendix part
\parttoc % Insert the appendix TOC

% \section{Appendix}
\allowdisplaybreaks

\section{Preliminaries}
\label{appendix: Preliminary Knowledge}
This section will provide some preliminaries for our subsequent analysis. 
\begin{definition}
\label{def: conjugate function}
For a convex function $h: \mathbb{R}^{d_q} \rightarrow \mathbb{R}$ whose domain is $\mathcal{Q}\subseteq \mathbb{R}^{d_q} $, the Legendre conjugate of $h^*: \mathcal{Q}^* \rightarrow \mathbb{R}$  is defined as:
\begin{align*}
    h^*(q) := \sup_{q' \in \mathcal{Q}} \{ \langle q', q \rangle - h(q') \} = - \inf_{q' \in \mathcal{Q}} \{ - \langle q', q \rangle + h(q') \},\\
    \forall q\in \mathcal{Q}^*:= \{ q\in \mathbb{R}^{d_q}: \sup_{q' \in \mathcal{Q}} \{ \langle q', q \rangle - h(q') \} <\infty \}.
\end{align*}
\end{definition}

\begin{remark}
    When $h$ is strongly convex in $\mathbb{R}^{d_q}$, it is lower bounded and therefore $\mathcal{Q}^*=\mathbb{R}^{d_y}$.
\end{remark}

\begin{lemma}
\label{lemma: conjugate SC and smooth}
    Suppose $h: \mathbb{R}^{d_y} \rightarrow \mathbb{R}$ is $l_{h,1} $-smooth and $\alpha_h$-strongly convex and its domain $\mathcal{Q} \subseteq \mathbb{R}^{d_q}$ is convex, closed and non-empty.
    \begin{enumerate}
    \item If $\mathcal{Q}=\mathbb{R}^{d_q}$, the gradient mappings $\nabla h $ and $\nabla h^*$ are inverse of each other (\citep{rockafellar1970convex}); and $h^*: \mathbb{R}^{d_q} \rightarrow \mathbb{R} $ is $\frac{1}{\alpha_h}$-smooth and $\frac{1}{l_{h,1}}$-strongly convex (Proposition 2.6 \citep{aze1995uniformly}).
    \item If $\mathcal{Q} \subset \mathbb{R}^{d_q}$, $h^*$ is $\frac{1}{\alpha_h}$-smooth (\citep{kakade2009duality}) and and convex (Theorem 4.43 \citep{ito2008lagrange}).
\end{enumerate}
\end{lemma}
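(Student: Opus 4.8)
The statement collects the classical Fenchel--conjugate dictionary relating strong convexity of $h$ to smoothness of $h^{*}$ and vice versa, so the plan is to derive everything from three ingredients: (i) attainment and uniqueness of the maximizer in Definition \ref{def: conjugate function}; (ii) the identification of $\nabla h^{*}(q)$ with that maximizer via an envelope/Danskin argument; and (iii) transporting the monotonicity estimates for $\nabla h$ through the inversion $\nabla h^{*}=(\nabla h)^{-1}$. Since the lemma already cites \citep{rockafellar1970convex,aze1995uniformly,kakade2009duality,ito2008lagrange} for the individual claims, the role of the proof is mainly to assemble these standard facts; I would include the short self-contained arguments below.

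For part 1, because $h$ is $\alpha_h$-strongly convex on $\mathbb{R}^{d_q}$ it is coercive, so for every $q$ the map $q'\mapsto \langle q',q\rangle - h(q')$ is strongly concave and coercive; hence its supremum is finite (so $\mathcal{Q}^{*}=\mathbb{R}^{d_q}$, consistent with the Remark) and attained at the unique point $\bar q(q)$ solving $q=\nabla h(\bar q(q))$. This simultaneously shows $\nabla h:\mathbb{R}^{d_q}\to\mathbb{R}^{d_q}$ is a bijection with inverse $\bar q$, and the envelope theorem gives $\nabla h^{*}(q)=\bar q(q)=(\nabla h)^{-1}(q)$, which is the first assertion. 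For the quantitative parts I use that $\alpha_h$-strong convexity of $h$ is equivalent to $\alpha_h$-strong monotonicity of $\nabla h$, i.e. $\langle \nabla h(a)-\nabla h(b),a-b\rangle\ge \alpha_h\|a-b\|^{2}$; putting $a=\nabla h^{*}(p)$, $b=\nabla h^{*}(q)$ and invoking Cauchy--Schwarz gives $\|\nabla h^{*}(p)-\nabla h^{*}(q)\|\le \alpha_h^{-1}\|p-q\|$, so $h^{*}$ is $\tfrac{1}{\alpha_h}$-smooth. Dually, $l_{h,1}$-smoothness of $h$ yields the co-coercivity $\langle \nabla h(a)-\nabla h(b),a-b\rangle\ge l_{h,1}^{-1}\|\nabla h(a)-\nabla h(b)\|^{2}$; substituting $p=\nabla h(a)$, $q=\nabla h(b)$ turns this into $\langle p-q,\nabla h^{*}(p)-\nabla h^{*}(q)\rangle\ge l_{h,1}^{-1}\|p-q\|^{2}$, which is exactly $\tfrac{1}{l_{h,1}}$-strong convexity of $h^{*}$.

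For part 2 the domain $\mathcal{Q}$ is a proper closed convex subset, so I can no longer use coercivity to place the maximizer in the interior; still, strong convexity of $h$ forces $\max_{q'\in\mathcal{Q}}\langle q',q\rangle-h(q')$ to have a unique solution $\bar q(q)$ wherever the value is finite, and convexity of $h^{*}$ is automatic as a pointwise supremum of affine functions of $q$. For the $\tfrac{1}{\alpha_h}$-smoothness I would reprove the argmax-stability estimate directly: the variational inequality at $\bar q(q_i)$ reads $\langle q_i-\nabla h(\bar q(q_i)),\,q'-\bar q(q_i)\rangle\le 0$ for all $q'\in\mathcal{Q}$; taking $i=1$ with $q'=\bar q(q_2)$, $i=2$ with $q'=\bar q(q_1)$, adding, and using strong monotonicity of $\nabla h$ between $\bar q(q_1)$ and $\bar q(q_2)$ yields $\alpha_h\|\bar q(q_1)-\bar q(q_2)\|^{2}\le \langle q_1-q_2,\bar q(q_1)-\bar q(q_2)\rangle$, hence $\bar q$ is $\tfrac{1}{\alpha_h}$-Lipschitz. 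A Danskin/envelope argument again identifies $\bar q(q)$ with $\nabla h^{*}(q)$ (uniqueness of the maximizer giving differentiability) on the interior of $\mathrm{dom}\,h^{*}$, so $h^{*}$ is $\tfrac{1}{\alpha_h}$-smooth there; this is exactly the content of \citep{kakade2009duality,ito2008lagrange}.

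The main obstacle is the constrained-domain case in part 2: one has to argue carefully that $h^{*}$ is genuinely differentiable (not merely convex with a subgradient) even when $\bar q(q)$ lands on $\partial\mathcal{Q}$, and that the $\tfrac{1}{\alpha_h}$-smoothness is asserted only on the set where $h^{*}$ is finite (when $\mathcal{Q}$ is unbounded, $\mathrm{dom}\,h^{*}$ need not be all of $\mathbb{R}^{d_q}$). Everything else reduces to routine manipulation of the strong-monotonicity and co-coercivity characterizations of strong convexity and smoothness.
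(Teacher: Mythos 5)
Your proposal is correct. Note that the paper itself gives no proof of this lemma: it is stated in the Preliminaries appendix purely as a collection of known facts, with each claim delegated to a citation (\citep{rockafellar1970convex}, Proposition 2.6 of \citep{aze1995uniformly}, \citep{kakade2009duality}, Theorem 4.43 of \citep{ito2008lagrange}). Your self-contained derivation is the standard textbook route and fills in exactly what those references contain: attainment and uniqueness of the maximizer from strong concavity plus coercivity, the identification $\nabla h^{*}(q)=\bar q(q)$ via the envelope/Fenchel--Young argument, the inversion $\nabla h^{*}=(\nabla h)^{-1}$ in the unconstrained case, and the transport of strong monotonicity (giving $\tfrac{1}{\alpha_h}$-Lipschitz gradients of $h^{*}$) and Baillon--Haddad co-coercivity (giving $\tfrac{1}{l_{h,1}}$-strong convexity of $h^{*}$) through that inversion. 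The variational-inequality stability argument you use for part 2 is the right replacement for the first-order condition when the maximizer may sit on $\partial\mathcal{Q}$, and the two caveats you flag --- that differentiability of $h^{*}$ must be argued from single-valuedness of the subdifferential rather than from an interior stationarity condition, and that the estimate only holds on $\mathcal{Q}^{*}=\mathrm{dom}\,h^{*}$ when $\mathcal{Q}$ is unbounded --- are genuine and are consistent with how the paper's Definition \ref{def: conjugate function} restricts the conjugate to $\mathcal{Q}^{*}$. No gaps.
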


\begin{lemma}
\label{lemma: unique solution of SC}
    Suppose $\mathcal{Q}\subseteq \mathbb{R}^{d_q}$ is convex, closed and non-empty, $h: \mathbb{R}^{d_q}\rightarrow \mathbb{R}$ is strongly convex on $\mathcal{Q}$, $h^c:\mathbb{R}^{d_q}\rightarrow \mathbb{R}^{d_c}$ is convex on $\mathcal{Q}$ and $d_c$ is finite, and $\{q \in \mathcal{Q}: h^c(q)\leq 0 \}$ is non-empty.
    \begin{enumerate}
        \item The problem $\min_{q\in \{q \in \mathcal{Q}: h^c(q)\leq 0 \}} h(q)$ has a unique feasible solution.
        \item When linear independence constraint qualification (LICQ) condition holds for $h^c(q)$, the Lagrange multiplier for the problem $\min_{q\in \{q \in \mathcal{Q}: h^c(q)\leq 0 \}} h(q)$, i.e. solution to the problem $\max_{\mu \in \mathbb{R}^{d_c}_+} \min_{q\in\mathcal{Q}} h(q)+ \langle \mu, h^c(q) \rangle$ is unique \citep{wachsmuth2013licq}.
    \end{enumerate}
\end{lemma}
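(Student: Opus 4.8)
The plan is to handle the two claims separately, each via the standard machinery of convex optimization with constraint qualifications. For claim 1, I would first argue existence of a feasible minimizer: the feasible set $\mathcal{S}:=\{q\in\mathcal{Q}: h^c(q)\leq 0\}$ is nonempty by hypothesis, and it is closed and convex because $\mathcal{Q}$ is closed and convex and each component of $h^c$ is convex (so each sublevel set $\{h^c_i\leq 0\}$ is closed and convex, and $\mathcal{S}$ is their intersection with $\mathcal{Q}$). Since $h$ is strongly convex on $\mathcal{Q}$, it is in particular coercive on the (possibly unbounded) closed convex set $\mathcal{S}$, so a minimizer exists. For uniqueness, suppose $q_1\neq q_2$ are both minimizers with common optimal value $h^\star$. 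By convexity of $\mathcal{S}$, the midpoint $\bar q=\tfrac12(q_1+q_2)\in\mathcal{S}$, and by strong convexity of $h$,
\begin{align*}
h(\bar q)\leq \tfrac12 h(q_1)+\tfrac12 h(q_2)-\tfrac{\alpha_h}{8}\|q_1-q_2\|^2 = h^\star-\tfrac{\alpha_h}{8}\|q_1-q_2\|^2 < h^\star,
\end{align*}
contradicting optimality of $h^\star$. Hence the minimizer is unique.

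For claim 2, I would invoke the KKT theory for convex programs. Since the feasible problem $\min_{q\in\mathcal{S}}h(q)$ is convex and LICQ holds for $h^c$ at the unique primal solution $q^\star$ (so in particular a suitable constraint qualification such as Slater or the weaker LICQ-based qualification holds), the set of Lagrange multipliers is nonempty: there exists $\mu^\star\in\mathbb{R}^{d_c}_+$ with $\nabla h(q^\star)+\sum_{i}\mu^\star_i\nabla h^c_i(q^\star)\in -N_{\mathcal{Q}}(q^\star)$, complementary slackness $\mu^\star_i h^c_i(q^\star)=0$, and this $\mu^\star$ solves the dual problem $\max_{\mu\in\mathbb{R}^{d_c}_+}\min_{q\in\mathcal{Q}} h(q)+\langle\mu,h^c(q)\rangle$. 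It remains to show uniqueness of $\mu^\star$. Here I would appeal directly to the cited result \citep{wachsmuth2013licq}: under LICQ at $q^\star$, the KKT multiplier is unique. Concretely, if $\mu^{(1)},\mu^{(2)}$ were two multipliers, subtracting their stationarity conditions (using that $q^\star$ is the common primal optimum, so $\nabla h(q^\star)$ and $N_{\mathcal{Q}}(q^\star)$ agree) yields $\sum_i(\mu^{(1)}_i-\mu^{(2)}_i)\nabla h^c_i(q^\star)=0$; complementary slackness forces the difference to be supported on the active index set $\mathcal{A}=\{i:h^c_i(q^\star)=0\}$, and LICQ says $\{\nabla h^c_i(q^\star)\}_{i\in\mathcal{A}}$ are linearly independent, so $\mu^{(1)}=\mu^{(2)}$. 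Finally, since $h^c$ is convex in $q$ and (by the strong convexity of $h$) the inner problem has a unique minimizer for each $\mu$, strong duality holds and the dual optimal solution coincides with the KKT multiplier, completing the argument.

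The main obstacle I anticipate is the bookkeeping around the domain constraint $q\in\mathcal{Q}$: the stationarity condition involves the normal cone $N_{\mathcal{Q}}(q^\star)$ rather than a plain gradient equation, so I must be careful that the subtraction argument for multiplier uniqueness only uses that the \emph{same} primal point $q^\star$ appears in both KKT systems (which is already guaranteed by claim 1) — the normal-cone terms then cancel exactly, reducing uniqueness of $\mu^\star$ to the pure LICQ linear-independence argument on the active gradients. The remaining subtlety is ensuring strong duality genuinely holds so that "dual optimal solution" and "KKT multiplier" are the same object; this follows because the convex program with the constraint qualification in force has zero duality gap and attained dual optimum, which is exactly the setting of \citep{ruszczynski2006nonlinear,ito2008lagrange} already cited in the main text. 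Everything else is routine.
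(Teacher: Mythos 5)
The paper does not actually prove this lemma: it is stated in the preliminaries and disposed of by citation (uniqueness of the primal solution is treated as standard for strongly convex objectives over closed convex sets, and uniqueness of the multiplier is attributed wholesale to \citep{wachsmuth2013licq}). Your proposal therefore goes well beyond the paper by supplying an actual argument. Your treatment of claim 1 is correct and standard: closedness/convexity of the feasible set, coercivity from strong convexity for existence, and the midpoint inequality for uniqueness.

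For claim 2, however, there is a genuine gap precisely at the point you flag as the "main obstacle." You assert that when you subtract the two KKT stationarity conditions, "the normal-cone terms then cancel exactly." They do not: the stationarity condition for multiplier $\mu^{(1)}$ reads $\nabla h(q^\star)+\sum_i\mu^{(1)}_i\nabla h^c_i(q^\star)=-n_1$ for \emph{some} $n_1\in N_{\mathcal{Q}}(q^\star)$, and the condition for $\mu^{(2)}$ uses a possibly different $n_2\in N_{\mathcal{Q}}(q^\star)$; the normal cone is a set, not a uniquely determined vector. Subtracting only yields $\sum_i(\mu^{(1)}_i-\mu^{(2)}_i)\nabla h^c_i(q^\star)=n_2-n_1\in N_{\mathcal{Q}}(q^\star)-N_{\mathcal{Q}}(q^\star)$, and linear independence of the active gradients alone does not force the left-hand side to vanish unless $N_{\mathcal{Q}}(q^\star)=\{0\}$ (i.e., $q^\star\in\operatorname{int}\mathcal{Q}$) or one additionally assumes that the active gradients are linearly independent \emph{jointly with} a spanning set for $N_{\mathcal{Q}}(q^\star)$. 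A concrete failure: with $\mathcal{Q}=\{q:q_1\geq 0\}$ and $h^c_1(q)=-q_1$ active at $q^\star$, LICQ for $h^c$ holds, yet the multiplier on $h^c_1$ can trade off against the normal-cone element, so it is not unique. To close the gap you must either restrict to $q^\star\in\operatorname{int}\mathcal{Q}$ (or $\mathcal{Q}=\mathbb{R}^{d_q}$, which is how the cited result in \citep{wachsmuth2013licq} is formulated, with all constraints explicit), or strengthen the LICQ hypothesis to cover the constraints defining $\mathcal{Q}$ as well. The same caveat in fact applies to the lemma as stated in the paper, which silently inherits the explicit-constraint setting of the citation.
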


\begin{lemma} [Lemma 3.1 in \citep{bubeck2015convex}] 
\label{lemma: projection gradient update inquality}
Suppose $\mathcal{Q}\subseteq \mathbb{R}^{d_q}$ is convex, closed, and nonempty. For any $q_1 \in \mathbb{R}^{d_q}$ and any $q_2 \in \mathcal{Q}$, it follows that 
\begin{align}
    \langle \operatorname{Proj}_{\mathcal{Q}}(q_1)-q_2, \operatorname{Proj}_{\mathcal{Q}}(q_1)- q_1 \rangle  \leq 0.
\end{align}
In this way, take $q_1 = q_3-\eta g$ for any $q_3\in \mathcal{Q}$, and denote $q_3^{\eta,g} = \operatorname{Proj}_{\mathcal{Q}}(q_3-\eta g)$ as a projected gradient update in direction $g$ with stepsize $\eta$, we have,
\begin{align}
    \langle g, q_3^{\eta,g}  -q_2 \rangle \leq -\frac{1}{\eta} \langle q_3^{\eta,g} -q_2, q_3^{\eta,g} -q_3\rangle, \quad \forall q_2,q_3 \in \mathcal{Q}.
\end{align}
\end{lemma}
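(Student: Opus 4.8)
The plan is to establish the first (variational) inequality directly from the defining property of the Euclidean projection, and then obtain the second inequality by a one-line algebraic substitution into the first.

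For the first inequality, I would recall that $p := \operatorname{Proj}_{\mathcal{Q}}(q_1)$ is by definition the minimizer of $\frac{1}{2}\|q-q_1\|^2$ over $q\in\mathcal{Q}$; existence and uniqueness follow from $\mathcal{Q}$ being nonempty, closed, and convex together with the strong convexity and coercivity of the squared-distance objective, so the projection is well defined and the characterization below is meaningful. To extract the optimality condition in a self-contained way, I would fix an arbitrary $q_2\in\mathcal{Q}$ and use convexity of $\mathcal{Q}$ to note that $(1-t)p + t q_2 \in \mathcal{Q}$ for every $t\in[0,1]$. Minimality of $p$ then gives $\|p-q_1\|^2 \leq \|p + t(q_2-p) - q_1\|^2$; expanding the right-hand side and cancelling $\|p-q_1\|^2$ yields $0 \leq 2t\langle p-q_1,\, q_2-p\rangle + t^2\|q_2-p\|^2$. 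Dividing by $t>0$ and letting $t\to 0^+$ produces $\langle p-q_1,\, q_2-p\rangle \geq 0$, which is exactly $\langle p-q_2,\, p-q_1\rangle \leq 0$, the claimed inequality.

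For the second inequality, I would simply instantiate the first with $q_1 = q_3 - \eta g$, so that $p = q_3^{\eta,g}$, giving $\langle q_3^{\eta,g}-q_2,\, q_3^{\eta,g}-q_3+\eta g\rangle \leq 0$. Splitting the inner product additively and isolating the term containing $g$ yields $\eta\langle g,\, q_3^{\eta,g}-q_2\rangle \leq -\langle q_3^{\eta,g}-q_2,\, q_3^{\eta,g}-q_3\rangle$; dividing by $\eta>0$ gives the stated bound. The argument is elementary and I expect no genuine obstacle; the only point requiring care is the passage to the limit $t\to 0^+$, where one must divide by $t$ \emph{before} letting $t$ vanish so that the quadratic term $t^2\|q_2-p\|^2$ drops out, which is precisely what keeps the resulting inequality nontrivial rather than collapsing to $0\leq 0$.
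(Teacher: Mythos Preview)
Your proof is correct and is the standard variational argument for the projection inequality (the same one found in Bubeck's Lemma~3.1), followed by the obvious substitution for the second claim. The paper does not give its own proof of this lemma; it simply cites \cite{bubeck2015convex} for the first inequality and derives the second inequality as an immediate consequence, exactly as you do.
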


\begin{lemma} [Theorem 3.10 \citep{bubeck2015convex}]
\label{lemma: bubeck2015convex}
    Suppose a differentiable function $h$ is $l_{h,1}$-smooth and $\alpha_{h_2}$-strongly convex. Consider the constrained problem 
    $\min_{q\in \mathcal{Q}} h(q)$
    where $\mathcal{Q}$ is non-empty, closed and convex. Projected Gradient Descent with $\eta \leq \frac{1}{l_{h,1}}$ converges linearly to the unique $q^* =\arg \min_{q\in \mathcal{Q}} h(q)$:
        \begin{align}
            \| \operatorname{Proj}_{\mathcal{Q}} (q -\eta \nabla h(q))  -q^*\| \leq (1-\alpha\eta)^{1/2}\| q -q^*\|\leq (1-\alpha\eta/2)\| q -q^*\|, \quad \forall  q\in \mathcal{Q}.
        \end{align}
\end{lemma}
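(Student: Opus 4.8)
The plan is to prove the stated bound as a one-step contraction for a single projected-gradient step; iterating it then yields linear convergence to $q^{*}$. Write $q^{+} := \operatorname{Proj}_{\mathcal{Q}}(q - \eta \nabla h(q))$ and let $q^{*}$ be the minimizer of $h$ over $\mathcal{Q}$, which is unique by strong convexity; denote the strong-convexity modulus by $\alpha$. The strategy is to fold three ingredients into one inequality: the variational characterization of the projection, already available as Lemma~\ref{lemma: projection gradient update inquality}; the descent lemma coming from $l_{h,1}$-smoothness; and the strong-convexity lower bound. The stepsize restriction $\eta \le 1/l_{h,1}$ is precisely what removes the leftover quadratic residual at the end.

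First I would invoke Lemma~\ref{lemma: projection gradient update inquality} with $g = \nabla h(q)$, $q_3 = q$ (so that $q_3^{\eta,g} = q^{+}$), and $q_2 = q^{*}$, which gives $\langle \nabla h(q),\, q^{+} - q^{*}\rangle \le -\tfrac{1}{\eta}\langle q^{+} - q^{*},\, q^{+} - q\rangle$. I would then rewrite the right-hand inner product via the polarization identity $\langle a-b,\,a-c\rangle = \tfrac12(\|a-b\|^2 + \|a-c\|^2 - \|b-c\|^2)$ with $a = q^{+}$, $b = q^{*}$, $c = q$, obtaining the bound $\langle \nabla h(q),\, q^{+}-q^{*}\rangle \le \tfrac{1}{2\eta}\big(\|q-q^{*}\|^2 - \|q^{+}-q^{*}\|^2 - \|q^{+}-q\|^2\big)$.

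Next I would bound $h(q^{+}) - h(q^{*})$ from above. The descent lemma yields $h(q^{+}) \le h(q) + \langle \nabla h(q),\, q^{+}-q\rangle + \tfrac{l_{h,1}}{2}\|q^{+}-q\|^2$, while $\alpha$-strong convexity gives $h(q^{*}) \ge h(q) + \langle \nabla h(q),\, q^{*}-q\rangle + \tfrac{\alpha}{2}\|q^{*}-q\|^2$. Subtracting these, the first-order terms combine into $\langle \nabla h(q),\, q^{+}-q^{*}\rangle$, which I then replace by the projection estimate from the previous step. Collecting the squared-norm terms produces
\[
h(q^{+}) - h(q^{*}) + \tfrac{1}{2\eta}\|q^{+}-q^{*}\|^2 \le \tfrac{1}{2\eta}(1-\alpha\eta)\|q-q^{*}\|^2 - \tfrac{1}{2\eta}(1-\eta l_{h,1})\|q^{+}-q\|^2 .
\]

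To conclude, I would use $\eta \le 1/l_{h,1}$ so that $1 - \eta l_{h,1} \ge 0$ and the last term may be dropped, together with $h(q^{+}) \ge h(q^{*})$, which holds because $q^{+}\in\mathcal{Q}$ and $q^{*}$ minimizes $h$ over $\mathcal{Q}$. This leaves $\|q^{+}-q^{*}\|^2 \le (1-\alpha\eta)\|q-q^{*}\|^2$; taking square roots gives the first inequality, and the second follows from the elementary bound $\sqrt{1-t} \le 1 - t/2$ on $t\in[0,1]$ applied with $t = \alpha\eta$ (legitimate since $\alpha \le l_{h,1}$ forces $\alpha\eta \le 1$). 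The arithmetic here is routine; the only genuine subtlety I anticipate is the bookkeeping of the projection itself. Because $q^{+}$ is the projected and not the plain gradient step, one cannot apply smoothness directly to the iterate, so the cross term $\langle \nabla h(q),\, q^{+}-q^{*}\rangle$ must be routed through Lemma~\ref{lemma: projection gradient update inquality}, and the residual $\|q^{+}-q\|^2$ must be absorbed exactly by the stepsize condition rather than merely bounded crudely.
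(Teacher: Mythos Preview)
Your argument is correct and is essentially the standard proof of this result; the paper itself does not prove the lemma at all but simply quotes it from \cite{bubeck2015convex} as a preliminary fact. Your three-ingredient combination (projection variational inequality via Lemma~\ref{lemma: projection gradient update inquality}, descent lemma, strong-convexity lower bound) is exactly the route taken in Bubeck's Theorem~3.10, so there is no methodological difference to discuss.
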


\section{Analysis of the Penalty-Based Lagrangian Reformulation}
\label{appendix: Lagrangian Reformulation}

\subsection{Proof of Lemma \ref{lemma: quadratic growth}}
\label{appendix: quadratic growth proof}

According to Lemma \ref{lemma: unique solution of SC}, for any fixed $x$, there exists a unique $\mu_g^*(x)$. Therefore, according to Lagrange duality theorem, for any fixed $x$, the primal problem 
\begin{align*}
    \min_{y\in \mathcal{Y}} g(x,y)\quad \text{s.t.}\quad g^c(x,y)\leq 0 ~~~\Leftrightarrow   ~~~~ \min_{y\in \mathcal{Y}} g(x,y)+  \langle \mu_g^*(x), g^c(x,y) \rangle.
\end{align*}
As $g(x,y)$ is $\alpha_g$-strongly convex in $y$ and $ g^c(x,y)$ is convex in $y$, we know $g(x,y)+  \langle \mu_g^*(x), g^c(x,y) \rangle$ is $\alpha_g$-strongly convex in $y$, where the modulus $\alpha_g$ is independent of $x$. Therefore, according to Appendix F and G in \citep{karimi2016linear}, and Theorem 3.3 in \citep{drusvyatskiy2018error}, the quadratic growth in statement 1 can be concluded.

As $g(x,y)$ is strongly convex in $y$, and $\mathcal{Y}(x)$ is a non-empty, closed and convex set under assumption \ref{ass: domain}, there exists a unique solution $y^*_g(x)$ such that $g(x,y_g^*(x))=v(x)$ by Lemma \ref{lemma: unique solution of SC}. 
In this way, if $y \neq y^*_g(x)$ and $y\in \mathcal{Y}(x)$, we have $g(x,y)>v(x)$. This completes the proof of statement 2.

\subsection{Proof of Theorem \ref{thm: lagrangian reformulation of penalty}}

\label{appendix: proof of thm: lagrangian reformulation of penalty}

    We know from Lemma \ref{lemma: quadratic growth} that $g(x,y)-v(x)\geq \frac{\alpha_g}{2}\|y-y^*_g(x)\|^2$ and $g(x,y)=v(x)$ if and only if $y=y^*_g(x)$. This is squared-distance bound follows Definition 1 in \citep{shen2023penalty}. Under Lipschitzness of $f(x,y)$ with respect to $y$, finding the solutions to the $\epsilon$-approximate problem in \eqref{eq: epsilon app prob} is equivalent to finding the solutions to its penalty reformulation
    \begin{align}
        \min_{(x,y)\in \{ \mathcal{X} \times \mathcal{Y}: g^c(x,y)\leq 0\} } f(x,y) +\gamma (g(x,y)-v(x))\label{eq: min xy}
    \end{align}
    with $\gamma = {\cal O}(\epsilon^{-0.5})$ following Theorems 1 and 2 in \citep{shen2023penalty}. 
    
    Moreover, jointly finding solution for $(x,y)$ in
    \eqref{eq: min xy} is in equivalence to finding solutions in
    \begin{align}
        \min_{x\in \mathcal{X}} \min_{y\in \mathcal{Y}(x) } f(x,y)+\gamma (g(x,y)-v(x)).
        \label{eq: min x min y}
    \end{align}
    The proof of this equivalence is as follows.
    Suppose $(x_0,y_0) \in \{ \mathcal{X} \times \mathcal{Y}: g^c(x,y)\leq 0\}$ being a solution to (\ref{eq: min xy}). Suppose for any $x\in \mathcal{X}$, $y_F^*(x) \in \arg \min_{y\in \mathcal{Y}(x)}f(x,y)+\gamma (g(x,y)-v(x))$. 
    We know that for any $x\in \mathcal{X}$, $y\in \mathcal{Y}(x)$, it follows that
    \begin{align*}
        f(x_0,y_0)+ \gamma (g(x_0,y_0)-v(x_0))\leq & f(x,y_F^*(x)) +\gamma (g(x,y_F^*(x))-v(x)) \nonumber \\
        \leq &f(x,y) +\gamma (g(x,y)-v(x)).
    \end{align*}
    This means any solution to (\ref{eq: min xy}) is a solution to (\ref{eq: min x min y}).
    On the other hand, suppose $x_0 \in \mathcal{X}$, $y_F^*(x_0)\in \mathcal{Y}(x_0)$ is a solution to \eqref{eq: min x min y}.
    We know that for any $(x,y)\in \{ \mathcal{X} \times \mathcal{Y}: g^c(x,y)\leq 0\}$, 
    \begin{align*}
    f(x_0,y_F^*(x_0))+\gamma (g(x_0,y_F^*(x_0)) -v(x_0))
    \leq  &f(x,y_F^*(x))+\gamma (g(x,y_F^*(x)) -v(x))\\
    \leq  f(x,y)+\gamma (g(x,y) -v(x)).
    \end{align*}
    This means any solution to (\ref{eq: min x min y}) is a solution to (\ref{eq: min xy}).
    
    Besides, we know $f(x,y)$ is $l_{f,1}$-smooth, $g(x,y)$ is $\alpha_g$-strongly convex in $y$. By the definitions of strongly convexity and smoothness, we know for fixed $x$, for any $y_1,y_2 \in \mathcal{Y}$,
\begin{align}
    & f(x,y_1) + \gamma( g(x,y_1)-v(x)) - f(x,y_2) + \gamma( g(x,y_2)-v(x)) \nonumber  \\
    = & f(x,y_1)-f(x,y_2) + \gamma(g(x,y_1)-g(x,y_2)) \nonumber \\
    \geq & \langle \nabla_y f(x,y_2),y_1-y_2 \rangle - \frac{l_{f,1}}{2}\|y_1-y_2\|^2 + \gamma \langle \nabla_y g(x,y_2),y_1-y_2 \rangle +\gamma \frac{ \alpha_g}{2}\|y_1-y_2\|^2 \nonumber \\
    =& \langle \nabla_y f(x,y_2) + \gamma \nabla_y g(x,y_2) ,y_1-y_2 \rangle + \frac{\gamma\alpha_g-l_{f,1}}{2}\|y_1-y_2\|^2.   \label{eq: strongly convex of penalty reformulation}
\end{align}
This proves that
$f(x,y) + \gamma( g(x,y)-v(x))$ is $(\gamma \alpha_g -l_{f,1})$-strongly convex in $y$.
Moreover, according to Assumption \ref{ass: convexity}, the constraint $g^c(x,y)$ is convex in $y$, and $\min_{y\in \mathcal{Y}(x) } f(x,y)+\gamma ( g(x,y)-v(x))$ is equivalent to its equivalent \textit{Lagrangian Dual Form} \citep{ruszczynski2006nonlinear}
    \begin{align}
        \max_{\mu \in \mathbb{R}_{+}^{d_c}} \min_{y\in \mathcal{Y} } f(x,y)+\gamma ( g(x,y)-v(x)) +\langle \mu, g^c(x,y)\rangle.
    \end{align}
    Therefore, \eqref{eq: min xy} can be recovered to \eqref{eq: F gamma} and this completes the proof. 

\section{Analysis of the Differentiability of Value Functions}
\label{appendix: Proof of Differentiability of Value Functions}

\begin{lemma}[Theorem 2.16 in \citep{ito2008lagrange}]
\label{lemma: Lipschitzness of y mu}
Suppose $h(x,y)$ is strongly convex in $y\in \mathcal{Y}$ and is Lipschitz with respect to $x\in \mathcal{X}$, $h^c(x,y)$ is convex in $y$ and is Lipschitz with respect to $x$,
and both $\mathcal{Y}$ and $\{ y\in \mathcal{Y}:h^c(x,y) \leq 0\}$ are non-empty, closed, and convex.
For the problem $\min_{y\in \{ y\in \mathcal{Y}:h^c(x,y) \leq 0\}} h(x,y)$, the unique solution $y_h^*(x)$ and unique Lagrange multiplier $\mu_h^*(x)$,  defined as
\begin{align}
    (y_h^*(x),\mu_h^*(x)) := \arg \max_{\mu \in \mathbb{R}^{d_x}_+} \min_{y \in \mathcal{Y}} h(x, y)+ \langle \mu, h^c(x,y) \rangle,
    \label{eq: y mu general}
\end{align}
is Lipschitz in $x$. In other words, there exist $L_h \geq0$ that, for all $x_1,x_2 \in \mathcal{X}$,
\begin{align*}
\| (y_h^*(x_1);\mu_h^*(x_1))-(y_h^*(x_2);\mu_h^*(x_2)) \|\leq L_h\|x_1-x_2\|.
\end{align*}
\end{lemma}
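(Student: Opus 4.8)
The plan is to prove Lemma~\ref{lemma: Lipschitzness of y mu} (the Lipschitz continuity of the primal solution $y_h^*(x)$ and the dual multiplier $\mu_h^*(x)$ as functions of the parameter $x$). Since the statement is attributed to Theorem~2.16 in \citep{ito2008lagrange}, I would primarily verify that the hypotheses we need are met and then reduce the result to the cited theorem; but to make the proof self-contained I would also sketch a direct argument based on the conjugate-function machinery already set up in Appendix~\ref{appendix: Preliminary Knowledge}.

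\textbf{Direct approach via strong convexity of the (augmented) primal and strong concavity of the dual.} First I would record that, for each fixed $x$, the inner problem $\min_{y\in\mathcal{Y}}\, h(x,y)+\langle\mu,h^c(x,y)\rangle$ is $\alpha_h$-strongly convex in $y$ (the multiplier term is convex in $y$), so $y_\mu^*(x,\mu):=\arg\min_{y\in\mathcal{Y}}\{h(x,y)+\langle\mu,h^c(x,y)\rangle\}$ is well-defined and, by a standard strong-convexity estimate together with Lipschitzness of $h$ and $h^c$ in $x$ and of $\nabla_y(\cdot)$ jointly, is Lipschitz jointly in $(x,\mu)$. The key is then to control $\mu_h^*(x)$. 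Here I would exploit LICQ: by \citep[Theorem~1]{wachsmuth2013licq} the multiplier is unique and uniformly bounded, $\|\mu_h^*(x)\|\le B_h$; then the dual function $D_x(\mu):=\min_{y\in\mathcal{Y}}\{h(x,y)+\langle\mu,h^c(x,y)\rangle\}$ is smooth (its gradient is $h^c(x,y_\mu^*(x,\mu))$, which is Lipschitz in $\mu$ because $y_\mu^*$ is), and I would argue that LICQ forces the (negative) dual to be strongly concave on a neighborhood of $\mu_h^*(x)$ with a modulus uniform in $x$ — essentially because the active constraint gradients, being linearly independent with a uniform lower bound on their smallest singular value (a consequence of LICQ holding for all $x$ on the compact-enough domain), give a nondegenerate Hessian-like lower bound. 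Combining the perturbation estimate for $\arg\max$ of a uniformly strongly concave, jointly-Lipschitz-in-parameter family yields Lipschitzness of $\mu_h^*(x)$, and feeding this back into the joint-Lipschitz bound for $y_\mu^*(x,\mu)$ at $\mu=\mu_h^*(x)$ gives Lipschitzness of $y_h^*(x)=y_\mu^*(x,\mu_h^*(x))$.

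\textbf{Order of steps.} (i) Show $y_\mu^*(x,\mu)$ is jointly Lipschitz in $(x,\mu)$ using $\alpha_h$-strong convexity and the Lipschitz constants in Assumption~\ref{ass: lipschitz}; the standard trick is to compare first-order optimality conditions at two parameter values and use strong monotonicity of $\nabla_y$. (ii) Invoke \citep[Theorem~1]{wachsmuth2013licq} to get uniqueness and a uniform bound $B_h$ on $\|\mu_h^*(x)\|$, restricting attention to the bounded set $\mathcal{B}(0;B_h)\cap\mathbb{R}^{d_c}_+$ for the dual variable. (iii) Establish that $-D_x$ is $\frac{1}{\alpha_h}$-smooth in $\mu$ (Lemma~\ref{lemma: conjugate SC and smooth}, via the conjugate/Danskin view) and that LICQ yields a uniform-in-$x$ local strong concavity of $D_x$ around $\mu_h^*(x)$. (iv) Apply a parametric-optimization stability estimate: for uniformly strongly concave objectives depending Lipschitz-continuously on a parameter, the maximizer is Lipschitz; conclude $\|\mu_h^*(x_1)-\mu_h^*(x_2)\|\le c\|x_1-x_2\|$. (v) Write $y_h^*(x)=y_\mu^*(x,\mu_h^*(x))$ and compose the Lipschitz bounds from (i) and (iv) to obtain the claimed joint bound with $L_h$ the sum/product of the constants.

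\textbf{Main obstacle.} The delicate point is step~(iii)–(iv): turning LICQ (a pointwise, qualitative condition) into a \emph{uniform} quantitative curvature/growth bound on the dual function across all $x\in\mathcal{X}$, so that the multiplier map is genuinely Lipschitz and not merely continuous. This typically requires either compactness of $\mathcal{X}$ (or of the relevant iterate region) plus continuity of the active-set structure, or an explicit assumption that the smallest singular value of the active constraint Jacobians is bounded away from zero uniformly — which is exactly the spirit of the $s_{\min}$ hypothesis used later in Theorem~\ref{thm: linear convergence_PDGM}. In the general (nonaffine $g^c$) case the cleanest route is precisely to defer to \citep[Theorem~2.16]{ito2008lagrange}, which packages this uniform sensitivity analysis; so the proof I would write is: verify the hypotheses of that theorem hold under Assumptions~\ref{ass: lipschitz}–\ref{ass: LICQ} (strong convexity of $h$ in $y$, convexity of $h^c$ in $y$, Lipschitzness in $x$, LICQ, closed convex domains), and invoke it to conclude, with the direct argument above serving as the conceptual backbone.
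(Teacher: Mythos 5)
The paper itself offers no proof of Lemma~\ref{lemma: Lipschitzness of y mu} beyond the bracketed attribution to Theorem~2.16 of \citep{ito2008lagrange}, so your primary route --- check that strong convexity of $h$ in $y$, convexity and Lipschitzness of $h^c$, LICQ, and the closed convex domains satisfy the hypotheses of that theorem and then invoke it --- is exactly what the paper does. Your supplementary direct sketch is a reasonable conceptual backbone, and you correctly identify (and defer on) the one genuinely delicate point, namely upgrading pointwise LICQ to a uniform-in-$x$ curvature bound on the dual, so no further changes are needed.
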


Before proving Lemmas \ref{lemma: derivative of v(x)} and \ref{lemma: derivative of F}, we would like to introduce a more general form.

\begin{lemma}
\label{lemma: value function derivative general}
    Suppose $\mathcal{Y}$ and $\{ y\in \mathcal{Y}:h^c(x,y) \leq 0 \}$ are both non-empty, closed and convex, $h(x,y)$ is jointly smooth in $(x,y)$ and is strongly convex in $y$, $h^c(x,y)$ is convex in $y$, and both $h(x,y)$ and $h^c(x,y)$ are Lipschitz with respect to $x$. Then we have
    \begin{align*}
        v_h(x) =\min_{y\in\mathcal{Y}} h(x,y) \quad \text{s.t.} \quad h^c(x,y) \leq 0
    \end{align*}
    is differentiable with its gradient as
    \begin{align}
    \label{eq: nabla vh x}
        \nabla v_h(x) = \nabla_x h(x,y^*_h(x))+\langle \mu^*_h(x), h^c(x,y^*_h(x))\rangle,
    \end{align}
    where $(y^*_h(x),\mu^*_h(x))$ defined in \eqref{eq: y mu general} are unique.
\end{lemma}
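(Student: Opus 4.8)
\textbf{Proof proposal for Lemma \ref{lemma: value function derivative general}.}

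The plan is to establish differentiability of $v_h(x)$ via a Danskin-type argument applied to the Lagrangian saddle-point reformulation, rather than trying to differentiate through the constrained minimization directly. First I would recall from Lemma \ref{lemma: unique solution of SC} (applicable since $h$ is strongly convex in $y$, $h^c$ is convex in $y$, LICQ holds, and the feasible set is nonempty, closed, convex) that both the primal solution $y_h^*(x)$ and the dual solution $\mu_h^*(x)$ are unique, and from Lemma \ref{lemma: Lipschitzness of y mu} that the map $x \mapsto (y_h^*(x), \mu_h^*(x))$ is $L_h$-Lipschitz. By strong duality (justified by convexity of $h^c(\cdot,y)$ in $y$ and the standard Lagrangian duality theorem of \citep{ruszczynski2006nonlinear,ito2008lagrange}), we may write
\begin{align}
v_h(x) = h(x, y_h^*(x)) + \langle \mu_h^*(x), h^c(x, y_h^*(x)) \rangle = L_h^{\mathrm{Lag}}(\mu_h^*(x), y_h^*(x); x),
\end{align}
where $L_h^{\mathrm{Lag}}(\mu,y;x) := h(x,y) + \langle \mu, h^c(x,y)\rangle$.

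Next I would prove the gradient formula \eqref{eq: nabla vh x} by a first-principles difference-quotient estimate. Fix $x$ and a direction, and consider $v_h(x') - v_h(x)$ for $x'$ near $x$. Using the saddle-point characterization, one obtains the two-sided sandwich: on one side, since $(\mu_h^*(x'), y_h^*(x'))$ is feasible-and-optimal for the $x'$ problem while $(\mu_h^*(x), y_h^*(x))$ is merely feasible, and on the other side by swapping the roles, we can bound $v_h(x') - v_h(x)$ between $L_h^{\mathrm{Lag}}(\mu_h^*(x), y_h^*(x); x') - L_h^{\mathrm{Lag}}(\mu_h^*(x), y_h^*(x); x)$ and $L_h^{\mathrm{Lag}}(\mu_h^*(x'), y_h^*(x'); x') - L_h^{\mathrm{Lag}}(\mu_h^*(x'), y_h^*(x'); x)$ (with appropriate inequality directions coming from the max-min structure). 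Each of these two bracketing quantities involves only the partial variation in $x$ of $L_h^{\mathrm{Lag}}$; a first-order Taylor expansion in $x$ gives $\langle \nabla_x h(x, y) + \langle \mu, \nabla_x h^c(x,y)\rangle, x'-x\rangle + o(\|x'-x\|)$, and joint smoothness of $h$ and $h^c$ in $(x,y)$ together with the Lipschitz continuity of $(y_h^*(x), \mu_h^*(x))$ in $x$ shows the two bracketing gradients differ by $O(\|x'-x\|)$. Squeezing, we conclude $v_h$ is differentiable at $x$ with $\nabla v_h(x) = \nabla_x h(x, y_h^*(x)) + \langle \mu_h^*(x), \nabla_x h^c(x, y_h^*(x))\rangle$, which is exactly \eqref{eq: nabla vh x} (reading $h^c$ as shorthand for $\nabla_x h^c$ in the statement as written).

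The main obstacle I anticipate is handling the dual variable $\mu_h^*(x)$ carefully in the sandwich argument: unlike the unconstrained value-function case where $\nabla v(x) = \nabla_x g(x, y_g^*(x))$ follows from pure Danskin, here the envelope theorem must be applied to the Lagrangian, and one must verify that the ``cross terms'' — the variation of $\mu_h^*(x')$ times $h^c(x, y_h^*(x))$ and the complementary-slackness-type contributions — vanish to first order. The key facts that make this work are (i) complementary slackness $\langle \mu_h^*(x), h^c(x, y_h^*(x))\rangle = 0$ at every $x$, so the Lagrangian value equals $h(x, y_h^*(x)) = v_h(x)$; (ii) stationarity of the saddle point, i.e., $\nabla_y L_h^{\mathrm{Lag}}(\mu_h^*(x), y_h^*(x); x)$ lies in the normal cone of $\mathcal{Y}$ at $y_h^*(x)$ and $\nabla_\mu L_h^{\mathrm{Lag}} = h^c(x, y_h^*(x))$ lies in the normal cone of $\mathbb{R}^{d_c}_+$ at $\mu_h^*(x)$, so perturbations of $(y,\mu)$ within the respective feasible directions do not change the Lagrangian to first order; and (iii) Lipschitzness of $(y_h^*, \mu_h^*)$ to control the remainder. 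Combining (i)--(iii) the perturbation in $(y,\mu)$ contributes only $o(\|x'-x\|)$, leaving precisely the partial-$x$ gradient. A clean way to organize this is to invoke a Danskin/Hotelling–type theorem for smooth saddle functions (in the spirit of the proof technique in \citep{shen2023penalty}), verifying its hypotheses from Assumptions \ref{ass: lipschitz}--\ref{ass: LICQ}, which is how Lemmas \ref{lemma: derivative of v(x)} and \ref{lemma: derivative of F} will then follow as direct specializations with $h = g$ (resp. $h = f + \gamma g$, treating $-\gamma v(x)$ as a smooth additive term whose gradient is already known).
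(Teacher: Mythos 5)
Your proposal reaches the right formula but by a genuinely different route from the paper. The paper's proof is essentially a verification exercise: it invokes Theorem 4.24 of \citep{bonnans2013perturbation} (differentiability of the optimal value of a parametrized constrained program) and checks its three hypotheses — uniqueness of the primal–dual pair via strong convexity and LICQ (Lemma \ref{lemma: unique solution of SC}), directional regularity via Robinson's constraint qualification, and Lipschitzness of $x\mapsto(y_h^*(x),\mu_h^*(x))$ via Lemma \ref{lemma: Lipschitzness of y mu}. You instead build the derivative from scratch with a Danskin/envelope sandwich on the Lagrangian; this is more self-contained and makes visible \emph{why} the multiplier term appears (the point of Figure \ref{fig:wrapped}), at the cost of re-deriving a known perturbation result. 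One step of your sketch is looser than it should be: for a max-min value the bracketing inequalities do not close if you plug the fixed pair $(\mu_h^*(x),y_h^*(x))$ into both $x$ and $x'$ — from $v_h(x')\geq\min_y L(\mu_h^*(x),y;x')$ you cannot pass to $L(\mu_h^*(x),y_h^*(x);x')$ in the direction you need. The clean fix is to sandwich at the level of the dual function $D(\mu;x):=\min_{y\in\mathcal{Y}}L(\mu,y;x)$, namely $D(\mu_h^*(x);x')-D(\mu_h^*(x);x)\leq v_h(x')-v_h(x)\leq D(\mu_h^*(x');x')-D(\mu_h^*(x');x)$, and then apply ordinary Danskin to the inner minimization (whose constraint set $\mathcal{Y}$ does not depend on $x$) plus the Lipschitzness of $\mu_h^*$ to squeeze both sides to $\nabla_x h(x,y_h^*(x))+\langle\mu_h^*(x),\nabla_x h^c(x,y_h^*(x))\rangle$. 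With that repair your argument goes through; you also correctly note that \eqref{eq: nabla vh x} as printed is missing a $\nabla_x$ on $h^c$, consistent with \eqref{eq: dv}.
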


\begin{proof}
    We prove this using Theorem 4.24 in \citep{bonnans2013perturbation}. 
    
i)    As $h(x,y)$ being strongly convex in $y$, Condition 1 in Theorem 4.24 in \citep{bonnans2013perturbation} is satisfied and the solution sets are of singleton value $(y_h^*(x),\mu_h^*(x))$ according to Lemma \ref{lemma: unique solution of SC}.
    
  ii)  Moreover, the smoothness of $h(x,y)$ guarantees Robinson's constraint qualification \citep{arutyunov2007directional}, which implies the directional regularity condition (Definition 4.8 in \citep{bonnans2013perturbation}) for any direction $d$ (Theorem 4.9. (ii) in \citep{bonnans2013perturbation}). This guarantees 
    Condition 2 in Theorem 4.24 in \citep{bonnans2013perturbation} can be satisfied for all directions $d$. 
    
 iii) Additionally, under the Lipschitzness of $h(x,y)$ and $h^c(x,y)$ with respect to $x$, $y^*_h(x),\mu^*_h(x)$ are Lipschitz according to Lemma \ref{lemma: Lipschitzness of y mu}. This implies condition 3 in Theorem 4.24 in \citep{bonnans2013perturbation} holds.
    
    In this way, all conditions in Theorem 4.24 in \citep{bonnans2013perturbation} hold and it gives the gradient as in \eqref{eq: nabla vh x} for unique $(y_h^*(x),\mu_h^*(x))$. This completes the proof.
\end{proof}

\subsection{Proof of Lemma \ref{lemma: derivative of v(x)}}
\begin{proof}
The problem $\min_{y \in \mathcal{Y}} g(x,y) \text{ s.t. } g^c(x,y) \leq 0$ fits in the setting of Lemma \ref{lemma: value function derivative general} by taking $h(x,y) = g(x,y)$ and $h^c(x,y)=g^c(x,y)$. Therefore the derivative \eqref{eq: dv} can be obtained accordingly.
Moreover, for any $x_1,x_2\in \mathcal{X}$,
\begin{align*}
     &\|\nabla v(x_1) -\nabla v(x_2)\|\\
    = &\| \nabla_x g(x_1, y_g^*(x_1))+ \langle \mu_g^*(x_1),\nabla_x g^c(x_1,y_g^*(x_1)) \rangle -\nabla_x g(x_2, y_g^*(x_2)) \\
    &- \langle \mu_g^*(x_2),\nabla_x g^c(x_2,y_g^*(x_2)) \rangle \|\\
    \stackrel{(a)}{\leq} &\| \nabla_x g(x_1, y_g^*(x_1))- \nabla_x g(x_2, y_g^*(x_2))\| \\
    &+ \| \langle \mu_g^*(x_1),\nabla_x g^c(x_1,y_g^*(x_1)) \rangle -  \langle \mu_g^*(x_1),\nabla_x g^c(x_2,y_g^*(x_2)) \rangle\| \\
    & + \|  \langle \mu_g^*(x_1),\nabla_x g^c(x_2,y_g^*(x_2)) \rangle - \langle \mu_g^*(x_2),\nabla_x g^c(x_2,y_g^*(x_2)) \rangle\|\\
    \stackrel{(b)}{\leq} & (l_{g,1} +B_gl_{g^c,1})(\|x_1-x_2\|+\| y_g^*(x_1) -y_g^*(x_2)\| ) + l_{g^c,0}\|\mu_g^*(x_1)-\mu_g^*(x_2)\|\\
    \stackrel{(c)}{\leq}& ((l_{g,1} +B_gl_{g^c,1})(1+L_g)+l_{g^c,0}L_g)\|x_1-x_2\|,
\end{align*}
where $(a)$ follows triangle inequality; $(b)$ leverage on the Lipschitzness of $\nabla g$, $g^c$ and $\nabla g^c$ in $x$, and the upper bound for $\|\mu_g^*(x)\|$; and $(c)$ uses the Lipschitzness of $y_g^*(x)$ and $\mu_g^*(x)$ from Lemma \ref{lemma: Lipschitzness of y mu}.
As the bound is loose due to the use of triangle inequality, we can conclude that $v(x)$ is $l_{v,1}$-smooth where $l_{v,1} \leq ((1+B_g)(1+L_g)l_{g^c,1}+l_{g^c,0}L_g)$.
\end{proof}

\subsection{Proof of Lemma \ref{lemma: derivative of F}}

By assumption, $f(x,y)$ is $l_{f,1}$-smooth and $g(x,y)$ is $\alpha_g$-strongly convex in $y$. We know $f(x,y)+ \gamma (g(x,y)-v(x))$ is $(\gamma \alpha_g- l_{f,1})$-strongly convex when $\gamma > \frac{l_{f,1}}{\alpha_g}$ as discussed in \eqref{eq: strongly convex of penalty reformulation}. Moreover, constraint $g^c(x,y)$ is convex in $y$ by Assumption \ref{ass: convexity}. In this way, the problem
\begin{align*}
    \min_{ y \in \mathcal{Y}}   ~ f(x,y)+ \gamma (g(x,y)-v(x))~~~~~~
\text{s.t.} & ~ g^c(x,y)\leq 0
\end{align*}
features strong convexity according to Chapter 4 in \citep{ruszczynski2006nonlinear} and it equals  to
\begin{align*}
     F_{\gamma}(x) = \max_{\mu\in \mathbb{R}^{d_c}_+}\min_{ y \in \mathcal{Y}} & ~ f(x,y)+ \gamma (g(x,y)-v(x)) + \langle \mu, g^c(x,y)\rangle.
\end{align*}

Considering the smoothness of $v(x)$ as presented in Lemma \ref{lemma: derivative of v(x)}, 
all assumptions in Lemma \ref{lemma: value function derivative general} are satisfied. Therefore the derivative \eqref{eq: dF} can be obtained. In addition, for any $x_1,x_2\in \mathcal{X}$,
\begin{align*}
     & \|\nabla F(x_1) -\nabla F(x_2)\|\\
    = &\| \nabla_x f(x_1,y_F^*(x_1)) + \gamma\left(\nabla_x g(x_1, y_F^*(x_1)) - \nabla v(x_1) \right)+\langle \mu_F^*(x_1), \nabla_x g^c(x_1,y^*_F (x_1))
    \rangle\\
    & - \nabla_x f(x_2,y_F^*(x_2)) - \gamma\left(\nabla_x g(x_2, y_F^*(x_2)) - \nabla v(x_2) \right) - \langle \mu_F^*(x_2), \nabla_x g^c(x_2,y^*_F (x_2))
    \rangle\|\\
    \stackrel{(a)}{\leq} &\| \nabla_x f(x_1,y_F^*(x_1))- \nabla_x f(x_2,y_F^*(x_2))  \| + \gamma \| \nabla_x g(x_1, y_F^*(x_1)) - \nabla_x g(x_2, y_F^*(x_2))\| \\
    &+\gamma\|\nabla v(x_1)-\nabla v(x_2)\| + \| \langle \mu_F^*(x_1),\nabla_x g^c(x_1,y_F^*(x_1)) \rangle -  \langle \mu_F^*(x_1),\nabla_x g^c(x_2,y_F^*(x_2)) \rangle\| \\
    & + \|  \langle \mu_F^*(x_1),\nabla_x g^c(x_2,y_F^*(x_2)) \rangle - \langle \mu_F^*(x_2),\nabla_x g^c(x_2,y_F^*(x_2)) \rangle\|\\
    \stackrel{(b)}{\leq} & (l_{f,1} +\gamma l_{g,1}+ B_F l_{g^c,1}) (\| x_1-x_2\| + \|y_F^*(x_1)-y_F^*(x_2) \|) + \gamma l_{v,1}\|  x_1-x_2\| \\
    & + l_{g^c,0}\|\mu_F^*(x_1)-\mu_F^*(x_2)\|\\
    \stackrel{(c)}{\leq} & (
    (l_{f,1} +\gamma l_{g,1}+ B_F l_{g^c,1})(1+L_F) + \gamma l_{v,1}+l_{f^c,0}L_F
    )\|  x_1-x_2\|,
\end{align*}
where $(a)$ follows triangle inequality; $(b)$ leverage on the Lipschitzness of $\nabla f$, $\nabla g$, $g^c$ and $\nabla g^c$ in $x$, and the upper bound for $\|\mu_F^*(x)\|$; and $(c)$ uses the Lipschitzness of $y_F^*(x)$ and $\mu_F^*(x)$ from Lemma \ref{lemma: Lipschitzness of y mu}.
As the bound is loose due to the use of triangle equality, we can conclude that $F(x)$ is $l_{F,1}$-smooth where $l_{F,1} \leq (l_{f,1} +\gamma l_{g,1}+ B_F l_{g^c,1})(1+L_F) + \gamma l_{v,1}+l_{f^c,0}L_F $.

\section{Convergence Analysis of the Main Result}
\subsection{Proof of Theorem \ref{thm: main result}}
\label{appendix: proof of outer loop convergence}

     Define the bias term $ b(x_t)$ of the gradient $\nabla F_\gamma(x_t)$ as 
    \begin{align*}
        b(x_t)
        := & \nabla F_\gamma (x_t) - g_{F,t} \nonumber \\
        = & \bigg(
        \nabla_x f(x_t, y_F^*(x_t)) + \gamma \big(\nabla_x g(x, y_F^*(x_t)) - \big(\nabla_x g(x_t, y_{g}^*(x_t)) + \left\langle \mu_{g}^*(x_t), \nabla_x g^c(x_t,y_{g}^*(x_t)) \right\rangle \big) \big) \\
        & ~~~~ + \left\langle \mu_F^*(x_t), \nabla_x g^c(x_t,y_F^*(x_t)) \right\rangle \bigg)
         \\
        & - \biggl(
        \nabla_x f(x_t, y_{F,t}^{T_F}) + \gamma \left(\nabla_x g(x_t, y_{F,t}^{T_F}) - \left(\nabla_x g(x_t, y_{g,t}^{T_g}) + \left\langle \mu_{g,t}^{T_g}, \nabla_x g^c(x_t,y_{g,t}^{T_g}) \right \rangle  \right) \right) \\
        &~~~~ + \langle \mu_F^{T_F}, \nabla_x g^c(x_t,y_{F,t}^{T_F}) \rangle
        \biggr).
    \end{align*}

    In this way, we have
    \begin{align*}
        \| b(x_t) \| \stackrel{(a)}{\leq} &\|  \nabla_x f(x_t,y_{F,t}^{T_F})- \nabla_x f(x_t,y_{F}^*(x_t)) \|  \\
        & + \gamma 
        \biggl(\| \nabla_x g(x_t,y_{F,t}^{T_F})-\nabla_x g(x_t,y_{F}^*(x_t))\|+\| \nabla_x g(x_t,y_{g,t}^{T_g})-\nabla_x g(x_t,y_{g}^*(x_t))  \| \\
        &~~~~~~~~~~~~ +
        \left\| \left\langle \mu_{g}^*(x_t), \nabla_x g^c(x_t,y_{g}^*(x_t)) \right\rangle - \left\langle \mu_{g}^*(x_t), \nabla_x g^c(x_t,y_{g,t}^{T_g}) \right \rangle \right\| \\
        &~~~~~~~~~~~~ +
        \left\| \left\langle \mu_{g}^*(x_t), \nabla_x g^c(x_t,y_{g,t}^{T_g})  \right\rangle - \left\langle \mu_{g,t}^{T_g}, \nabla_x g^c(x_t,y_{g,t}^{T_g}) \right \rangle \right\|
        \biggr) \\
        & + \| \langle \mu_F^{T_F}, \nabla_x g^c(x_t,y_{F,t}^{T_F}) \rangle - \langle\mu_F^*(x_t), \nabla_x g^c(x_t,y_{F,t}^{T_F}) \rangle \| \\
        & + \|\langle \mu_F^*(x_t), \nabla_x g^c(x_t,y_{F,t}^{T_F})  \rangle - \langle \mu_F^*(x_t), \nabla_x g^c(x_t,y_F^*(x_t)) \rangle\|\\
        \stackrel{(b)}{\leq} & l_{f,1}\|y_{F,t}^{T_F}-y_{F}^*(x_t) \| +  \gamma \bigg(
        l_{g,1}\|y_{F,t}^{T_F}-y_{F}^*(x_t) \| + l_{g,1}\|y_{g,t}^{T_g}-y_{g}^*(x_t) \| \\
        & ~~~~~~~~~~~~ +l_{g^c,0}\|\mu_{g,t}^{T_g} -\mu_{g}^*(x_t) \| + B_g l_{g^c,1} \| y_{g,t}^{T_g}-y_g^*(x_t)\|
        \bigg)\\
        & + l_{g^c,0} \|\mu_{F,t}^{T_F}-\mu_{F}^*(x_t) \| + B_F l_{g^c,1}\|y_{F,t}^{T_F}-y_{F}^*(x_t) \|\\
        \stackrel{(c)}{=} & (l_{f,1}+\gamma l_{g,1} +  B_F l_{g^c,0} )\|y_{F,t}^{T_F}-y_{F}^*(x_t) \| +  l_{g^c,0} \|\mu_{F,t}^{T_F} -\mu_{F}^*(x_t)\| \\
        & +\gamma \left( (l_{g,1}+B_g l_{g^c,1})\| y_{g,t}^{T_g} - y_{g}^*(x_t)\|+l_{g^c,0}\|\mu_{g,t}^{T_g} -\mu_{g}^*(x_t) \| \right),
    \end{align*}
    where $(a)$ uses triangle inequality, $(b)$ relies on the Lipschitzness of $\nabla f$, $\nabla g$, $g^c$, and $\nabla g^c$ in $x$, the upper bounds for $\|\mu_F^*(x)\|$ and $\|\mu_g^*(x)\|$, and Cauchy-Schwartz inequality, and $(c)$ is by rearrangement. 
    
    Furthermore, according to Young's inequality, it follows that
    \begin{align*}\| b(x_t) \|^2 \leq & 2 \left(  (l_{f,1}+\gamma l_{g,1} + B_F l_{g^c,0} )\|y_{F,t}^{T_F}-y_{F,t}^* \| +  l_{g^c,0} \|\mu_{F,t}^{T_F} -\mu_{F,t}^*\| \right)^2 \\
        & + 2\gamma^2 \left( (l_{g,1}+B_g l_{g^c,1})\| y_{g,t}^{T_g} - y_{g}^*(x_t)\|+l_{g^c,0}\|\mu_{g,t}^{T_g} -\mu_{g}^*(x_t) \| \right)^2\\
        = & {\cal O}(\gamma^2 \epsilon_F+ \gamma^2 \epsilon_g).
    \end{align*}
    According to Lemma \ref{lemma: derivative of F}, $F_{\gamma}(x)$ is $l_{F,1}$-smooth in $\mathcal{X}$. In this way, by the smoothness, we have
    \begin{align}
        F(x_{t+1}) \leq & F(x_t)+\langle \nabla F(x_t), x_{t+1}-x_t \rangle + \frac{l_{F,1}}{2}\| x_{t+1}-x_t  \|^2 \nonumber \\
        \leq & F(x_t) + \langle  g_{F,t}, x_{t+1}-x_t \rangle + \frac{1}{2 \eta }\| x_{t+1}-x_t  \|^2  + \langle  b(x_t), x_{t+1}-x_t \rangle,
        \label{eq: smooth step 1}
    \end{align}
    where the second inequality is by $\eta \leq \frac{1}{l_{F,1}}$ and $\nabla F(x_t) = g_{F_t}+b(x_t)$. 
    
    The projection guarantees that $x_{t+1}$ and $x_t$ are in $\mathcal{X}$. Following Lemma \ref{lemma: projection gradient update inquality}, we know that 
    \begin{align*}
        \langle g_{F,t} , x_{t+1}-x_t \rangle \leq -\frac{1}{\eta} \| x_{t+1}-x_t\|^2.
    \end{align*}
    Plugging this back to \eqref{eq: smooth step 1}, it follows
    \begin{align*}
        F(x_{t+1}) \leq & F(x_t) - \frac{1}{2 \eta }\| x_{t+1}-x_t  \|^2  + \langle  b(x_t), x_{t+1}-x_t \rangle \\
         \leq & F(x_t) - \frac{1}{2 \eta }\| x_{t+1}-x_t  \|^2  + \eta \|  b(x_t)\|^2 +\frac{1}{4\eta}\|x_{t+1}-x_t \|^2\\
         = & F(x_t) - \frac{1}{4 \eta }\| x_{t+1}-x_t  \|^2  + \eta \|  b(x_t)\|^2, 
    \end{align*}
    where the second inequality is from Young's inequality. Telescoping therefore gives
    \begin{align*}
        \frac{1}{T} \sum_{t=0}^{T-1} \| G_\eta(x_t) \|^2 \leq & \frac{4}{\eta T} (F(x_0)-F(x_T) ) + \frac{4}{T} \sum_{t=0}^{T-1}\|  b(x_t)\|^2
        \\
        = & {\cal O}(\eta^{-1}T^{-1}) + O (\gamma^2 \epsilon_F+ \gamma^2 \epsilon_g)\\
        = &  {\cal O}(\gamma T^{-1} + \gamma^2 \epsilon_F+ \gamma^2 \epsilon_g)
    \end{align*}
    where last equality comes from $\eta = {\cal O} (\gamma^{-1})$ as $\eta \leq \frac{1}{l_{F,1}}$ and $l_{F,1} \leq (l_{f,1} +\gamma l_{g,1}+ B_F l_{g^c,1})(1+L_F) + \gamma l_{v,1}+l_{f^c,0}L_F = {\cal O}(\gamma)$. This completes the proof.

\subsection{Proof of Theorem \ref{thm: PDGM}}
\label{appendix: proof of accelarated}

To restate, we are viewing $L_g(\mu,y;x)$ in \eqref{eq: y_g,mu_g} and $L_F(\mu,y;x)$ in \eqref{eq: y_F,mu_F} respectively as $L(\mu,y)$ and considering the following max-min problem
\begin{align*}
    \max_{\mu \in \mathbb{R}^{d_c}_+} \min_{y \in \mathcal{Y}} L(\mu,y).
\end{align*}
In \eqref{eq: D mu}, we defined the minimization part as
\begin{align*}
    D(\mu) := \min_{y\in \mathcal{Y}}L(\mu,y) = L(\mu,y_\mu^*(\mu)) \quad \text{where} \quad y_\mu^*(\mu) := \arg\min_{y\in \mathcal{Y}} L(\mu,y).
\end{align*}
To evaluate $D(\mu)$ for $L_g(\mu,y;x)$ in \eqref{eq: y_g,mu_g} and $L_F(\mu,y;x)$ in \eqref{eq: y_F,mu_F} as $L(\mu,y)$, we define the following mappings for fixed $x\in \mathcal{X}$:
\begin{align}
    y^*_{\mu,g}(\mu):=& \arg\min_{y\in \mathcal{Y}} L_g(\mu,y ;x ), \label{eq: y star mu g}\\
    y^*_{\mu,F}(\mu):= & \arg\min_{y\in \mathcal{Y}} L_F(\mu,y ;x ). \label{eq: y star mu F}
\end{align}
In this way, for $L_g(\mu,y;x)$ in \eqref{eq: y_g,mu_g} and $L_F(\mu,y;x)$ in \eqref{eq: y_F,mu_F} respectively as $L(\mu,y)$, 
$D(\mu)$ defined in \eqref{eq: D mu} equals to $D_g(\mu)$ and $D_F(\mu)$ respectively, where
\begin{align}
    D_g(\mu) = & \min_{y\in \mathcal{Y}} L_g(\mu,y;x ) =  L_g(\mu,y^*_{\mu,g}(\mu);x ) , \label{eq: D mu g}\\
    D_F(\mu) = & \min_{y\in \mathcal{Y}} L_F(\mu,y;x ) = L_F(\mu,y^*_{\mu,F}(\mu);x ).\label{eq: D mu F}
\end{align}
In the following lemma, we show that $D(\mu)$ exhibits concavity and smoothness, which are favorable properties for conducting gradient-based algorithm.

\begin{lemma}[Smoothness and Concavity of $D(\mu)$] 
\label{lemma: smooth and conc of D}
Suppose all the assumptions in Theorem \ref{thm: PDGM} hold. For fixed $x\in \mathcal{X}$, the following holds
\begin{enumerate}
    \item  $y^*_{\mu,g}(\mu)$ in \eqref{eq: y star mu g} and $y^*_{\mu,F}(\mu)$ in \eqref{eq: y star mu F} are respectively $\frac{1}{\alpha_g}$ and $\frac{1}{\gamma \alpha_g-l_{f,1}}$-Lipschitz to $\mu$.
    \item $D_g(\mu)$ in \eqref{eq: D mu g} is concave and $\frac{l_{g^c,0}}{\alpha_g}$-smooth, and $D_F(\mu)$ in \eqref{eq: D mu F} is concave and $\frac{l_{g^c,0}}{\gamma \alpha_g-l_{f,1}}$-smooth.
\end{enumerate}
\end{lemma}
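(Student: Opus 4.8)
The plan is to treat the two claims separately, with statement~1 carrying essentially all the weight.

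Concavity of $D_g$ and $D_F$ is immediate: for fixed $x$, each of $\mu\mapsto L_g(\mu,y;x)$ and $\mu\mapsto L_F(\mu,y;x)$ is affine in $\mu$ (a $\mu$-independent term plus $\langle\mu,g^c(x,y)\rangle$), so $D_g(\mu)=\min_{y\in\mathcal{Y}}L_g(\mu,y;x)$ and $D_F(\mu)=\min_{y\in\mathcal{Y}}L_F(\mu,y;x)$ are pointwise infima over $y\in\mathcal{Y}$ of affine functions of $\mu$, hence concave. For smoothness I would use Danskin's theorem (already invoked in the excerpt): for fixed $x$ the inner objectives $L_g(\mu,\cdot;x)$ and $L_F(\mu,\cdot;x)$ are strongly convex in $y$ with unique minimizers over the closed convex set $\mathcal{Y}$ and are jointly smooth in $(\mu,y)$, so $\nabla D_g(\mu)=g^c(x,y^*_{\mu,g}(\mu))$ and $\nabla D_F(\mu)=g^c(x,y^*_{\mu,F}(\mu))$. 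Then, using that $g^c$ is $l_{g^c,0}$-Lipschitz in $y$ (Assumption~\ref{ass: lipschitz}),
\begin{equation*}
\|\nabla D_g(\mu_1)-\nabla D_g(\mu_2)\|\le l_{g^c,0}\,\|y^*_{\mu,g}(\mu_1)-y^*_{\mu,g}(\mu_2)\|,
\end{equation*}
and analogously for $D_F$, so the smoothness moduli follow at once from the Lipschitz bounds on $y^*_{\mu,g}(\cdot)$ and $y^*_{\mu,F}(\cdot)$ asserted in statement~1.

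For statement~1, I would first record the relevant strong convexity in $y$: $L_g(\mu,\cdot;x)=g(x,\cdot)+\langle\mu,g^c(x,\cdot)\rangle$ is $\alpha_g$-strongly convex for every $\mu\ge 0$ (the sum of the $\alpha_g$-strongly convex $g(x,\cdot)$ and the convex $\langle\mu,g^c(x,\cdot)\rangle$), while $L_F(\mu,\cdot;x)$ is $(\gamma\alpha_g-l_{f,1})$-strongly convex when $\gamma>l_{f,1}/\alpha_g$, exactly as in the bound \eqref{eq: strongly convex of penalty reformulation} established in the proof of Theorem~\ref{thm: lagrangian reformulation of penalty}. Then, for $\mu_1,\mu_2\in\mathbb{R}^{d_c}_+$ with $y_i:=y^*_{\mu,g}(\mu_i)$, I would write the first-order optimality (variational) inequalities over $\mathcal{Y}$ at $y_1$ and $y_2$, add them, and split $\nabla_y L_g(\mu_1,y_1;x)-\nabla_y L_g(\mu_2,y_2;x)$ into three terms: (i) $\nabla_y g(x,y_1)-\nabla_y g(x,y_2)$, which contributes $-\alpha_g\|y_1-y_2\|^2$ by strong convexity of $g(x,\cdot)$; (ii) the cross term $(\nabla_y g^c(x,y_1)-\nabla_y g^c(x,y_2))^\top\mu_1$, which has the favorable sign and can be discarded using componentwise convexity of $g^c(x,\cdot)$ together with $\mu_1\ge 0$; and (iii) the residual $(\nabla_y g^c(x,y_2))^\top(\mu_1-\mu_2)$, bounded by Cauchy--Schwarz and $\|\nabla_y g^c(x,y_2)\|\le l_{g^c,0}$. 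Rearranging produces $\alpha_g\|y_1-y_2\|\le l_{g^c,0}\|\mu_1-\mu_2\|$, giving the Lipschitz modulus of the order stated in statement~1 for $y^*_{\mu,g}$; the identical computation with modulus $\gamma\alpha_g-l_{f,1}$ in place of $\alpha_g$ gives the bound on $y^*_{\mu,F}$. Feeding these into the gradient identities above closes the smoothness part.

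The step I expect to be the crux is (ii)--(iii) of statement~1. Since $\mathcal{Y}$ may be a proper convex subset of $\mathbb{R}^{d_y}$, one cannot simply set $\nabla_y L=0$ and must argue through the pair of variational inequalities; and the genuinely delicate point is term (ii), which involves $\nabla_y g^c$ evaluated at two different points $y_1,y_2$ and is only controllable because the multiplier lies in the nonnegative orthant, so that componentwise convexity of $g^c$ can be exploited. One also has to verify the hypotheses behind the Danskin formula for $\nabla D$ (uniqueness of $y^*_{\mu,\cdot}$ via strong convexity --- Lemma~\ref{lemma: unique solution of SC} --- and joint smoothness of $L_g,L_F$), but this is routine under Assumption~\ref{ass: lipschitz}; the remainder is bookkeeping of Lipschitz constants.
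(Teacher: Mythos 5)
Your proposal is correct, and for the concavity and smoothness parts it follows essentially the same route as the paper: concavity as a pointwise infimum over $y$ of functions affine in $\mu$, the Danskin identity $\nabla D_g(\mu)=g^c(x,y^*_{\mu,g}(\mu))$ (and likewise for $D_F$), and then composition of the $l_{g^c,0}$-Lipschitzness of $g^c$ in $y$ with the Lipschitzness of the argmin maps. Where you genuinely diverge is statement~1, which is the crux: the paper obtains the Lipschitzness of $y^*_{\mu,g}(\cdot)$ and $y^*_{\mu,F}(\cdot)$ by directly citing external perturbation results (Theorem F.10 in \citep{dontchev2009implicit} or Theorem 4.47 in \citep{ito2008lagrange}), whereas you derive it from scratch by adding the two variational inequalities at $y_1,y_2$, extracting $\alpha_g\|y_1-y_2\|^2$ from the strong convexity of $g(x,\cdot)$, discarding the cross term via componentwise convexity of $g^c$ and $\mu_1\ge 0$, and bounding the residual by Cauchy--Schwarz. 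This argument is sound (including the careful handling of $\mathcal{Y}\subsetneq\mathbb{R}^{d_y}$ via variational inequalities rather than stationarity), and it has the virtue of being self-contained and of making visible exactly which assumptions enter. One quantitative point worth flagging: your derivation yields the modulus $l_{g^c,0}/\alpha_g$ (resp. $l_{g^c,0}/(\gamma\alpha_g-l_{f,1})$) for the argmin maps rather than the $1/\alpha_g$ stated in the lemma --- arguably your constant is the natural one, since the sensitivity of the minimizer to $\mu$ must scale with $\|\nabla_y g^c\|\le l_{g^c,0}$ --- and feeding it into the gradient identity gives a smoothness modulus of $l_{g^c,0}^2/\alpha_g$ rather than $l_{g^c,0}/\alpha_g$. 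This is a constant-level discrepancy with the lemma as written (and with the paper's own citation-based constant), not a gap in your reasoning, and it does not affect any of the downstream order-of-complexity claims.
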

\begin{proof}
To restate, 
\begin{align*}
    L_g(\mu,y ;x) = & g(x,y)+\langle \mu , g^c(x,y) \rangle,\\
    L_F(\mu,y ;x) = & f(x,y)+\gamma(g(x,y)-v(x))+\langle \mu , g^c(x,y)\rangle.
\end{align*}

Under Assumption \ref{ass: lipschitz} and \ref{ass: convexity}, for fixed $x$ and given $\mu$, $L_g(\mu,y;x)$ is $\alpha_g$-strongly convex and $(l_{g,1}+ \| \mu \|l_{g^c,1})$-smooth in $y$, 
and $L_F(\mu,y;x)$ is $(\gamma\alpha_g-l_{f,1})$-strongly convex and $(l_{f,1} + \gamma l_{g,1}+ \| \mu \| l_{g^c,1})$-smooth in $y$. Therefore, we know $y^*_{\mu,g}(\mu)$ in \eqref{eq: y star mu g} and $y^*_{\mu,F}(\mu)$ in \eqref{eq: y star mu F} are respectively $\frac{1}{\alpha_g}$ and $\frac{1}{\gamma \alpha_g-l_{f,1}}$-Lipschitz to $\mu$ by directly quoting Theorem F.10 in \citep{dontchev2009implicit} or Theorem 4.47 in \citep{ito2008lagrange}. This proves the first part of the Lemma.

For the second part,
the concavity of $D_g(\mu)$ and $D_F(\mu)$ can be directly obtained by Lemma 2.58 in \citep{ruszczynski2006nonlinear} as $L_g(\mu,y ;x)$ and $L_g(\mu,y ;x)$ are both convex in $y$. 

Moreover, following Theorem 4.24 in \citep{bonnans2013perturbation},
we have
\begin{align*}
    \nabla D_g(\mu) = &\nabla_\mu L_g(\mu, y^*_{\mu,g}(\mu)) =  g^c(x,y^*_{\mu,g}(\mu)),  \\
    \nabla D_F(\mu) = &\nabla_\mu L_F(\mu, y^*_{\mu,F}(\mu)) = g^c(x, y^*_{\mu,F}(\mu)). 
\end{align*}

As $g^c(x,y)$ is $l_{g^c,0}$-Lipschitz by Assumption \ref{ass: lipschitz}, for any $\mu_1, \mu_2 \in \mathbb{R}^{d_c}_+$:
\begin{align}
    \|\nabla D_g(\mu_1)-\nabla D_g(\mu_2)\| = &\|g^c(x,y^*_{\mu,g}(\mu_1))-g^c(x,y^*_{\mu,g}(\mu_2))\| \nonumber \\
    \leq & l_{g^c,0}\|y^*_{\mu,g}(\mu_1)-y^*_{\mu,g}(\mu_2)\|\leq \frac{l_{g^c,0}}{\alpha_g}\|\mu_1 -\mu_2\|,
    \label{eq: D_g mu smooth}
\end{align}
and similarly, 
\begin{align}
    \|\nabla D_F(\mu_1)-\nabla D_F(\mu_2)\| = &\|g^c(x, y^*_{\mu,F}(\mu_1) )-g^c(x,y^*_{\mu,F}(\mu_2) )\| \nonumber \\
    \leq & l_{g^c,0}\|y^*_{\mu,F}(\mu_1) -y^*_{\mu,F}(\mu_2) \|\leq \frac{l_{g^c,0}}{\gamma \alpha_g-l_{f,1}}\|\mu_1 -\mu_2\|.
    \label{eq: D_F mu smooth}
\end{align}
We can conclude that $D_g(\mu)$ and $D_F(\mu)$ are respectively $\frac{l_{g^c,0}}{\alpha_g}$ and $\frac{l_{g^c,0}}{\gamma \alpha_g-l_{f,1}}$-smooth.
\end{proof}

In Algorithm \ref{alg: Primal-Dual Gradient Method}, we are implementing an accelerated projected gradient descent on $-D(\mu)$ where the gradient bias is controlled by $T_y$. Following \citep{devolder2014first}, the following lemma
presents the convergence analysis of the accelerated method on smooth and convex functions.

\begin{lemma}[Section 5 and 6 in \citep{devolder2014first}]
\label{lemma: accelerated projected gradient convergence}
Suppose $D(\mu)$ is concave and $l_{D,1}$-smooth. Consider the constrained problem 
$\max_{\mu\in \mathbb{R}^{d_c}_+} D(\mu)$. At iteration $t=0,\dots,T-1$, perform accelerated projected gradient update with stepsize $\eta\leq \frac{1}{l_{D,1}}$ and initial value $\mu_0=\mu_{-1}$:
\begin{subequations}
    \begin{align}
    \mu_{t+\frac{1}{2}}= & \mu_t+\frac{t-1}{t+2}(\mu_t-\mu_{t-1}) \label{eq: acc proj t+0.5}\\
    \mu_{t+1} = & \operatorname{Proj}_{ \mathbb{R}^{d_c}_+}(\mu_{t+\frac{1}{2}} + \eta g_{t+\frac{1}{2}})
\end{align}
\end{subequations}
where $g_{t+\frac{1}{2}}$ is an $\epsilon^{1.5}$-approximate to $\nabla D(\mu_{t+\frac{1}{2}})$ satisfying $\| g_{t+\frac{1}{2}}\ - \nabla D(\mu_{t+\frac{1}{2}})\| = {\cal O}(\epsilon^{1.5})$, for a given accuracy $\epsilon>0$.
Denote $D^* = \max_{\mu\in \mathbb{R}^{d_c}_+} D(\mu) $, performing $T={\cal O}(\epsilon^{-0.5})$ iterations leads to 
\begin{align*}
    D^*-D(\mu_T) = {\cal O}(\epsilon). 
\end{align*}
\end{lemma}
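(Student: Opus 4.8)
The plan is to read the displayed recursion as Nesterov's accelerated (fast) projected-gradient method applied to the convex, $l_{D,1}$-smooth function $h:=-D$ over the closed convex cone $\mathbb{R}^{d_c}_+$, but driven by an \emph{inexact} first-order oracle: at the extrapolated point $\mu_{t+\frac12}$ the step uses $g_{t+\frac12}$ in place of $\nabla D(\mu_{t+\frac12})$, with $\|g_{t+\frac12}-\nabla D(\mu_{t+\frac12})\|\le\tilde\delta$ for $\tilde\delta={\cal O}(\epsilon^{1.5})$. Since $D$ is concave and $l_{D,1}$-smooth (in our application via Lemma \ref{lemma: smooth and conc of D}), $h$ is convex and smooth; its minimizer $\mu^\star=\arg\max_\mu D(\mu)$ is, in the concrete instances \eqref{eq: y_g,mu_g}/\eqref{eq: y_F,mu_F}, the unique Lagrange multiplier (Lemma \ref{lemma: unique solution of SC}), which has bounded norm ($B_g$, resp.\ $B_F$, from Lemmas \ref{lemma: derivative of v(x)}--\ref{lemma: derivative of F}), so $R:=\|\mu_0-\mu^\star\|<\infty$ despite $\mathbb{R}^{d_c}_+$ being unbounded. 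This finite $R$ is the only ``diameter''-type quantity that will enter the rate.

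First I would recast the $\ell_2$-accurate gradient as an inexact oracle in the sense of \citep{devolder2014first}: a one-line Young's-inequality bound, $|\langle g_{t+\frac12}-\nabla D(\mu_{t+\frac12}),\,\mu-\mu_{t+\frac12}\rangle|\le\tilde\delta\|\mu-\mu_{t+\frac12}\|\le\tfrac{l_{D,1}}{2}\|\mu-\mu_{t+\frac12}\|^2+\tfrac{\tilde\delta^2}{2l_{D,1}}$, shows that the perturbed linear model still lies between $-D$ and a quadratic upper bound up to an additive slack $\delta={\cal O}(\tilde\delta^2/l_{D,1})={\cal O}(\epsilon^{3})$, i.e.\ $(D(\mu_{t+\frac12}),g_{t+\frac12})$ is a $(\delta,2l_{D,1})$-oracle (the value being evaluated to comparable accuracy by the same inner solve). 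Then I would invoke the convergence theorem for the fast gradient method under a $(\delta,L)$-oracle (Sections 5--6 of \citep{devolder2014first}) with $L=2l_{D,1}$, which gives, after $T$ iterations,
\begin{align*}
    D^\star-D(\mu_T)=h(\mu_T)-h^\star\;\le\;\frac{C_1\,l_{D,1}R^2}{(T+1)^2}\;+\;C_2\,(T+3)\,\delta
\end{align*}
for absolute constants $C_1,C_2$ -- the first term the usual accelerated ${\cal O}(1/T^2)$ term, the second the \emph{linearly growing} error accumulation characteristic of accelerated schemes. Taking $T={\cal O}(\epsilon^{-0.5})$ makes the first term ${\cal O}(l_{D,1}R^2\epsilon)={\cal O}(\epsilon)$ and the second ${\cal O}(T\delta)={\cal O}(\epsilon^{-0.5}\!\cdot\!\epsilon^{3})={\cal O}(\epsilon^{2.5})={\cal O}(\epsilon)$, so their sum is ${\cal O}(\epsilon)$, as claimed. (Even the crude bound $\delta={\cal O}(\tilde\delta)={\cal O}(\epsilon^{1.5})$ would still give $T\delta={\cal O}(\epsilon)$, so the conclusion is robust to how tightly one accounts for the gradient error.)

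The step I expect to be the main obstacle is controlling this accumulated oracle error: unlike plain projected gradient, whose noise term is ${\cal O}(\delta)$ and hence trivially harmless, the accelerated method's noise term scales with the iteration count, so one must confirm that the prescribed pairing $T={\cal O}(\epsilon^{-0.5})$, $\tilde\delta={\cal O}(\epsilon^{1.5})$ keeps $T\delta$ (equivalently $T\tilde\delta^2/l_{D,1}$, or $T\tilde\delta R$ under an additive-noise accounting) at the ${\cal O}(\epsilon)$ level -- and, relatedly, that $R$ is genuinely finite, which is exactly where boundedness of the dual optima $\mu_g^\star(x),\mu_F^\star(x)$ is used. Everything else is bookkeeping of constants and translating the accelerated-method guarantee for $-D$ back to the stated guarantee for $D$.
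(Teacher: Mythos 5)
The paper does not prove this lemma itself; it is quoted directly from Sections 5--6 of Devolder, Glineur, and Nesterov (2014), and your proposal is a faithful unpacking of exactly that result: converting the $\ell_2$-bounded gradient error into a $(\delta,L)$-oracle and invoking the fast-gradient-method rate $\mathcal{O}\big(LR^2/T^2 + T\delta\big)$, then checking that $T=\mathcal{O}(\epsilon^{-0.5})$ and $\tilde\delta=\mathcal{O}(\epsilon^{1.5})$ keep both terms at $\mathcal{O}(\epsilon)$. Your bookkeeping (including the observation that finiteness of $R$ rests on the boundedness of the optimal multipliers) is correct and consistent with how the paper uses the lemma.
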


In this way, we are ready to proceed to the
\textbf{proof of Theorem \ref{thm: PDGM}}. 

\begin{proof}[proof of Theorem \ref{thm: PDGM}]
Algorithm \ref{alg: Primal-Dual Gradient Method} solves \eqref{eq: y_g,mu_g} and \eqref{eq: y_F,mu_F} by taking $L_g(\mu,y;x)$ and $L_F(\mu,y;x)$ respectively as $L(\mu,y)$ and run respectively iterations $T$ equals $T_g$ and $T_F$ 

We begin our proof with analysis on \eqref{eq: y_g,mu_g}. The accelerated version of Algorithm \ref{alg: Primal-Dual Gradient Method} solves this by taking $L_g(\mu,y;x)$ in \eqref{eq: y_g,mu_g} with fixed $x\in \mathcal{X}$ as $L(\mu,y)$.

Fixing $\mu_{t+\frac{1}{2}}$, steps \ref{step: y update begin}-\ref{step: y update end} are $T_y$-step projected gradient descent in $y$. As $L_g(\mu,y;x)$ is $(l_{g,1}+l_{g^c,1})$-smooth and $\alpha_g$-strongly convex in $y$, taking the inner loop stepsize $\eta_1$ as $\eta_{g,1} \leq \frac{1}{l_{g,1}+l_{g^c,1}}$ ensures linear convergence according to Lemma \ref{lemma: bubeck2015convex}. Choosing $T_y = {\cal O}(\ln((\epsilon^{1.5})^{-1}))={\cal O}(\ln(\epsilon_g^{-1}))$ leads to 
\begin{align*}
    \|y_{t+1}-y^*_{\mu,g}(\mu_{t+\frac{1}{2}}) \| ={\cal O}(\epsilon_g^{1.5})
\end{align*}
for target accuracy $\epsilon_g>0$ where $y^*_{\mu,g}(\mu)$ is defined in \eqref{eq: y star mu g}. 

In step \ref{step: mu update}, we update $\mu$ with a projected gradient descent step which takes $\nabla_\mu L(\mu_{t+\frac{1}{2}},y_{t+1}) = g^c(x,y_{t+1})$ as an estimate of $\nabla D_g(\mu_{t+\frac{1}{2}})=g^c(x,y^*_{\mu,g})$. The estimation bias is bounded by
\begin{align*}
    \|\nabla_\mu L(\mu_{t+\frac{1}{2}},y_{t+1})-\nabla D_g(\mu_{t+\frac{1}{2}})\| =&\|g^c(x,y_{t+1})-g^c(x,y^*_{\mu,g}(\mu_{t+\frac{1}{2}}))\| \\
    \leq & l_{g^c,0} \|y_{t+1}-y^*_{\mu,g}(\mu_{t+\frac{1}{2}}) \| ={\cal O}(\epsilon_g^{1.5}).
\end{align*}

By Lemma \ref{lemma: accelerated projected gradient convergence}, we can conclude the 
complexity is $\tilde{\cal O}(\epsilon_g^{-0.5})$ for conducting the accelerated version of Algorithm \ref{alg: Primal-Dual Gradient Method} on $L_g(\mu,y;x)$ in \eqref{eq: y_g,mu_g} as $L(\mu,y)$ to achieve
\begin{align}
    D_g(\mu^*_g(x) )-D_g(\mu_{T_g} ) = {\cal O}(\epsilon_g).
    \label{eq: D-D mu g star bound}
\end{align}

Similarly, the complexity of the accelerated version of Algorithm \ref{alg: Primal-Dual Gradient Method} for \eqref{eq: y_F,mu_F} is $\tilde{\cal O}(\epsilon_F^{-0.5})$ , i.e.,
\begin{align}
    D_F(\mu^*_F(x))-D_F(\mu_{T_F}) = {\cal O}(\epsilon_F).
    \label{eq: D-D mu F star bound}
\end{align}

In the following, we are going to show that \eqref{eq: D-D mu g star bound} and \eqref{eq: D-D mu F star bound} and respectively sufficient to bound $\|\mu_{T_g}-\mu^*_g(x)\|$ and  $\|\mu_{T_F}-\mu^*_F(x)\|$ considering Assumption \ref{ass: local RSI} is satisfied.

As $D_g(\mu)$ and $D_F(\mu)$ are both concave in $\mu$ and $\mu \in \mathbb{R}^{d_c}_+$ is equivalent to $\mu \geq 0$, the problems
\begin{align*}
    \max_{\mu \in \mathbb{R}^{d_c}_+} D_g(\mu) \quad \text{and} \quad \max_{\mu \in \mathbb{R}^{d_c}_+} D_F(\mu)
\end{align*}
are respectively equivalent to the unconstrained problems 
\begin{align*}
    \max_{\mu  \in \mathbb{R}^{d_c}}  \tilde{D}_g(\mu) := D_g(\mu) + \lambda_g^\top \mu  \quad \text{and} \quad  \max_{\mu  \in \mathbb{R}^{d_c}}  \tilde{D}_F(\mu) := D_F(\mu) + \lambda_F^\top \mu 
\end{align*}
with the Lagrange multipliers $\lambda_g,\lambda_F$ being non-negative and finite in all dimension, i.e. $0\leq \lambda_g < \infty $, $0\leq \lambda_F < \infty $ according to Lagrange duality Theorem.
Moreover, it is well known (Chapter 4 in \citep{ruszczynski2006nonlinear}) that the Largrangian terms respectively equal zero when the problems attain the optimals. i.e.
\begin{align}
    \lambda_g^\top \mu^*_g(x)=0 \quad \text{and} \quad \lambda_F^\top \mu^*_F(x)=0. \label{eq: lagrange term zero condition}
\end{align}

Moreoever, the first-order stationary condition requires $\nabla \tilde{D}_g (\mu_g^*(x)) = \nabla D_g (\mu_g^*(x)) +\lambda_g = 0$ and $\nabla \tilde{D}_F (\mu_F^*(x)) = \nabla D_F (\mu_F^*(x)) +\lambda_F = 0$ and therefore
\begin{align}
    \nabla D_g (\mu_g^*(x)) = - \lambda_g \quad \text{and} \quad \nabla D_F (\mu_F^*(x)) = - \lambda_F.
    \label{eq: D g F mu stationary condition}
\end{align}

In this way, for all $\mu \in \mathcal{B}(\mu_g^*(x);\delta_g)\cap \mathbb{R}^{d_c}_+$.
\begin{align*}
    D_g(\mu_g^*(x))-D_g(\mu) = & \int_{\tau = 0}^1 \langle \nabla D_g(\mu +\tau (\mu_g^*(x)-\mu)),\mu_g^*(x)-\mu \rangle d\tau\\
    = & \int_{\tau = 0}^1 \frac{1}{\tau} \langle \nabla D_g(\mu_g^*(x)) -D_g(\mu +\tau (\mu_g^*(x)-\mu)),\tau(\mu-\mu_g^*(x))\rangle d\tau\\
    & - \langle  \nabla  D_g(\mu_g^*(x)),\mu-\mu_g^*(x) \rangle \\
    \stackrel{(a)}{\geq}& \int_{0}^1 C_{\delta_g} \| \mu-\mu_g^*(x)\|^2\tau d\tau  - \langle  \nabla  D_g(\mu_g^*(x)),\mu-\mu_g^*(x) \rangle \\
    \stackrel{(b)}{=} & \frac{C_{\delta_g}}{2}\| \mu-\mu_g^*(x)\|^2 +\langle\lambda_g,\mu-\mu_g^*(x)\rangle \\
    \stackrel{(c)}{\geq} & \frac{C_{\delta_g }}{2}\| \mu-\mu_g^*(x)\|^2 ,
\end{align*}
where $(a)$ uses \eqref{eq: RSI g} in Assumption \ref{ass: local RSI} and the fact that the $\mu,\mu_g^*(x)\in \mathcal{B}(\mu_g^*(x);\delta_g)\cap \mathbb{R}^{d_c}_+$ implies $\mu +\tau (\mu_g^*(x)-\mu) \in \mathcal{B}(\mu_g^*(x);\delta_g)\cap \mathbb{R}^{d_c}_+$; $(b)$ solves the integral and uses $\lambda_g = -\nabla D_g(\mu_g^*(x))$ in \eqref{eq: D g F mu stationary condition}; and $(c)$ follows from the fact that $\langle \lambda,\mu_g^*(x)\rangle=0$ in 
\eqref{eq: lagrange term zero condition} and $\mu,\lambda_g \geq 0$.

Analogously, for all $\mu \in \mathcal{B}(\mu_F^*(x);\delta_F)\cap \mathbb{R}^{d_c}_+$, it follows that
\begin{align*}
    D_F(\mu_F^*(x))-D_F(\mu) \geq & \frac{C_{\delta_F }}{2}\| \mu-\mu_F^*(x)\|^2.
\end{align*}

In this way, for arbitrary $\epsilon_g < \frac{C_{\delta_g}}{2}\delta_g$, the complexity of Algorithm \ref{alg: Primal-Dual Gradient Method} to solve \eqref{eq: y_g,mu_g} is $\tilde{\cal O}(\epsilon_F^{-0.5})$, i.e.,
\begin{align*}
    \| \mu_{T_g}-\mu_g^*(x)\|^2  = & ~ {\cal O}(\epsilon_g), \\
    \text{and } \| y_{T_g}-y_g^*(x)\|^2\leq&  \| y_{T_g}-y_g^*(\mu_{T_g};x)\|^2+\| \mu_{T_g}-\mu_g^*(x)\|^2 \\
    \leq &(1/\alpha_g+1)\| \mu_{T_g}-\mu_g^*(x)\|^2 = {\cal O}(\epsilon_g).
\end{align*}

At each iteration $t$ in Algorithm \ref{alg: BLOCC}, $x=x_t$, and the output $(y_{T_g},\mu_{T_g})$ is chosen as $(y_{g,t}^{T_g},\mu_{g,t}^{T_g})$.

Similarly, to solve \eqref{eq: y_F,mu_F}, for arbitrary $\epsilon_g < \frac{C_{\delta_g}}{2}\delta_g$, applying Algorithm \ref{alg: Primal-Dual Gradient Method} with complexity $\tilde{\cal O}(\epsilon_F^{-0.5})$ to achieve \eqref{eq: D-D mu F star bound} can achieve 
\begin{align*}
    \| \mu_{T_F}-\mu_F^*(x)\|^2 = & ~ {\cal O}(\epsilon_F),\\
    \text{and } \| y_{T_F}-y_F^*(x)\|^2\leq&  \| y_{T_F}-y_F^*(\mu_{T_F};x)\|^2+\| \mu_{T_F}-\mu_F^*(x)\|^2 \\
    \leq &(1/\alpha_F+1)\| \mu_{T_F}-\mu_F^*(x)\|^2 = {\cal O}(\epsilon_F).
\end{align*}
At each iteration $t$ in Algorithm \ref{alg: BLOCC}, $x=x_t$, and the output $(y_{T_F},\mu_{T_F})$ is chosen as $(y_{F,t}^{T_F},\mu_{F,t}^{T_F})$.

This completes the proof.
\end{proof}

\begin{remark}
\label{rmk: stepsize 1}
    Under the same assumptions as in Theorem \ref{thm: PDGM}, we cam choose $\eta_{g,1} \leq (l_{g,1} + l_{g^c,1})^{-1}$, $\eta_{g,2} \leq \frac{\alpha_g}{l_{g^c,0}}$ as stepsizes for running the accelerated version of Algorithm \ref{alg: Primal-Dual Gradient Method} to solve \eqref{eq: y_g,mu_g}, and $\eta_{F,1} \leq (l_{f,1} + \gamma l_{g,1} + l_{g^c,1})^{-1}$, $\eta_{F,2} \leq \frac{\gamma \alpha_g - l_{f,1}}{l_{g^c,0}}$ as the ones for \eqref{eq: y_F,mu_F}.
\end{remark}

\subsection{Proof of Theorem \ref{thm: linear convergence_PDGM}}
\label{appendix: proof of linear convergence}

In this section, we consider %Restricted Secant Inequality (RSI) condition is when 
\begin{align}
    g^c(x,y)=g^c_1(x)^\top y -g^c_2(x)
\end{align}
being affine in $y$, and $\mathcal{Y}=\mathbb{R}^{d_y}$.

Therefore, for a fixed $x$, taking either $L_g(\mu,y;x)$ in \eqref{eq: y_g,mu_g} or $L_F(\mu,y;x)$ in \eqref{eq: y_F,mu_F} as $L(\mu,y)$ fits into a special case of \textit{strongly-convex-concave saddle point problems} in the following form:
\begin{align}
    \max_{\mu \in \mathbb{R}^{d_c}_+} \min_{y\in \mathbb{R}^{d_y}} L(\mu,y) =  -h_1(\mu)+y^{\top} A \mu + h_2(y)
    \label{eq: conv conc linear}
\end{align}
where $h_1(\mu)$ is smooth and linear (concave) in $\mu$ and $h_2(y)$ is smooth and strongly convex in $y$. Specifically,
for $L_g(\mu,y;x)$ as $L(\mu,y)$, $h_1(\mu) = \langle g^c_2(x),\mu \rangle$ is $0$-smooth and linear (concave), $A = g^c_1(x)$, and $h_2(y) = g(x,y)$ is $l_{g,1}$-smooth and $\alpha_g$-strongly convex. 

In this context, performing PGD on $L(\mu, y)$ on $y$ is equivalent to a gradient descent step since $\mathcal{Y} = \mathbb{R}^{d_y}$. The following lemma summarizes the error of $\|y_t-y_{\mu}^*(\mu_t)\|$ where $y_{\mu}^*(\mu)$ is defined in \eqref{eq: D mu} and the update of $\mu_{t+1}$ and $y_{t+1}$ is the non-accelerated version of Algorithm \ref{alg: Primal-Dual Gradient Method} with $T_y=1$.
\begin{lemma}[Update of $\|y_t-\nabla h_2^*(-A \mu_t)\|$] 
\label{lemma: update of y_t - y^*}
Consider the problem \eqref{eq: conv conc linear} where $A$ is of full column rank and $h_2(y)$ is $\alpha_{h_2}$-strongly convex and $l_{f,1}$-smooth. 
${y}^*_{\mu}(\mu)$ defined in \eqref{eq: D mu} satisfies
\begin{align}\label{eq: solution to y star mu}
    {y}^*_{\mu}(\mu) = \nabla h_2^*(-A\mu)
\end{align}
where $h_2^*(y)$ is the conjugate function of  $h_2(y)$ by Definition \ref{def: conjugate function}. At iteration $t$, $\mu_t,y_t$ are known, and conduct $y_{t+1} = y_t - \eta_1 \nabla_y L(\mu_t,y_t)$, a gradient descent step for $L(\mu_t,y)$ in $y$. This gives
\begin{align}
    \| y_{t+1}-\nabla h_2^*(-A\mu_t)\| \leq (1-\eta_1 \alpha_{h_2}/2)\| y_{t}-\nabla h_2^*(-A\mu_t)\|
    \label{eq: contraction of y}
\end{align}
when $\eta_1 \leq \frac{1}{l_{h_2,1}}$. Additionally, given another $\mu_{t+1}$, we know
    \begin{align}
        \| y_{t+1}-\nabla h_2^*(-A\mu_{t+1})\|
        \leq &(1-\eta_1 \alpha_{h_2}/2)\| y_{t}-\nabla h_2^*(-A\mu_t)\|+ \frac{\sigma_{\max}(A)}{\alpha_{h_2}}
        \|\mu_{t+1}-\mu_t\|.
        \label{eq: bound y_t-y^*}
    \end{align}
\end{lemma}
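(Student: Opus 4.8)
The plan is to prove the three assertions of the lemma in order: the closed form \eqref{eq: solution to y star mu} for ${y}^*_{\mu}(\mu)$, then the one-step contraction \eqref{eq: contraction of y}, and finally the perturbed estimate \eqref{eq: bound y_t-y^*}.

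First I would establish \eqref{eq: solution to y star mu}. Since the term $-h_1(\mu)$ in \eqref{eq: conv conc linear} does not depend on $y$, minimizing $L(\mu,y)$ over $y\in\mathbb{R}^{d_y}$ is the same as minimizing $\phi_\mu(y):=y^\top A\mu+h_2(y)$, which is $\alpha_{h_2}$-strongly convex (the bilinear term is linear in $y$) and therefore has the unique minimizer characterized by $\nabla\phi_\mu(y)=A\mu+\nabla h_2(y)=0$, i.e.\ $\nabla h_2(y)=-A\mu$. Because $h_2$ is $\alpha_{h_2}$-strongly convex and $l_{h_2,1}$-smooth on all of $\mathbb{R}^{d_y}$, Lemma~\ref{lemma: conjugate SC and smooth} (part~1, with $\mathcal{Q}=\mathbb{R}^{d_y}$; recall strong convexity forces $\mathcal{Q}^*=\mathbb{R}^{d_y}$) shows that $\nabla h_2$ is a bijection whose inverse is $\nabla h_2^*$, with $h_2^*$ the conjugate of Definition~\ref{def: conjugate function}. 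Applying $\nabla h_2^*$ to both sides of $\nabla h_2(y)=-A\mu$ gives ${y}^*_{\mu}(\mu)=\nabla h_2^*(-A\mu)$, which is \eqref{eq: solution to y star mu}; in particular ${y}^*_{\mu}$ defined in \eqref{eq: D mu} is well defined and unique.

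Next I would prove \eqref{eq: contraction of y}. For fixed $\mu_t$, the map $y\mapsto L(\mu_t,y)$ is $\alpha_{h_2}$-strongly convex and $l_{h_2,1}$-smooth with unique minimizer ${y}^*_{\mu}(\mu_t)=\nabla h_2^*(-A\mu_t)$. Since $\mathcal{Y}=\mathbb{R}^{d_y}$, the update $y_{t+1}=y_t-\eta_1\nabla_y L(\mu_t,y_t)$ is exactly a projected gradient step on this function (the projection being the identity), so Lemma~\ref{lemma: bubeck2015convex} applies with $\eta_1\le 1/l_{h_2,1}$ and yields $\|y_{t+1}-{y}^*_{\mu}(\mu_t)\|\le(1-\eta_1\alpha_{h_2})^{1/2}\|y_t-{y}^*_{\mu}(\mu_t)\|\le(1-\eta_1\alpha_{h_2}/2)\|y_t-{y}^*_{\mu}(\mu_t)\|$. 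Substituting the closed form ${y}^*_{\mu}(\mu_t)=\nabla h_2^*(-A\mu_t)$ yields \eqref{eq: contraction of y}. For \eqref{eq: bound y_t-y^*} I would then insert $\nabla h_2^*(-A\mu_t)$ by the triangle inequality, $\|y_{t+1}-\nabla h_2^*(-A\mu_{t+1})\|\le\|y_{t+1}-\nabla h_2^*(-A\mu_t)\|+\|\nabla h_2^*(-A\mu_t)-\nabla h_2^*(-A\mu_{t+1})\|$; the first term is bounded by \eqref{eq: contraction of y}, and for the second I would use that by Lemma~\ref{lemma: conjugate SC and smooth} the conjugate $h_2^*$ is $\tfrac{1}{\alpha_{h_2}}$-smooth, i.e.\ $\nabla h_2^*$ is $\tfrac{1}{\alpha_{h_2}}$-Lipschitz, so $\|\nabla h_2^*(-A\mu_t)-\nabla h_2^*(-A\mu_{t+1})\|\le\tfrac{1}{\alpha_{h_2}}\|A(\mu_t-\mu_{t+1})\|\le\tfrac{\sigma_{\max}(A)}{\alpha_{h_2}}\|\mu_t-\mu_{t+1}\|$. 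Adding the two bounds gives \eqref{eq: bound y_t-y^*}.

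There is no deep obstacle here; the statement is essentially a bookkeeping result, and the only points requiring care are (i) confirming that both the conjugate-gradient inverse relation and the $\tfrac{1}{\alpha_{h_2}}$-smoothness of $h_2^*$ hold with domain equal to all of $\mathbb{R}^{d_y}$, which is precisely why the strong convexity of $h_2$ (forcing $\mathcal{Q}^*=\mathbb{R}^{d_y}$) must be invoked, and (ii) recognizing that the $y$-update with $\mathcal{Y}=\mathbb{R}^{d_y}$ is an unconstrained gradient step, so the linear-convergence lemma for PGD on strongly convex smooth functions (Lemma~\ref{lemma: bubeck2015convex}) applies verbatim. Note that the full-column-rank hypothesis on $A$ is not needed for this particular lemma — it enters only later, when establishing strong concavity of the dual $D(\mu)$ — so I would not use it in this proof.
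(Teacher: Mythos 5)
Your proof is correct and follows essentially the same route as the paper's: the first-order condition $\nabla h_2(y^*_\mu)=-A\mu$ inverted via Lemma~\ref{lemma: conjugate SC and smooth}, the linear-contraction Lemma~\ref{lemma: bubeck2015convex} for the gradient step, and the triangle inequality combined with the $\tfrac{1}{\alpha_{h_2}}$-smoothness of $h_2^*$ for the perturbed bound. Your side observation that the full-column-rank assumption on $A$ is not actually used in this lemma (it only enters later, in Lemma~\ref{lemma: D(mu)}) is also accurate.
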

\begin{proof}
    Recall the definition ${y}^*_\mu (\mu) = \arg \min_{y} L(\mu,y)$ following \eqref{eq: D mu}. The first-order stationary optimality condition requires that for any given $\mu$, it holds
    \begin{align*}
        \nabla_y L(\mu,{y}^*_{\mu}) =A\mu +\nabla {h}_2({y}^*_{\mu}) = 0 \quad \Leftrightarrow \quad \nabla {h}_2({y}^*_{\mu}) = - A \mu.
    \end{align*}
    As the mapping $\nabla h_2$ and $\nabla h_2^*$ are the inverse of each other according to Lemma \ref{lemma: conjugate SC and smooth}, for any $\mu$:
    \begin{align*}
        {y}^*_{\mu} = \nabla h_2^*(-A \mu) .
    \end{align*}
    
    At iteration $t$, conducting a gradient descent step on $L(\mu_t,y)$ gives $y_{t+1} = y_t - \eta_1 \nabla_y L(\mu_t,y_t)$. As $L(\mu_t,y)$ is $\alpha_{h_2}$-strongly convex and $l_{h_2,1}$-smooth in $y$, following Lemma \ref{lemma: bubeck2015convex}, take $\eta_1 \leq \frac{1}{l_{h_2,1}}$, we have
    \begin{align*}
        \| y_{t+1}-\nabla h_2^*(-A\mu_t)\| \leq (1-\eta_1 \alpha_{h_2}/2)\| y_{t}-\nabla h_2^*(-A\mu_t)\|.
    \end{align*}

    Following triangle inequality, we also have 
    \begin{align}
        &\| y_{t+1}-\nabla h_2^*(-A\mu_{t+1})\| \nonumber \\ 
        \leq & \| y_{t+1}-\nabla h_2^*(-A\mu_t)\| + \|\nabla h_2^*(-A\mu_t) -\nabla h_2^*(-A\mu_{t+1})
        \| \nonumber \\
        \leq &(1-\eta_1 \alpha_{h_2}/2)\| y_{t}-\nabla h_2^*(-A\mu_t)\|+ \frac{\sigma_{\max}(A)}{\alpha_{h_2}}
        \|\mu_{t+1}-\mu_t\|
    \end{align}
    where the second term comes from the smoothness of the conjugate function (see Lemma \ref{lemma: conjugate SC and smooth}).
\end{proof}

In \eqref{eq: bound y_t-y^*}, the update behavior of $ \| y_t-\nabla h_2^*(-A\mu_t) \| $ depends on $ \|\mu_{t+1}-\mu_t\|$. Therefore, we are interested in the the update behavior of $\|\mu_{t+1}-\mu_t\|$ where $\mu_{t+1}=\operatorname{Proj}_{\mathbb{R}^{d_c}}(\mu_t + \eta \nabla_\mu L(\mu_t,y_{t+1}))$ as in the non-accelerated version of Algorithm \ref{alg: Primal-Dual Gradient Method} with $T_y = 1$.
Before proceeding, we would like to look into the properties of $D(\mu)$ defined in \eqref{eq: D mu} under the setting \eqref{eq: conv conc linear}:
\begin{align} 
    D(\mu) = \min_{y} -h_1(\mu) + \langle y, A\mu \rangle +h_2(y)
    \label{eq: D mu linear}
\end{align}
where $h_1(\mu)$ is smooth and linear (concave) in $\mu$, $h_2(y)$ is smooth and strongly convex in $y$, and $A$ is of full rank in column. The next lemma shows that $D(\mu)$ features strong concavity and smoothness.

\begin{lemma}[Smoothness and strongly concavity of $D(\mu)$]
\label{lemma: D(mu)}
    Suppose $h_1$ is concave and $l_{h_1,1}$-smooth, $h_2$ is $\alpha_{h_2}$-strongly convex and $l_{h_2,1}$-smooth, and $A$ is full column rank. Then $D(\mu)$ in  \eqref{eq: D mu linear} satisfies
    \begin{align*}
        D(\mu) = - h_1(\mu) - h_2^*(-A \mu),
    \end{align*}
    and is $\frac{\sigma^2_{\min}(A)}{l_{h_2,1}}$-strongly concave and $(l_{h_1,1} + \frac{\sigma^2_{\max}(A)}{\alpha_{h_2}})$-smooth with respect to $\mu$.
\end{lemma}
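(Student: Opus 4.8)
The plan is to first collapse the inner minimization in \eqref{eq: D mu linear} into a Fenchel conjugate, obtaining the claimed closed form for $D$, and then read off the two curvature constants by combining Lemma~\ref{lemma: conjugate SC and smooth} with the standard rules for how smoothness and strong convexity behave under composition with a linear map. For the closed form: since $h_2$ is strongly convex on all of $\mathbb{R}^{d_y}$ it is bounded below, so (by the Remark after Definition~\ref{def: conjugate function}) its Legendre conjugate $h_2^*$ is finite on all of $\mathbb{R}^{d_y}$, and by Definition~\ref{def: conjugate function}
\[
\min_{y\in\mathbb{R}^{d_y}}\{\langle y, A\mu\rangle + h_2(y)\} = -\,h_2^*(-A\mu).
\]
Since the term $-h_1(\mu)$ does not depend on $y$, this yields $D(\mu) = -h_1(\mu) - h_2^*(-A\mu)$, which is the first claim.

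Next I would bound the curvature of $\mu\mapsto h_2^*(-A\mu)$. Part~1 of Lemma~\ref{lemma: conjugate SC and smooth} applies because $h_2$ has full domain and is $l_{h_2,1}$-smooth and $\alpha_{h_2}$-strongly convex, so $h_2^*$ is $\tfrac{1}{\alpha_{h_2}}$-smooth and $\tfrac{1}{l_{h_2,1}}$-strongly convex on $\mathbb{R}^{d_y}$. Writing $\psi(\mu) := h_2^*(-A\mu)$, the chain rule gives $\nabla\psi(\mu) = -A^\top\nabla h_2^*(-A\mu)$. Estimating $\|\nabla\psi(\mu_1)-\nabla\psi(\mu_2)\|$ with $\|A^\top\|=\|A\|=\sigma_{\max}(A)$ and the $\tfrac{1}{\alpha_{h_2}}$-smoothness of $h_2^*$ shows $\psi$ is $\tfrac{\sigma_{\max}^2(A)}{\alpha_{h_2}}$-smooth; pairing $\nabla\psi(\mu_1)-\nabla\psi(\mu_2)$ with $\mu_1-\mu_2$, using the $\tfrac{1}{l_{h_2,1}}$-strong convexity of $h_2^*$ together with the full-column-rank inequality $\|Av\|^2\ge\sigma_{\min}^2(A)\|v\|^2$, shows $\psi$ is $\tfrac{\sigma_{\min}^2(A)}{l_{h_2,1}}$-strongly convex. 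Hence $-\psi$ is $\tfrac{\sigma_{\min}^2(A)}{l_{h_2,1}}$-strongly concave and $\tfrac{\sigma_{\max}^2(A)}{\alpha_{h_2}}$-smooth.

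Finally, I would fold in $-h_1$. In the setting of \eqref{eq: conv conc linear} the term $h_1$ is affine in $\mu$, so $-h_1$ has a constant gradient and contributes no curvature in either direction: adding it to $-\psi$ leaves the strong-concavity modulus $\tfrac{\sigma_{\min}^2(A)}{l_{h_2,1}}$ untouched, and since its gradient is trivially $l_{h_1,1}$-Lipschitz (with constant $0\le l_{h_1,1}$), summing smoothness constants gives that $D=-h_1-\psi$ is $(l_{h_1,1}+\tfrac{\sigma_{\max}^2(A)}{\alpha_{h_2}})$-smooth. This completes the proof.

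I do not expect a genuine obstacle here: the content is conjugate duality plus the affine-change-of-variables rules for smoothness and strong convexity. The points demanding care are (i) invoking part~1 rather than part~2 of Lemma~\ref{lemma: conjugate SC and smooth}, which is valid precisely because $h_2$ has full domain and is strongly convex; (ii) using the full column rank of $A$ to convert $\|A(\mu_1-\mu_2)\|^2$ into the lower bound $\sigma_{\min}^2(A)\|\mu_1-\mu_2\|^2$ --- without it $D$ need not be strongly concave; and (iii) relying on $-h_1$ being affine, rather than merely concave and smooth, for the strong-concavity conclusion.
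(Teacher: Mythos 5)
Your proof is correct and follows essentially the same route as the paper's: collapse the inner minimization to get $D(\mu)=-h_1(\mu)-h_2^*(-A\mu)$, transfer the $\tfrac{1}{l_{h_2,1}}$-strong convexity and $\tfrac{1}{\alpha_{h_2}}$-smoothness of $h_2^*$ through the linear map $-A$ using $\sigma_{\min}(A)$ and $\sigma_{\max}(A)$, and then fold in $h_1$. Your explicit remark that the strong-concavity modulus really relies on $h_1$ being affine (as it is in \eqref{eq: conv conc linear}) rather than merely concave is a point the paper's own proof glosses over, and is worth keeping.
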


\begin{proof}
    Following Definition \ref{def: conjugate function}, we have
    \begin{align*}
        D(\mu) = - h_1(\mu) - h_2^*(-A \mu) 
    \end{align*}
where $h_2^*(y)$ is $\frac{1}{l_{h_2,1}}$-strongly convex and $\frac{1}{\alpha_{h_2}}$-smooth according to Lemma \ref{lemma: conjugate SC and smooth}.

For all $\mu_1,\mu_2 $,
    \begin{align*}
        -D(\mu_1)-(-D(\mu_2)) =&  h^*_2(-A\mu_1) - h^*_2(-A\mu_2) + h_1(\mu_1) -h_1(\mu_2) \\
        \geq & \langle  \frac{\partial h_2^*(-A\mu_2)}{\partial -A\mu_2} , -A\mu_1 + A\mu_2 \rangle + \frac{1/{l_{h_2,1}}}{2} \| A\mu_1 -A\mu_2 \|^2 \\
        & + \langle
        \nabla h_1 (\mu_2), \mu_1 -\mu_2 \rangle
        \rangle\\
        \geq &  \langle  \nabla D(\mu_2) , \mu_1 -\mu_2 \rangle + \frac{ \sigma_{\min}^2(A) / l_{h_2,1} }{2} \| \mu_1 -\mu_2 \|^2.
    \end{align*}
    where the first inequality follows the strong convexity of $h_2^*(y)$ and the fact that $-h_1(\mu)$ is convex as $h_1(y)$ is concave. and the second inequality follows the chain rule to formulate $\nabla D(\mu_2)$.
    Therefore, $-D(\mu)$ is $\frac{\sigma^2_{\min}(A)}{l_{h_2,1}}$-strongly convex, and $D(\mu)$ is $\frac{\sigma^2_{\min}(A)}{l_{h_2,1}}$-strongly concave.

    Moreover $D(\mu)$ is $(l_{h_1,1} + \frac{\sigma^2_{\max}(A)}{\alpha_{h_2}})$-smooth as
    \begin{align*}
        D(\mu_1)-D(\mu_2) =& -h^*_2(-A\mu_1) -(- h^*_2(-A\mu_2)) -h_1(\mu_1)+ h_1(\mu_2) \\
        \leq & \langle  \frac{\partial -h_2^*(-A\mu_2)}{\partial -A\mu_2} , -A\mu_1 -(-A\mu_2 )\rangle + \frac{1/{\alpha_{h_2}}}{2} \| -A\mu_1 -(-A\mu_2 ) \|^2 \\
        &+ \langle
        -\nabla h_1 (\mu_2), \mu_1 -\mu_2 \rangle
        \rangle
        +\frac{l_{h_1,1}}{2}\| \mu_1 -\mu_2 \|^2 \\
        \leq &  \langle  \nabla D(\mu_2) , \mu_1 -\mu_2 \rangle + \frac{l_{h_1,1} + \frac{\sigma^2_{\max}(A)}{\alpha_{h_2}}}{2} \| \mu_1 -\mu_2 \|^2.
    \end{align*}
    The first inequality holds as $h_2^*(y)$ and $h_1(\mu)$ are smooth. The second follows the chain rule.
    
    Note $\sigma_{\max}(A) \geq \sigma_{\min}(A)> 0$ as $A$ is full column rank. This completes the proof.
\end{proof}

Knowing $D(\mu)$ has such favorable properties, we next analyze the update of $\|\mu_{t+1}-\mu_t\|$, where $\{\mu_t\}$ is the sequence generated in the non-accelerated version of Algorithm \ref{alg: Primal-Dual Gradient Method} with $T_y=1$.
\begin{lemma}[Update of $\|\mu_{t+1}-\mu_t\|$] 
\label{lemma: update of mu t+1 - mu t}
Consider the problem in \eqref{eq: conv conc linear} where $h_1$ is concave and $l_{h_1,1}$-smooth, $h_2$ is $\alpha_{h_2}$-strongly convex and $l_{h_2,1}$-smooth, and $A$ is full column rank. Running the non-accelerated version of Algorithm \ref{alg: Primal-Dual Gradient Method} with $T_y=1$ and $\eta_1 \leq {l_{h_2,1}}^{-1}$ gives
\begin{align}
    \frac{1}{\eta_2}\|\mu_{t+1}-\mu_t\| 
    \leq & \left(l_{h_1,1} + \frac{\sigma^2_{\max}(A)}{\alpha_{h_2}} \right) \| \mu_t -\mu^* \| \nonumber \\
    & + \sigma_{\max}(A)(1-\eta_1 \alpha_{h_2}/2)\| y_{t}-\nabla h_2^*(-A\mu_t)\| +\| \lambda \| \label{eq: bound mu t+1 - mu t}
\end{align}
where the constant $\lambda$ satisfies $0\leq \lambda <\infty$ and $\mu^* = \arg\max_{\mu\in \mathbb{R}^{d_c}_+}D(\mu)$ with $D(\mu)$ defined in \eqref{eq: D mu linear}.
\end{lemma}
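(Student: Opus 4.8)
The plan is to bound $\tfrac{1}{\eta_2}\|\mu_{t+1}-\mu_t\|$ by the norm of the inexact dual gradient driving the $\mu$-update, and then decompose that gradient as (exact dual gradient at $\mu_t$) $+$ (a smoothness-controlled deviation towards $\nabla D(\mu^*)$) $+$ (the residual $\nabla D(\mu^*)$), the last of which equals $-\lambda$ by first-order stationarity of $\max_{\mu\ge 0}D(\mu)$.

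First I would record that in the single-loop, non-accelerated run of Algorithm~\ref{alg: Primal-Dual Gradient Method} with $T_y=1$ we have $\mu_{t+\frac12}=\mu_t$ and $\mu_{t+1}=\operatorname{Proj}_{\mathbb{R}^{d_c}_+}(\mu_t+\eta_2 g_t)$, where $g_t := \nabla_\mu L(\mu_t,y_{t+1}) = -\nabla h_1(\mu_t)+A^\top y_{t+1}$. Since $\mu_t \in \mathbb{R}^{d_c}_+$ equals its own projection and projection onto a convex set is nonexpansive, $\|\mu_{t+1}-\mu_t\| = \|\operatorname{Proj}_{\mathbb{R}^{d_c}_+}(\mu_t+\eta_2 g_t) - \operatorname{Proj}_{\mathbb{R}^{d_c}_+}(\mu_t)\| \le \eta_2\|g_t\|$, hence $\tfrac1{\eta_2}\|\mu_{t+1}-\mu_t\| \le \|g_t\|$. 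It then remains to bound $\|g_t\|$.

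Next I would control $\|g_t\|$ by three terms via the triangle inequality around $\nabla D(\mu_t)$ and $\nabla D(\mu^*)$. For the bias $g_t - \nabla D(\mu_t)$: by \eqref{eq: solution to y star mu} and the Danskin-type identity $\nabla D(\mu) = \nabla_\mu L(\mu, y^*_\mu(\mu)) = -\nabla h_1(\mu) + A^\top \nabla h_2^*(-A\mu)$, so $g_t - \nabla D(\mu_t) = A^\top(y_{t+1} - \nabla h_2^*(-A\mu_t))$ and $\|g_t - \nabla D(\mu_t)\| \le \sigma_{\max}(A)\,\|y_{t+1} - \nabla h_2^*(-A\mu_t)\|$; applying \eqref{eq: contraction of y} (valid since $\eta_1 \le l_{h_2,1}^{-1}$) bounds this by $\sigma_{\max}(A)(1-\eta_1\alpha_{h_2}/2)\|y_t - \nabla h_2^*(-A\mu_t)\|$, the middle term of the claim. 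For $\nabla D(\mu_t) - \nabla D(\mu^*)$: Lemma~\ref{lemma: D(mu)} gives that $D$ is $(l_{h_1,1}+\sigma_{\max}^2(A)/\alpha_{h_2})$-smooth, so this term is at most $(l_{h_1,1}+\sigma_{\max}^2(A)/\alpha_{h_2})\|\mu_t - \mu^*\|$. Finally, rewriting $\max_{\mu\in\mathbb{R}^{d_c}_+}D(\mu)$ as the unconstrained problem $\max_\mu D(\mu) + \lambda^\top\mu$ with $0\le\lambda<\infty$ (exactly as done for $D_g,D_F$ in the proof of Theorem~\ref{thm: PDGM}), first-order stationarity yields $\nabla D(\mu^*) = -\lambda$, so $\|\nabla D(\mu^*)\| = \|\lambda\|$. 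Summing the three bounds gives precisely \eqref{eq: bound mu t+1 - mu t}.

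The only step that is not a routine inequality is the last one: one needs $\mu^* = \arg\max_{\mu\ge 0}D(\mu)$ to exist and be unique—which uses that $A$ has full column rank, so that $D$ is strongly concave by Lemma~\ref{lemma: D(mu)}—and one needs the multiplier $\lambda$ of the polyhedral constraint $\mu\ge 0$ to be finite, a standard Lagrangian-duality fact that is already invoked for $D_g,D_F$ in the proof of Theorem~\ref{thm: PDGM}. Everything else is the triangle inequality combined with the nonexpansiveness of the projection, the contraction \eqref{eq: contraction of y}, and the smoothness estimate of Lemma~\ref{lemma: D(mu)}.
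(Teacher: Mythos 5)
Your proposal is correct and follows essentially the same route as the paper's proof: nonexpansiveness of the projection to reduce to $\|-\nabla h_1(\mu_t)+A^\top y_{t+1}\|$, then the add-and-subtract of $A^\top\nabla h_2^*(-A\mu_t)$ and $\lambda$, the contraction \eqref{eq: contraction of y}, and the smoothness modulus from Lemma \ref{lemma: D(mu)}. The only cosmetic difference is that you decompose around $\nabla D$ and invoke $\nabla D(\mu^*)=-\lambda$ directly, whereas the paper works with $\tilde D(\mu)=D(\mu)+\lambda^\top\mu$ and its stationarity $\nabla\tilde D(\mu^*)=0$; since $\nabla\tilde D(\mu_t)-\nabla\tilde D(\mu^*)=\nabla D(\mu_t)-\nabla D(\mu^*)$, the two are identical in content.
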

\begin{proof}
According to Lemma \ref{lemma: D(mu)}, 
\begin{align*}
    D(\mu) = \min_{y \in \mathbb{R}^{d_y}} -h_1(\mu)+ y^{\top} A \mu + h_2(y)= -h_1(\mu)- h_2^*(-A \mu) 
\end{align*}
is $\frac{\sigma^2_{\min}(A)}{l_{h_2,1}}$-strongly concave and $(l_{h_1,1} + \frac{\sigma^2_{\max}(A)}{\alpha_{h_2}})$-smooth with respect to $\mu$. Moreover, the problem $\max_{\mu \in \mathbb{R}^{d_c}_+} D(\mu)$
is equivalent to the unconstrained problem with the Lagrange multiplier
\begin{align*}
    \max_{\mu  \in \mathbb{R}^{d_c}}  \tilde{D}(\mu) := D(\mu) + \lambda^\top \mu
\end{align*}
where unique $\lambda$ is non-negative and finite in all dimension, i.e. $0\leq \lambda < \infty $, as $D(\mu)$ is strongly convex and $\mu \in \mathbb{R}^{d_c}_+$ is equivalent to $\mu \geq 0$ satisfying the LICQ condition (Lemma \ref{lemma: unique solution of SC}). In this way, 
\begin{align}
\label{eq: tilde D mu gradient}
    \nabla \tilde{D} (\mu) = \nabla D(\mu)+\lambda = -\nabla h_1(\mu) + A^\top \nabla h_2^*(-A\mu) +\lambda.
\end{align}
We can see that $\tilde{D} (\mu)$ is smooth and strongly concave with the same modulus as $D(\mu)$. The first-order stationary condition requires
\begin{align}
    \nabla \tilde{D} (\mu^*) = -\nabla h_1(\mu^*) + A^\top \nabla h_2^*(-A\mu^*) +\lambda =  0.
    \label{eq: D mu stationary condition}
\end{align}

In this way,
\begin{align*}
    & \frac{1}{\eta_2}\|\mu_{t+1}-\mu_t\| =  \frac{1}{\eta_2}\| \operatorname{Proj}_{\mathbb{R}^{d_c}_+}\left(\mu_t+ \eta_2 (-\nabla h_1(\mu_t)+ A^\top y_{t+1})\right) - \mu_t\|   \\
    \stackrel{(a)}{\leq} &  \| -\nabla h_1(\mu_t)+ A^\top y_{t+1} \|   \\
    = & \| -\nabla h_1(\mu_t)+ A^\top \nabla h_2^*(-A\mu_t) + \lambda + A^\top y_{t+1} - A^\top \nabla h_2^*(-A\mu_t) -\lambda  \|  \nonumber \\
    \stackrel{(b)}{\leq} & \| \nabla \tilde{D}(\mu_t)\| + \sigma_{\max}(A)\| y_{t+1}- \nabla h_2^*(-A\mu_t)\| +\| \lambda \| \nonumber \\
    \stackrel{(c)}{\leq} & \| \nabla \tilde{D}(\mu_t) - \nabla \tilde{D}(\mu^*)\| + \sigma_{\max}(A)(1-\eta_1 \alpha_{h_2}/2)\| y_{t}-\nabla h_2^*(-A\mu_t)\| +\| \lambda \|  \\
    \stackrel{(d)}{\leq} & \left(l_{h_1,1} + \frac{\sigma^2_{\max}(A)}{\alpha_{h_2}} \right) \| \mu_t -\mu^* \| + \sigma_{\max}(A)(1-\eta_1 \alpha_{h_2}/2)\| y_{t}-\nabla h_2^*(-A\mu_t)\| +\| \lambda \|
\end{align*}
Inequality $(a)$ comes from the non-expansiveness (1-Lipschitzness) of the projection operation, $(b)$ follows triangle inequality and uses \eqref{eq: tilde D mu gradient}, $(c)$ uses \eqref{eq: D mu stationary condition} and \eqref{eq: contraction of y} in Lemma \ref{lemma: update of y_t - y^*}, and $(d)$ comes from the smoothness of $\tilde{D}(\mu)$, which is of the same modulus as $D(\mu)$. This completes the proof. 
\end{proof}

In \eqref{eq: bound mu t+1 - mu t}, the update behavior of $\| \mu_{t+1}-\mu_t\|$ depends on $\|\mu_t-\mu^*\|$. We further look into the update of $\|\mu_t-\mu^*\|$ and summarize in the following lemma the bound of the update of $\|\mu_t-\mu^*\|$.

\begin{lemma}[Update of $\| \mu_{t}-\mu^*\|$]
\label{lemma: update of mu t - mu star}
Consider the problem in \eqref{eq: conv conc linear} where $h_1$ is concave and $l_{h_1,1}$-smooth, $h_2$ is $\alpha_{h_2}$-strongly convex and $l_{h_2,1}$-smooth, and $A$ is full column rank. Conduct the non-accelerated version of Algorithm \ref{alg: Primal-Dual Gradient Method} with $T_y=1$, gives
\begin{align}
    \|{\mu}_{t+1}-\mu^*\|
    \leq & \left(1- \eta_2 \frac{\sigma^2_{\min}(A)}{2 l_{h_2,1}} \right) \| \mu_t -\mu^*\|  +\eta_2 \sigma_{\max} (A) (1-\eta_1 \alpha_{h_2}/2)\| y_{t}-\nabla h_2^*(-A\mu_t)\|.
        \label{eq: bound mu_t-mu^*}
\end{align}
when $\eta_1 \leq {l_{h_2,1}}^{-1}$ and $\eta_2\leq \left(l_{h_1,1} + \frac{\sigma^2_{\max}(A)}{\alpha_{h_2}}\right)^{-1}$. Here $\mu^* = \arg\min_{\mu\in \mathbb{R}^{d_c}_+}D(\mu)$ where $D(\mu)$ is defined in \eqref{eq: D mu linear}.
    
\end{lemma}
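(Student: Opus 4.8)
The idea is to write the true $\mu$-update as an \emph{exact} projected gradient step on $D(\mu)$ plus a perturbation that is entirely due to replacing the inner minimizer $y^*_\mu(\mu_t)=\nabla h_2^*(-A\mu_t)$ by the single inner-loop iterate $y_{t+1}$, and then to control the two pieces separately. I would first recall from Lemma~\ref{lemma: D(mu)} that under the present hypotheses
\begin{align*}
D(\mu)=-h_1(\mu)-h_2^*(-A\mu),\qquad \nabla D(\mu)=-\nabla h_1(\mu)+A^\top\nabla h_2^*(-A\mu),
\end{align*}
that $-D$ is $\frac{\sigma_{\min}^2(A)}{l_{h_2,1}}$-strongly convex (this is where $A$ full column rank enters), and that $D$ is $\big(l_{h_1,1}+\frac{\sigma_{\max}^2(A)}{\alpha_{h_2}}\big)$-smooth; here $\mu^*$ denotes the unique maximizer of $D$ over $\mathbb{R}^{d_c}_+$, equivalently the unique minimizer of $-D$ there.

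Next I would introduce the idealized iterate
\begin{align*}
\hat\mu_{t+1}:=\operatorname{Proj}_{\mathbb{R}^{d_c}_+}\!\big(\mu_t+\eta_2\nabla D(\mu_t)\big),
\end{align*}
which is exactly one step of projected gradient descent for $\min_{\mu\in\mathbb{R}^{d_c}_+}(-D(\mu))$. Since $\eta_2\le\big(l_{h_1,1}+\frac{\sigma_{\max}^2(A)}{\alpha_{h_2}}\big)^{-1}$ is no larger than the reciprocal smoothness constant of $-D$, Lemma~\ref{lemma: bubeck2015convex} applies with strong-convexity modulus $\frac{\sigma_{\min}^2(A)}{l_{h_2,1}}$ and yields the contraction $\|\hat\mu_{t+1}-\mu^*\|\le\big(1-\eta_2\frac{\sigma_{\min}^2(A)}{2l_{h_2,1}}\big)\|\mu_t-\mu^*\|$.

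Then I would bound the deviation of the actual iterate $\mu_{t+1}=\operatorname{Proj}_{\mathbb{R}^{d_c}_+}\!\big(\mu_t+\eta_2\nabla_\mu L(\mu_t,y_{t+1})\big)$ from $\hat\mu_{t+1}$. Because $\nabla_\mu L(\mu_t,y_{t+1})=-\nabla h_1(\mu_t)+A^\top y_{t+1}$ while $\nabla D(\mu_t)=-\nabla h_1(\mu_t)+A^\top\nabla h_2^*(-A\mu_t)$, the two search directions differ only by $A^\top\big(y_{t+1}-\nabla h_2^*(-A\mu_t)\big)$; non-expansiveness of the projection together with $\|A^\top z\|\le\sigma_{\max}(A)\|z\|$ then gives $\|\mu_{t+1}-\hat\mu_{t+1}\|\le\eta_2\sigma_{\max}(A)\|y_{t+1}-\nabla h_2^*(-A\mu_t)\|$, and \eqref{eq: contraction of y} of Lemma~\ref{lemma: update of y_t - y^*} (valid since $\eta_1\le l_{h_2,1}^{-1}$) replaces the right-hand side by $\eta_2\sigma_{\max}(A)(1-\eta_1\alpha_{h_2}/2)\|y_t-\nabla h_2^*(-A\mu_t)\|$. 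Combining this with the contraction from the previous step through $\|\mu_{t+1}-\mu^*\|\le\|\hat\mu_{t+1}-\mu^*\|+\|\mu_{t+1}-\hat\mu_{t+1}\|$ produces exactly \eqref{eq: bound mu_t-mu^*}.

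Everything above is routine once Lemma~\ref{lemma: D(mu)} is in hand; the only point that needs care is the clean separation into the exact projected-gradient operator and the perturbation, so that the constrained linear-convergence statement of \citep{bubeck2015convex} can be invoked verbatim for $-D$ on $\mathbb{R}^{d_c}_+$ (rather than the add-and-subtract / Lagrange-multiplier estimate used for Lemma~\ref{lemma: update of mu t+1 - mu t}, which would leave an extra $\|\lambda\|$ term and no contraction factor strictly below $1$), and then absorbing the inner-loop tracking error $\|y_t-\nabla h_2^*(-A\mu_t)\|$ via the $y$-contraction. I do not anticipate any genuine obstacle beyond this bookkeeping.
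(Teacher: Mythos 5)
Your proposal is correct and follows essentially the same route as the paper's proof: the paper likewise introduces the auxiliary exact projected-gradient iterate $\tilde\mu_{t+1}=\operatorname{Proj}_{\mathbb{R}^{d_c}_+}(\mu_t+\eta_2\nabla D(\mu_t))$, invokes Lemma \ref{lemma: bubeck2015convex} on the strongly convex $-D$ for the contraction, bounds $\|\tilde\mu_{t+1}-\mu_{t+1}\|$ by $\eta_2\sigma_{\max}(A)\|y_{t+1}-\nabla h_2^*(-A\mu_t)\|$ via non-expansiveness of the projection, and finishes with the triangle inequality and \eqref{eq: contraction of y}. No discrepancies to note.
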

\begin{proof}
    Define an auxiliary update as
    \begin{align}
        \tilde{\mu}_{t+1} := \operatorname{Proj}_{\mathbb{R}^{d_c}_+} \left( \mu_t + \eta_2 \nabla D(\mu_t)\right)
        = \operatorname{Proj}_{\mathbb{R}^{d_c}_+} \left( \mu_t + \eta_2(-\nabla h_1(\mu_t) + A^{\top} \nabla h_2^*(-A \mu_t) )\right).
    \end{align}
    This is a projected gradient descent on strongly convex $-D(\mu)$. As $\mathbb{R}^{d_c}_+$ is closed and convex, 
    following Lemma \ref{lemma: bubeck2015convex}, for $\eta_2\leq \left(l_{h_1,1} + \frac{\sigma^2_{\max}(A)}{\alpha_{h_2}}\right)^{-1}$, where $\left(l_{h_1,1} + \frac{\sigma^2_{\max}(A)}{\alpha_{h_2}}\right)$ is the modulus for smoothness of $D(\mu)$ by Lemma \ref{lemma: D(mu)}, we have
    \begin{align*}
        \|\tilde{\mu}_{t+1}-\mu^*\| \leq & \left(1- \eta_2 \frac{\sigma^2_{\min}(A)}{2l_{h_2,1}} \right) \| \mu_t -\mu^*\|.
    \end{align*}
    
    As the real update is $\mu_{t+1} = \operatorname{Proj}_{\mathbb{R}^{d_c}_+}\left((\mu_t + \eta_2(-\nabla h_1(\mu_t) +A^{\top} y_t) \right)$, 
    by the non-expansiveness (1-Lipschitzness) of projection operation, we have
    \begin{align*}
        \|\tilde{\mu}_{t+1}-\mu_{t+1} \| \leq & \|\eta_2 A^{\top} (y_{t+1}-\nabla h_2^*(-A\mu_t) )\|
        \leq \eta_2 \sigma_{\max} (A)\|y_{t+1}-\nabla h_2^*(-A\mu_t) \| 
    \end{align*}
    By triangle inequality and \eqref{eq: contraction of y}, we have
    \begin{align}
        & \|{\mu}_{t+1}-\mu^*\| \leq \left(1- \eta_2 \frac{\sigma^2_{\min}(A)}{2 l_{h_2,1}} \right) \| \mu_t -\mu^*\|  +\eta_2 \sigma_{\max} (A) \|y_{t+1}-\nabla h_2^*(-A\mu_t) \| \nonumber \\
        \leq & \left(1- \eta_2 \frac{\sigma^2_{\min}(A)}{2 l_{h_2,1}} \right) \| \mu_t -\mu^*\|  +\eta_2 \sigma_{\max} (A) (1-\eta_1 \alpha_{h_2}/2)\| y_{t}-\nabla h_2^*(-A\mu_t)\|.
    \end{align}
    This completes the proof.
\end{proof}

We are ready to proceed with the convergence analysis for the single-loop algorithm (Algorithm \ref{alg: Primal-Dual Gradient Method}) without acceleration and $T_y=1$, on the problems \eqref{eq: conv conc linear}, which is a general form to \eqref{eq: y_g,mu_g} and \eqref{eq: y_F,mu_F}. In this way, Theorem \ref{thm: linear convergence_PDGM} follows directly from the following theorem.

\begin{theorem}
\label{thm: inner loop convergence}
Suppose $L(\mu,y)$ is in the form of \eqref{eq: conv conc linear} where $A$ is full column rank, $h_1$ is concave and $l_{h_1,1}$-smooth, $h_2$ is $\alpha_{h_2}$-strongly convex and $l_{h_2,1}$-smooth satisfying $l_{h_1,1}={\cal O}(1)$, $l_{h_2,1},l_{\alpha_2}\geq {\cal O}(1)$, and $\frac{l_{h_2,1}}{\alpha_{h_2}}={\cal O}(1)$. Conduct the non-accelerated version of Algorithm \ref{alg: Primal-Dual Gradient Method} with $T_y = 1$. For arbitrary small positive $\epsilon \leq \left( \frac{4 l_{h_2,1} \sigma_{\max}(A)}{\alpha_{h_2} \sigma_{\min}^2(A)} (l_{h_1,1} + \frac{\sigma^2_{\max}(A)}{\alpha_{h_2}})\right)^{-1}$, when $ \eta_1 = {\cal O}( \frac{1}{l_{h_2,1}}) \leq  \frac{1}{l_{h_2,1}}$ and $\eta_2 = {\cal O}(\epsilon) \leq \frac{1}{l_{h_1,1}+\sigma_{\max}^2(A)/\alpha_{h_2}}$, the algorithm yields output $(\mu_T,y_T)$ such that

\begin{align*}
    \| \mu_T -\mu^* \|^2<\epsilon,\quad \text{and} \quad \|y_T - y^* \|^2<\epsilon
\end{align*}
with complexity $T = {\cal O}(\ln(\epsilon^{-1}))$. Here, $(\mu^*,y^*)= \arg\max_{\mu\in\mathbb{R}^{d_c}}\min_{y\in\mathbb{R}^{d_y}}L(\mu,y)$.
\end{theorem}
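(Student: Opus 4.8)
The plan is to first reduce Theorem~\ref{thm: linear convergence_PDGM} to the generic saddle-point statement of Theorem~\ref{thm: inner loop convergence}, and then to prove the latter by a Lyapunov (potential-function) argument assembled from the one-step contractions already established in Lemmas~\ref{lemma: update of y_t - y^*}, \ref{lemma: D(mu)}, \ref{lemma: update of mu t+1 - mu t}, and \ref{lemma: update of mu t - mu star}. For the reduction: fix $x\in\mathcal{X}$; with $g^c$ affine in $y$ as in \eqref{eq:affine-gc} and $\mathcal{Y}=\mathbb{R}^{d_y}$, both inner problems \eqref{eq: y_g,mu_g} and \eqref{eq: y_F,mu_F} are instances of \eqref{eq: conv conc linear} under the identification $h_1(\mu)=\langle g^c_2(x),\mu\rangle$ (linear, so $l_{h_1,1}=0$), $A=g^c_1(x)$ (full column rank with $\sigma_{\min}(A),\sigma_{\max}(A)\in[s_{\min},s_{\max}]$), and $h_2=g(x,\cdot)$ respectively $h_2=f(x,\cdot)+\gamma(g(x,\cdot)-v(x))$; the latter is $(\gamma\alpha_g-l_{f,1})$-strongly convex (positive because $\gamma>l_{f,1}/\alpha_g$) and ${\cal O}(\gamma)$-smooth, so in both cases the ${\cal O}(1)$-conditioning hypotheses of Theorem~\ref{thm: inner loop convergence} hold. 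Applying Theorem~\ref{thm: inner loop convergence} with $\epsilon=\epsilon_g$ and $T=T_g={\cal O}(\ln(\epsilon_g^{-1}))$ (and likewise with $\epsilon_F,T_F$) yields $\|\mu_T-\mu^*\|^2,\|y_T-y^*\|^2={\cal O}(\epsilon)$, and since $y^*=\nabla h_2^*(-A\mu^*)$ with $\nabla h_2^*$ being $1/\alpha_{h_2}$-Lipschitz (Lemma~\ref{lemma: update of y_t - y^*}), this is exactly what \eqref{eq: squared distance estimation error} demands. So the entire content is Theorem~\ref{thm: inner loop convergence}.

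For that theorem I would track the two scalar errors $a_t:=\|\mu_t-\mu^*\|$ and $b_t:=\|y_t-\nabla h_2^*(-A\mu_t)\|$, the latter being the tracking error of the single inner $y$-step relative to its best response. Lemma~\ref{lemma: update of mu t - mu star} already supplies a bound of the form $a_{t+1}\le(1-\eta_2 c_1)\,a_t+\eta_2 c_2\,b_t$ with $c_1=\sigma^2_{\min}(A)/(2l_{h_2,1})$, and Lemma~\ref{lemma: update of y_t - y^*} together with Lemma~\ref{lemma: update of mu t+1 - mu t} supplies $b_{t+1}\le\kappa\,b_t+\tfrac{\sigma_{\max}(A)}{\alpha_{h_2}}\,\|\mu_{t+1}-\mu_t\|$ with the constant contraction $\kappa:=1-\eta_1\alpha_{h_2}/2<1$ (genuinely below $1$ since $\eta_1={\cal O}(1/l_{h_2,1})$ and $l_{h_2,1}/\alpha_{h_2}={\cal O}(1)$), while $\|\mu_{t+1}-\mu_t\|$ is controlled via the triangle inequality $\|\mu_{t+1}-\mu_t\|\le a_{t+1}+a_t$ and/or via Lemma~\ref{lemma: update of mu t+1 - mu t} by $\eta_2$ times a combination of $a_t$, $b_t$, and the multiplier norm $\|\lambda\|$. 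I would then feed these inequalities into a potential $V_t:=a_t+\beta b_t$, choosing the weight $\beta$ from the problem constants so that the two cross-coupling terms are dominated by the diagonal contractions, and establish $V_{t+1}\le\rho\,V_t+(\text{residual})$ with $\rho<1$; the stepsize windows $\eta_1\le1/l_{h_2,1}$, $\eta_2\le(l_{h_1,1}+\sigma^2_{\max}(A)/\alpha_{h_2})^{-1}$ and the explicit upper bound imposed on $\epsilon$ in the statement are precisely the conditions under which such a $\beta$ exists and $\rho<1$. Telescoping then gives $V_T={\cal O}(\sqrt{\epsilon})$ after $T={\cal O}(\ln(\epsilon^{-1}))$ iterations, hence $\|\mu_T-\mu^*\|^2=a_T^2\le V_T^2={\cal O}(\epsilon)$ and, using $\|y_T-y^*\|\le b_T+\tfrac{\sigma_{\max}(A)}{\alpha_{h_2}}a_T\le(1+\tfrac{\sigma_{\max}(A)}{\alpha_{h_2}})V_T$, also $\|y_T-y^*\|^2={\cal O}(\epsilon)$.

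The step I expect to be the real obstacle — and the one the paper singles out — is the projection $\operatorname{Proj}_{\mathbb{R}^{d_c}_+}$ onto the sign constraint $\mu\ge0$: it is non-differentiable, so the clean ``descent--ascent is a contraction'' reasoning of \citep{du2019linear} for the fully unconstrained case does not transfer, and in particular $\|\mu_{t+1}-\mu_t\|$ cannot be equated with a gradient norm. The intended device, already set up in Lemmas~\ref{lemma: D(mu)} and \ref{lemma: update of mu t - mu star}, is to re-express $\max_{\mu\ge0}D(\mu)$ as the unconstrained $\max_\mu\tilde D(\mu):=D(\mu)+\lambda^\top\mu$, which by Lemma~\ref{lemma: D(mu)} retains the strong concavity and smoothness of $D$, satisfies $\nabla\tilde D(\mu^*)=0$, and whose multiplier $0\le\lambda<\infty$ obeys complementary slackness $\lambda^\top\mu^*=0$ --- equivalently $\operatorname{Proj}_{\mathbb{R}^{d_c}_+}(\mu^*-\eta_2\lambda)=\mu^*$, which is exactly what makes the fixed-point comparison go through using only non-expansiveness of the projection. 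The two genuinely delicate points are then (i) keeping the $\|\lambda\|$-dependent residual that this comparison injects into the $\mu$-step estimate at order ${\cal O}(\epsilon)$ --- which is why $\eta_2$ is taken of order $\epsilon$ --- and (ii) verifying that the potential-function contraction actually closes despite the two-way coupling between the $\mu$- and $y$-errors, which is what forces the stated upper bound on $\epsilon$; the remainder is routine telescoping and constant-chasing.
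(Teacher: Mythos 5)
Your proposal is correct and follows essentially the same route as the paper: the paper's proof also builds a weighted potential $P_t=\rho\|\mu_t-\mu^*\|+\|y_t-\nabla h_2^*(-A\mu_t)\|$ (your $V_t$ up to rescaling of the weight), closes the contraction by combining exactly Lemmas \ref{lemma: update of y_t - y^*}, \ref{lemma: update of mu t+1 - mu t}, and \ref{lemma: update of mu t - mu star}, handles the projection onto $\mathbb{R}^{d_c}_+$ via the unconstrained reformulation $\tilde D(\mu)=D(\mu)+\lambda^\top\mu$ with complementary slackness, and uses $\eta_2={\cal O}(\epsilon)$ precisely to keep the $\|\lambda\|$-residual at the right order before telescoping. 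No substantive differences to report.
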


\begin{proof}

For some positive constant $\rho>0$, denote 
\begin{align}
\label{eq: P t}
P_t := \rho \| \mu_{t}-\mu^*\| + \| y_{t}-\nabla h_2^*(-A\mu_{t})\|.
\end{align}
Plugging \eqref{eq: bound y_t-y^*} in Lemma \ref{lemma: update of y_t - y^*}
, \eqref{eq: bound mu t+1 - mu t} in Lemma \ref{lemma: update of mu t+1 - mu t}, and \eqref{eq: bound mu_t-mu^*} in Lemma \ref{lemma: update of mu t - mu star} to \eqref{eq: P t}, we know
\begin{align*}
    & P_{t+1} = \rho \| \mu_{t+1}-\mu^*\| + \| y_{t+1}-\nabla h_2^*(-A\mu_{t+1})\| \\
    \leq & \rho \left( \left(1- \eta_2 \frac{\sigma^2_{\min}(A)}{2 l_{h_2,1}} \right) \| \mu_t -\mu^*\|  +\eta_2 \sigma_{\max} (A) (1-\eta_1 \alpha_{h_2}/2)\| y_{t}-\nabla h_2^*(-A\mu_t)\| \right)\\
    & + (1-\eta_1 \alpha_{h_2}/2)\| y_{t}-\nabla h_2^*(-A\mu_t)\| + \frac{\sigma_{\max}(A)}{\alpha_{h_2}}\eta_2 \\
    & \times
    \left( (l_{h_1,1} + \frac{\sigma^2_{\max}(A)}{\alpha_{h_2}}) \| \mu_t -\mu^* \|+ \sigma_{\max}(A)(1-\eta_1 \alpha_{h_2}/2)\| y_{t}-\nabla h_2^*(-A\mu_t)\| +\| \lambda \| \right) \\
    = & \left(1-\eta_2 \frac{\sigma_{\min}^2(A)}{2 l_{h_2,1}} +  \frac{1}{\rho}\frac{\sigma_{\max}(A)}{\alpha_{h_2}} \eta_2  (l_{h_1,1} + \frac{\sigma^2_{\max}(A)}{\alpha_{h_2}}) 
    \right) \rho \| \mu_t -\mu^* \| \\
    & +  (1-\eta_1 \alpha_{h_2}/2) \left( 1 + \rho \eta_2 \sigma_{\max} (A) +\frac{\sigma^2_{\max}(A)}{\alpha_{h_2}} \eta_2 
    \right)\| y_{t}-\nabla h_2^*(-A\mu_t)\|  + \frac{\sigma_{\max}(A)}{\alpha_{h_2}}
    \eta_2 \|\lambda \|.
\end{align*}

To construct $P_{t+1}\leq (1-c)P_t + \frac{\sigma_{\max}(A)}{\alpha_{h_2}}
    \eta_2 \|\lambda \|$ for some constant $0<c<1$, it is sufficient to find $\eta_1 \leq \frac{1}{l_{h_2,1}} ,\eta_2\leq \frac{1}{(l_{h_1,1} + \frac{\sigma^2_{\max}(A)}{\alpha_{h_2}})}$, and $\rho>0$ such that
\begin{align*}
\begin{cases}
    0<\left(1-\eta_2 \frac{\sigma_{\min}^2(A)}{2 l_{h_2,1}} +  \frac{1}{\rho}\frac{\sigma_{\max}(A)}{\alpha_{h_2}} \eta_2  (l_{h_1,1} + \frac{\sigma^2_{\max}(A)}{\alpha_{h_2}}) 
    \right)\leq 1-\eta_2 \frac{\sigma_{\min}^2(A)}{4 l_{h_2,1}}<1\\
    0<(1-\eta_1 \alpha_{h_2}/2) \left( 1 + \rho \eta_2 \sigma_{\max} (A) +\frac{\sigma^2_{\max}(A)}{\alpha_{h_2}} \eta_2 
    \right)\leq (1-\eta_1 \alpha_{h_2}/2)(1+\eta_1 \alpha_{h_2}/2)<1
\end{cases}
\end{align*}
This can be obtained when
\begin{align}
\begin{cases}
    \rho \geq \frac{4 l_{h_2,1} \sigma_{\max}(A)}{\alpha_{h_2} \sigma_{\min}^2(A)} (l_{h_1,1} + \frac{\sigma^2_{\max}(A)}{\alpha_{h_2}}) \\
    \eta_2 \leq \frac{\eta_1\alpha_{h_2}}{2 \left( \rho  \sigma_{\max} (A) + \frac{\sigma^2_{\max}(A)}{\alpha_{h_2}} \right)}
    \label{eq: condition on parameter choice}
\end{cases}
\end{align}

Conditions in \eqref{eq: condition on parameter choice} can be satisfied when $\epsilon>0$ is sufficiently small such that $\rho = \epsilon^{-1} \geq \frac{4 l_{h_2,1} \sigma_{\max}(A)}{\alpha_{h_2} \sigma_{\min}^2(A)} (l_{h_1,1} + \frac{\sigma^2_{\max}(A)}{\alpha_{h_2}})$, $\eta_1 = {\cal O}(\frac{1}{l_{h_2,1}})$ and $\eta_2 ={\cal O}(\frac{\alpha_{h_2}}{l_{h_2,1}} \rho^{-1}) = {\cal O}(\epsilon^{-1})$. In this way,
\begin{align*}
    P_{t+1} \leq & (1-c)P_t + {\cal O}(\alpha_{h_2}^{-1}\epsilon)
\end{align*}
where $c>0$ is of the order ${\cal O}(\epsilon)$.
Iteration gives
\begin{align}
    P_{t} \leq & (1-c)^t P_0 + {\cal O}(\alpha_{h_2}^{-1}).
\end{align}
Notice ${\cal O}(\alpha_{h_2}^{-1})<{\cal O}(1)$ and $P_0 = {\cal O}(\epsilon^{-1})$ as $\rho =\epsilon^{-1}$. In this way, there exist $T_1 = {\cal O}(\ln(\epsilon^{-1}))$ such that for all $t>T_1$, $(1-c)^t P_0 = {\cal O}(1)$ and ${\cal O}(\alpha_{h_2}^{-1})\leq {\cal O}(1)$. Accordingly, we can achieve
\begin{align*}
    P_t = {\cal O}(1),\quad \forall t>T_1.
\end{align*}
Moreover, as $P_t = \epsilon^{-1} \| \mu_t-\mu^* \| +\|y_t -\nabla h^*_2(- A\mu_t)\|$,
\begin{align}
    \| \mu_t-\mu^* \| \leq \epsilon P_t = {\cal O}(\epsilon), \quad \forall t>T_1. \label{eq: mu t - mu star order}
\end{align}
Furthermore, choose $\eta_1 = {\cal O}(\frac{1}{l_{h_2,1}})$ satisfying $\eta_1 \leq \frac{1}{l_{h_2,1}}$, for $t>T_1$, 
\begin{align*}
    \| y_{t}-\nabla h_2^*(-A\mu_{t})\|
    \leq &(1-\eta_1 \alpha_{h_2}/2)^{t-T_1}\| y_{T_1}-\nabla h_2^*(-A\mu_{T_1})\|+ 
    {\cal O}(\epsilon).
\end{align*}
This is an iteration outcome using \eqref{eq: bound y_t-y^*} in Lemma \ref{lemma: update of y_t - y^*}, \eqref{eq: mu t - mu star order}, and the fact that $\eta_1 \alpha_{h_2}/2 = {\cal O}(\frac{\alpha_{h_2}}{l_{h_2,1}})={\cal O}(1)$. In this way, for another $T_2={\cal O}(\ln(\epsilon^{-1}))$ steps, we have
\begin{align*}
    &\| y_{t}-\nabla h_2^*(-A\mu_{t})\| = {\cal O}(\epsilon),\\
    \text{and}\quad &
    \| y_{t}-y^*\|\leq \| y_{t}-\nabla h_2^*(-A\mu_{t})\|+\|\nabla h_2^*(-A\mu_{t})-\nabla h_2^*(-A\mu^*)\|\\
    & ~~~~~~~~~~~~~~~~ \leq \| y_{t}-\nabla h_2^*(-A\mu_{t})\| + \frac{\sigma_{\max}^2(A)}{\alpha_{h_2}}\| \mu_t-\mu^*\|\\
    & ~~~~~~~~~~~~~~~~ = {\cal O} \left(\epsilon +\alpha_{h_2}^{-1} \epsilon \right) = {\cal O}  \left(\epsilon \right), \quad \forall t>T_1+T_2.
\end{align*}
We can see that the algorithm converges linearly with complexity ${\cal O}(T_1+T_2)={\cal O}(\ln(\epsilon^{-1}))$. In this way, obtaining
\begin{align*}
    \| y_{T}-y^*\|^2 = {\cal O} (\epsilon)  \quad
    \text{and}\quad 
     \| \mu_{T}-\mu^*\|^2 = {\cal O} (\epsilon) ,
\end{align*}
requires complexity $T={\cal O}(\ln((\sqrt{\epsilon})^{-1})) = {\cal O}(\ln(\epsilon^{-1}))$. This completes the proof.
\end{proof}

\begin{remark}
\label{rmk: stepsize 2}
    Under the same assumptions as in Theorem \ref{thm: linear convergence_PDGM}, we cam choose $\eta_{g,2} \lesssim (l_{g,1})^{-1}$,  $\eta_{g,1} \leq \alpha_g^2 \epsilon_g (2 s_{\max} \alpha_g + s_{\max}^2 \epsilon^{-1})^{-1} \eta_{g,2}$
    as stepsizes for running the single-loop version of Algorithm \ref{alg: Primal-Dual Gradient Method} to solve \eqref{eq: y_g,mu_g}, and $\eta_{F,2} 
    \lesssim (l_{f,1} + \gamma l_{g,1})^{-1}$, $\eta_{F,2} \leq (\gamma \alpha_g - l_{f,1})^2 \epsilon_F (2 s_{\max} (\gamma \alpha_g - l_{f,1}) + s_{\max}^2 \epsilon^{-1})^{-1} \eta_{F,2}$ as the ones for \eqref{eq: y_F,mu_F}.
\end{remark}

\section{Applications to Hyperparameter Optimization for SVM} \label{appendix:svm}

In this section, we provide additional details about the SVM model training experiment for the linear SVM model, including the problem formulation and analysis of the results.

\subsection{Problem formulation} \label{S:hpo_problem_intro}

SVMs train a machine learning model by finding the optimal hyperplane that separates data points of different classes with the maximum margin $w$. Misclassification is not tolerated for hard-margin SVMs. In contrast, some samples are allowed to be misclassified for soft-margin SVMs. Specifically, one first introduces variables $\xi_i$, which measure the violation associated with the classification of sample $i$, and then augments the original SVM objective with the norm of $\xi$, which is a vector collecting the values of $\xi_i$ for all the samples in the training set.

BLO can be applied to the hyperparameter selection task of SVM. For example, it can be used to choose the value of the regularization parameters during soft-margin linear SVM training. 
Let us consider a classification problem and define $\mathcal{D}_{\rm tr} := \{(z_{{\rm tr}, i}, l_{{\rm tr}, i})\}_{i=1}^{|\mathcal{D}_{\rm tr}|}$ as the training set, with $z_{{\rm tr}, i}$ being the input feature vector for sample $i$ and $l_{{\rm tr}, i}$ being its associated binary label. Similarly, let $\mathcal{D}_{\rm val} := \{(z_{{\rm val},}, l_{{\rm val},})\}_{i=1}^{|\mathcal{D}_{\rm val}|}$ be the validation set. For a linear classification problem, the parameters of the SVM are $w$, a vector of coefficients with the same size as $z_{{\rm tr}, i}$, and the intercept $b$.

In short, we are interested in the following constrained BLO problem
\begin{subequations}\label{eq: svm_reform}
\begin{align} 
    \min_c ~~~ \mathcal{L}_{\mathcal{D}_{\rm val}}(w^*,b^*)=&\sum_{(z_{{\rm val}}, l_{{\rm val}}) \in \mathcal{D}_{\rm val}} \exp\left({1-l_{{\rm val}} \left(z_{{\rm val}}^\top w^*+b^* \right)} \right) + \frac{1}{2} \| c \|^2 \label{eq: svm_reform_ulobj}\\
    \label{eq: svm_reform_llobj}
    \text{with} \; & w^* ,b^* ,\xi^* = \arg\min_{w ,b ,\xi} \frac{1}{2}\|w\|^2 \\
    \label{eq: svm_reform_llcons}
    & \qquad \qquad \qquad \text{s.t. }~ l_{{\rm tr}, i}(z_{{\rm tr}, i}^\top w +b) \geq 1 - \xi_i \quad \forall \; i \in \{1,\ldots,|\mathcal{D}_{\rm tr}|\} \\
    & \qquad \qquad \qquad \quad \;\;\; \xi_i \leq c_i \qquad \qquad \qquad \qquad 
    \; \forall \; i \in \{1,\ldots,|\mathcal{D}_{\rm tr}|\}.\label{eq:CC_SVM}
\end{align}
\end{subequations}
The first term of upper-level objective \eqref{eq: svm_reform_ulobj} is a validation loss, evaluated on the validation set $\mathcal{D}_{\rm val}$, the second is a regulation term on the upper-level variable $c$; the lower-level problem is designed to train the parameters of the hyperplane on the training set $\mathcal{D}_{\rm tr}$, with the soft margin violation $\xi_i$ being upper bounded by the hyperparameter $c_i$ [cf. \eqref{eq:CC_SVM}, which are the CCs].
 The lower-level objective \eqref{eq: svm_reform_llobj} focuses on maximizing the margin by minimizing $\|w\|^2$ while allowing for violations $\xi$ to the separating hyperplane, which are regulated by the hyperparameter $c$. The BLO formulation aims to adjust the hyperparameter $c$ using the validation loss (upper-level objective), ensuring that the model parameters (lower-level variables) are optimal for the training dataset.

\subsection{Experiment details} \label{S:hpo_additional_exps}
In this section, we present detailed experimental results for the training of the SVM model in \eqref{eq: svm_reform} using our BLOCC algorithm. We compare our algorithm to two baselines, LV-HBA~\citep{yao2024constrained} and GAM~\citep{xu2023efficient}, both designed for BLO problems with inequality CCs. We use $\gamma = 12$ and $\eta=0.01$ for running our BLOCC and we apply $\alpha = 0.01$, $\gamma_1 = 0.1$, $\gamma_2 = 0.1$, $\eta = 0.001$ for LV-HBA and $\alpha = 0.05$, $\epsilon = 0.005$ for GAM respectively. The hyper-parameters for LV-HBA and GAM are the ones used in the SVM experiments in their paper. The GAM algorithm often encounters issues with matrix inversion, as the matrix required for the GAM algorithm to solve the problem in \eqref{eq: svm_reform} can be singular, preventing it from finding a solution. Consequently, only BLOCC and LV-HBA solve the problem in \eqref{eq: svm_reform}, while GAM uses a different formulation as introduced in \citep{xu2023efficient}. We compare the algorithms based on validation loss (the first term in the upper-level objective) and classification accuracy. These metrics are computed for both validation and test datasets. 
We evaluate the algorithms on two datasets: diabetes~\citep{Dua:2017} and fourclass~\citep{ho1996building}. 

The detailed results are shown in Figure \ref{fig: diabetes} and \ref{fig: fourclass}. Ten realizations of the problem are run, each with different training and validation sets. The plots show both the mean (line) and standard deviation (shaded region). 
The results reveal that our algorithm converges faster in terms of accuracy and loss, achieving a lower loss value than the alternatives for both datasets, in validation and test sets. Furthermore, in terms of accuracy, we observe that for the diabetes dataset, our algorithm reaches a higher accuracy value faster than the alternatives in both validation and test sets.

\begin{figure}[t]
    \vspace{-0.2cm}    \centering
    \begin{subfigure}[h]{0.3\textwidth}
         \centering
        \includegraphics[width=\textwidth]{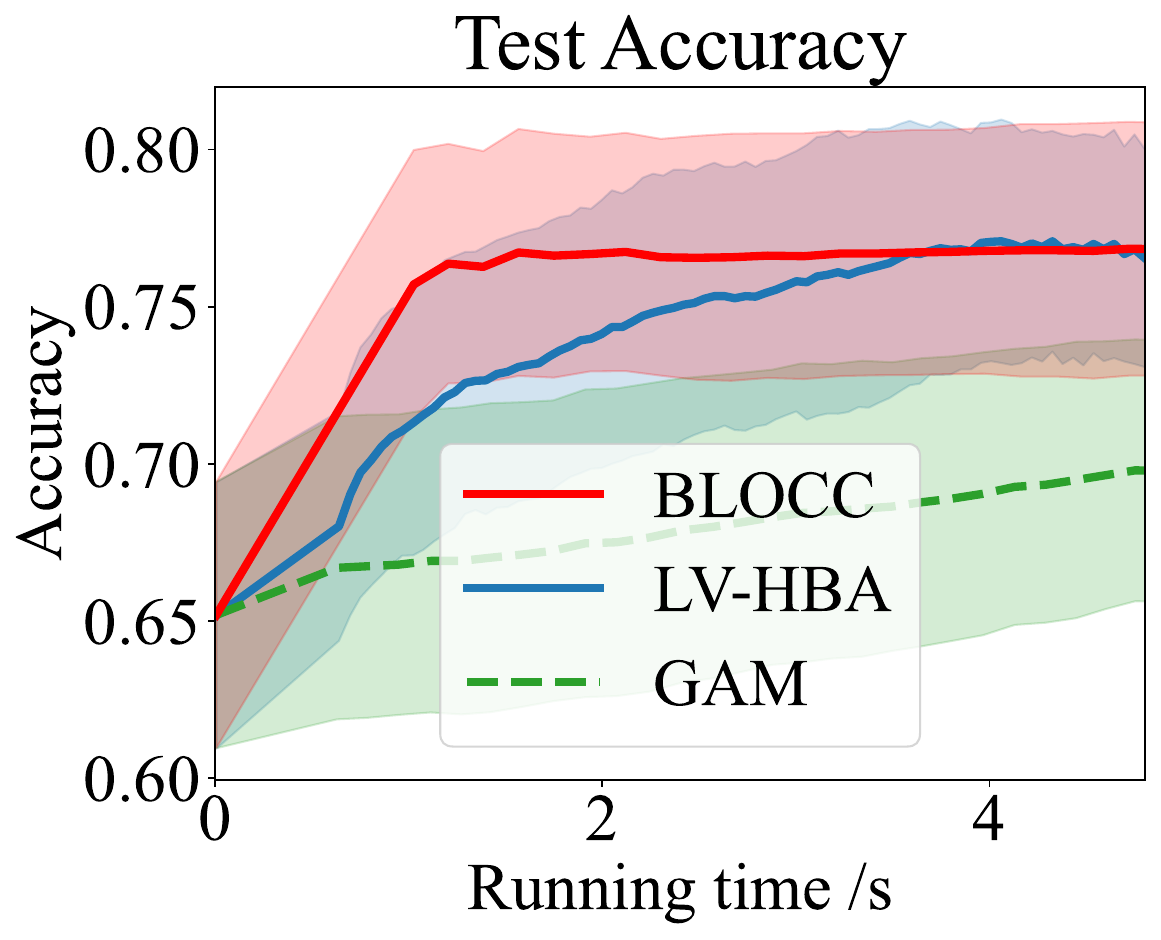}
         \caption{Test accuracy.}
         \label{fig:val_loss_diabetes}
    \end{subfigure}
    \begin{subfigure}[h]{0.3\textwidth}
         \centering
        \includegraphics[width=\textwidth]{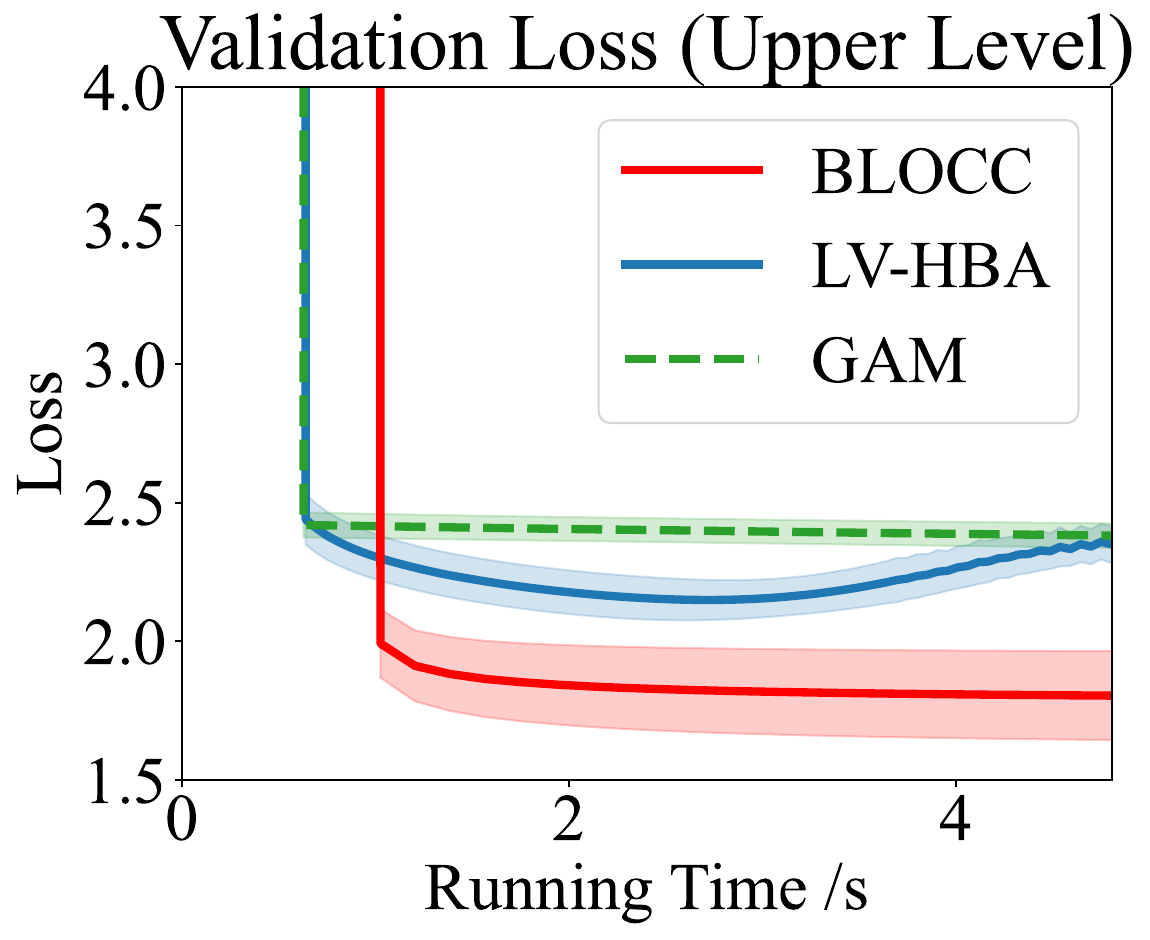}
        \caption{Validation loss.}
         \label{fig:test_loss_diabetes}
    \end{subfigure}
    \begin{subfigure}[h]{0.3\textwidth}
         \centering
        \includegraphics[width=\textwidth]{images/SVM_exp/diabetes/lower_obj_iteration.pdf}
         \caption{Lower-level objective.}
         \label{fig:val_acc_diabetes}
    \end{subfigure}
    \caption{Plots for diabetes dataset}
    \label{fig: diabetes}
        \vspace{-0.2cm}
\end{figure}

\begin{figure}[t]
    \vspace{-0.2cm}   
    \centering
    \begin{subfigure}{0.3\textwidth}
         \centering
        \includegraphics[width=\textwidth]{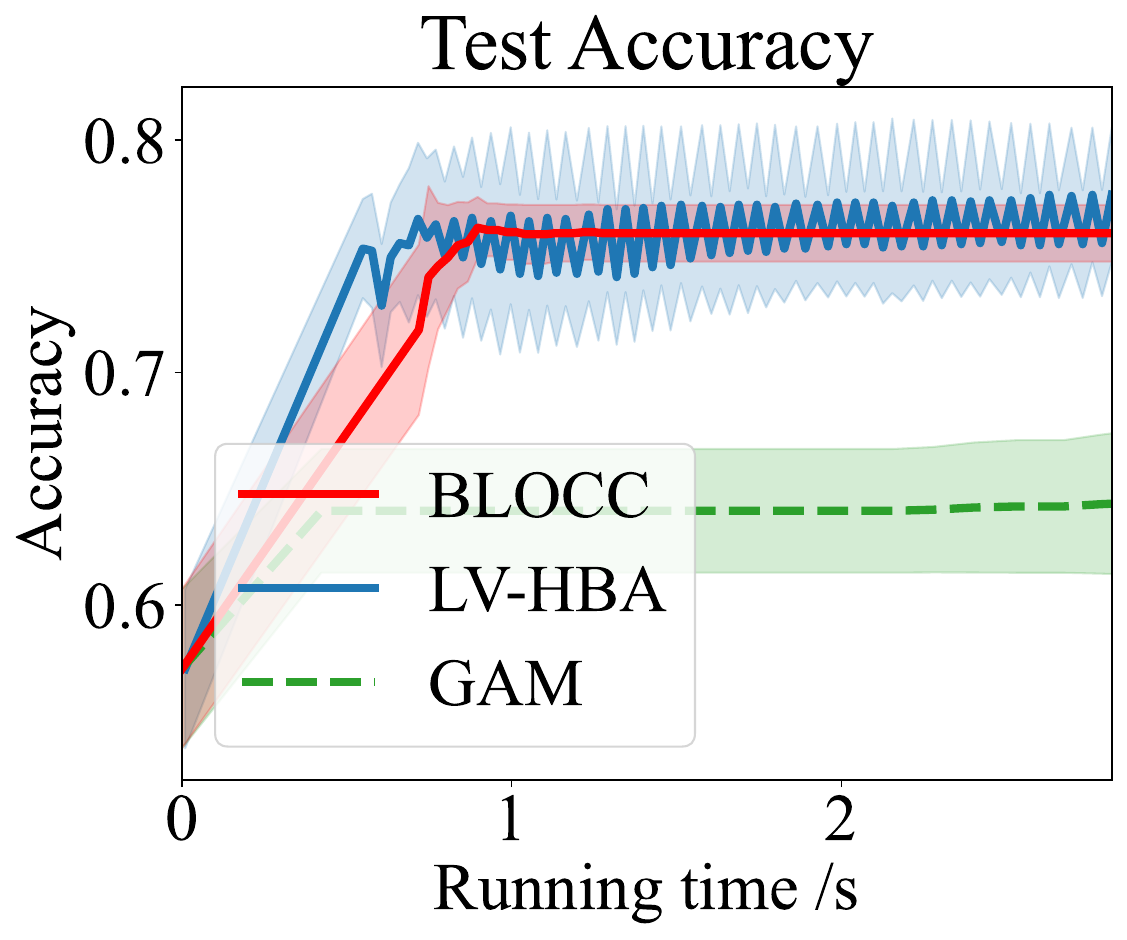}
         \caption{Test accuracy.}
         \label{fig:val_loss_fourclass}
    \end{subfigure}
    \begin{subfigure}{0.3\textwidth}
         \centering
        \includegraphics[width=\textwidth]{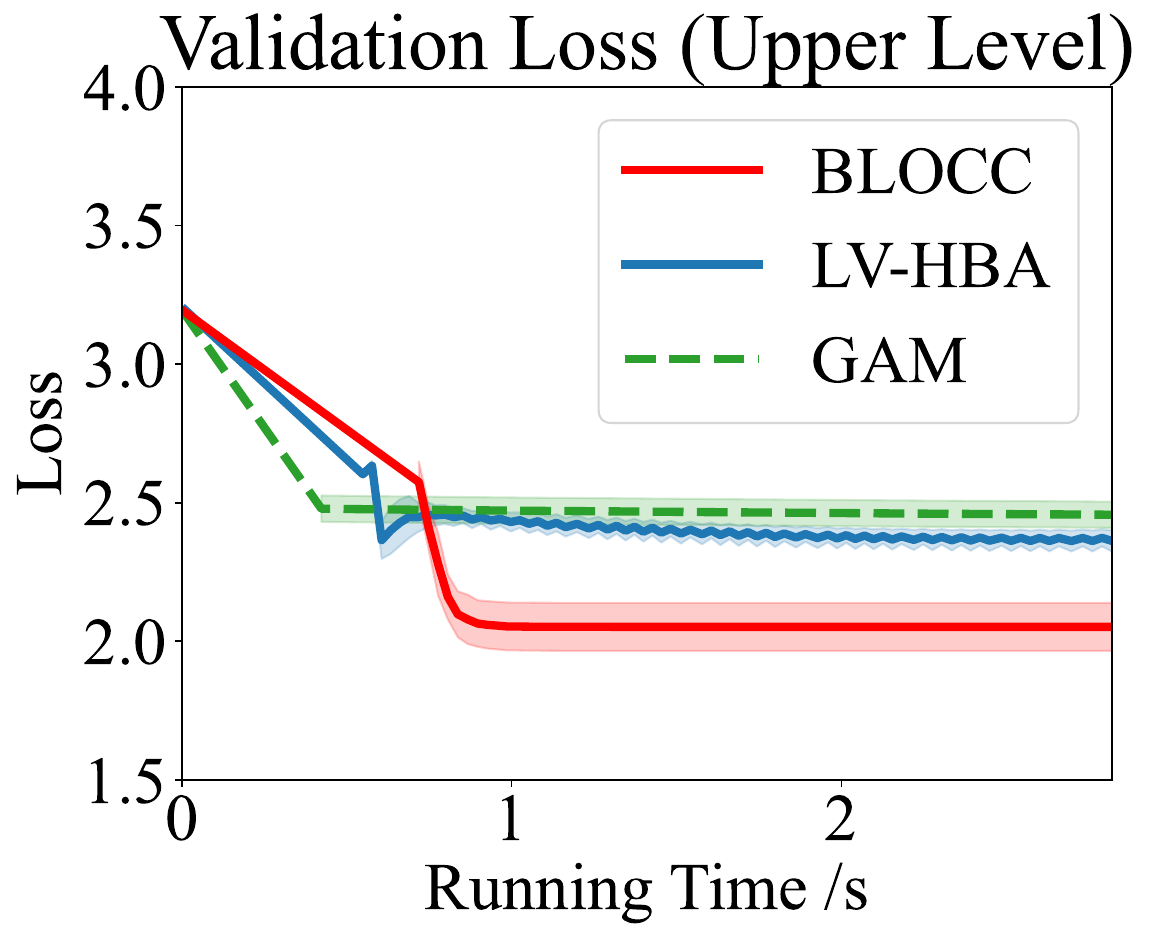}
        \caption{Validation loss.}
         \label{fig:test_loss_fourclass}
    \end{subfigure}
    \begin{subfigure}{0.3\textwidth}
         \centering
        \includegraphics[width=\textwidth]{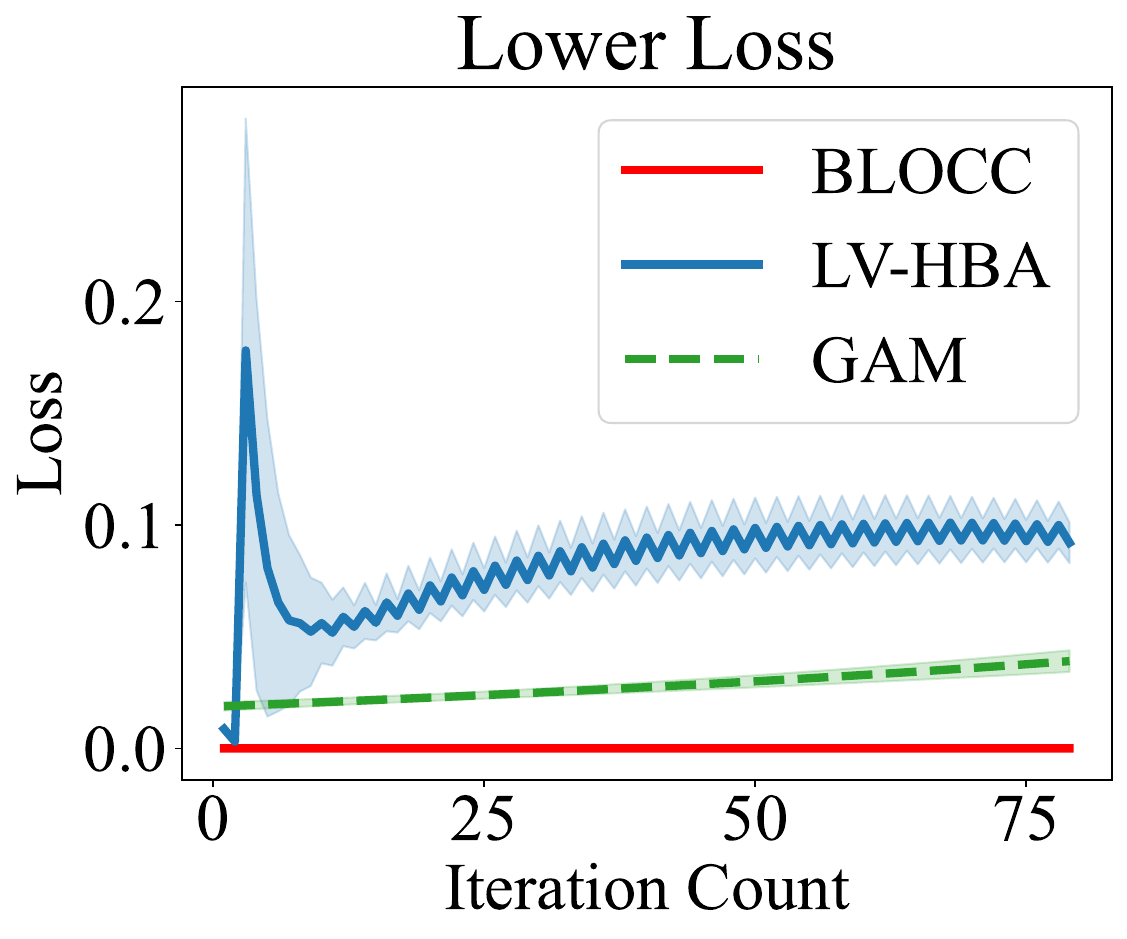}
         \caption{Lower-level objective.}
         \label{fig:val_acc_fourclass}
    \end{subfigure}
    \caption{Plots for fourclass dataset}
    \label{fig: fourclass}
        \vspace{-0.4cm}
\end{figure}

\section{Applications to Transportation Network Planning} \label{appendix:transportation}
This section applies the proposed BLOCC algorithm to a transportation network design problem, comparing it with the baselines from \citep{xu2023efficient} and \citep{yao2024constrained}.

\subsection{Problem formulation} \label{S:problem_intro}
In transportation network planning, the planning operator is to construct a new transportation network (upper level) connecting a collection of stations $\mathcal{S}$, and the passengers will decide whether to use the new network (lower-level), considering the options given by the new network and existing alternatives (constraints). The goal is to design a network that maximizes the operator's benefit, knowing the passengers will make rational choices depending on the given design.

The operator considers building a new network on a collection of links $\mathcal{A} \subseteq \mathcal{S} \times \mathcal{S}$. For any link $(i,j) \in \mathcal{A}$ connecting stations $i\in \mathcal{S}$ to $j\in \mathcal{S}$, the operator needs to design the capacity $x_{ij}$. If the link's capacity is set to zero, then the link is not constructed. 
The larger the capacity, the larger the number of travelers, thus the larger the revenue while the higher the construction cost $c_{ij}$.

The passengers are in demand to travel in the network $\mathcal{K}\subseteq \mathcal{S} \times \mathcal{S}$. For every origin-destination pair $(o,d)$, the traffic demand $w^{od}$ and the traffic time on the existing route $t_{\rm ext}^{od}$ is known. We assume that there is only \textit{one} existing network. The proportion of passengers choosing the new network $y^{od}$ and $y^{od}_{ij}$ the fraction of passengers using link $(i,j)$ to travel from $o$ to $d$ will be determined following passengers' rational choice, modeled by logit choice model \citep{ben1985discrete, cascetta2009transportation} that will be explained later. If $y_{ij}^{od}=0$, then the link does not belong to the route that passengers follow to travel from $o$ to $d$. 
 
 In a) of Figure \ref{fig:tp_net_example_9nodes}, the station $\mathcal{S}$ are presented as the dots, and links $\mathcal{A}$ are as the dashed lines (input topology); b) of Figure \ref{fig:tp_net_example_9nodes} shows a heatmap for the demand matrix for each $(o,d)\in \mathcal{K}$ (input to the design); c) is an example of the constructed network connecting some of the stations (output of the design); and d) illustrates the number of passengers using the constructed network (design output).

\begin{figure}[t]
    \centering
    \includegraphics[scale=0.75]{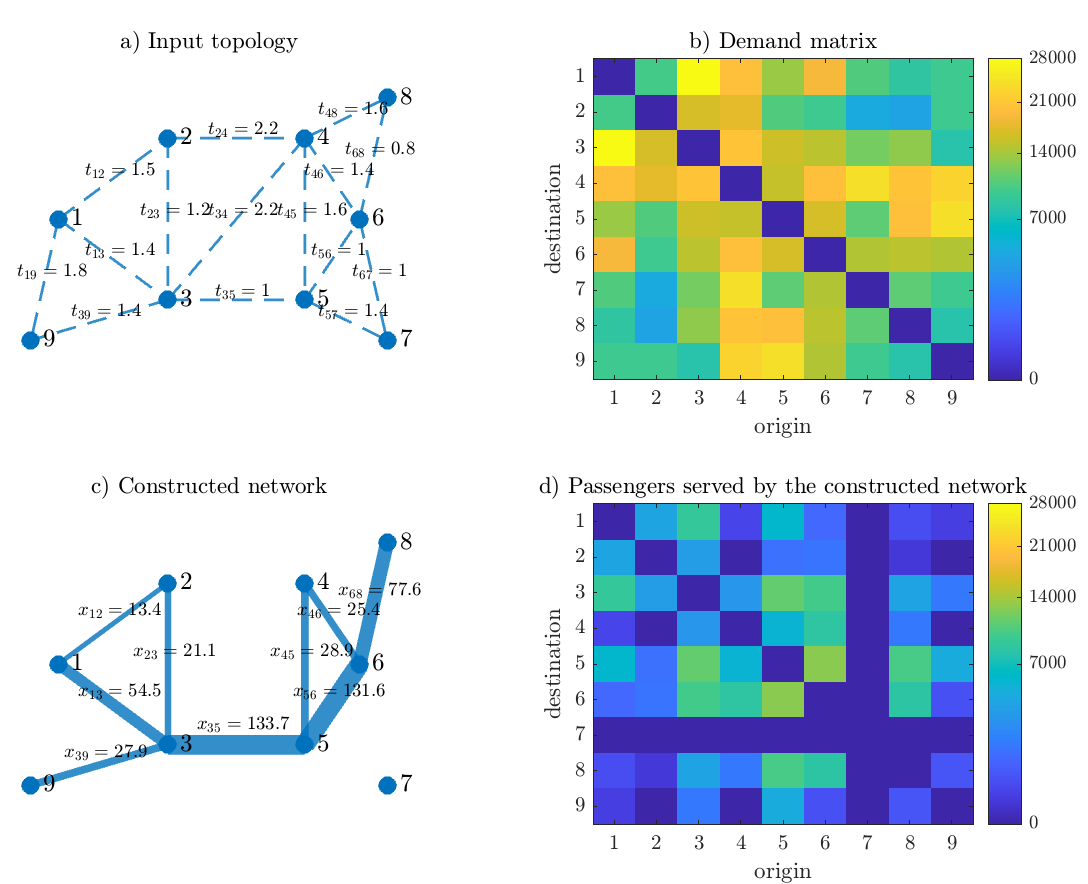}
    \caption{Example of a transportation network design. (a) represents the set of stations $\mathcal{S}=\{1,2,3,4,5,6,7,8,9\}$ and the set of links $\mathcal{A}\subseteq \mathcal{S}\times\mathcal{S}$, where the number of links is $|\mathcal{A}|=30$ (15 segments with two orientations each) and the value on each edge represents the travel time. (b) represents the demand $\{w^{od}\}$ between all $(o,d)$ pairs, where $|\mathcal{K}|=9\times 8=72$ values are provided (those in the off-diagonal elements of the heatmap). (c) represents the constructed network, where, to facilitate visualization, we have assumed that the capacity is symmetric and used the width of the edge to represent the capacity of every link. (d) represents $\{w^{od}y^{od}\}_{\{\forall (o,d)\in \mathcal{K}\}}$, the number of passengers served by the constructed network. }
   \label{fig:tp_net_example_9nodes}
\end{figure}

To summarize, in this experiment, we assume that
\begin{enumerate}
    \item[\bf A1)] Only one existing alternative network exists;
    \item[\bf A2)] Passengers decide whether to use our network rationally, which is modeled by the logit choice model considering a utility depends on travel attributes (travel time) \citep{ben1985discrete, cascetta2009transportation}; and,
    \item[\bf A3)] The demand per market, the trip prices, and the travel times per link are known to operators.
\end{enumerate}
% Mathematically, the capacity for the link $(i, j) \in \mathcal{A}$ is denoted as $x_{ij}$. The capacities for all the links constitute the upper-level variables. The variables of the lower-level, often referred to in the literature as flows, are $y^{od}$ for all $(o,d) \in \mathcal{K}$ and $y_{ij}^{od}$ for all $(i, j) \in \mathcal{A}$ and $(o,d) \in \mathcal{K}$. The former represents the fraction of passengers of market $(o,d)$ who will travel using the new network. The latter represents the fraction of passengers of market $(o,d)$ who will travel using the link $(i,j)$ of the new network. Clearly, if $y_{ij}^{od}=0$, then the link does not belong to the route that passengers follow to travel from $o$ to $d$. Summarizing, the optimization variables are:
We summarize the optimization variables as follows
\begin{itemize}
    \item $x_{ij} \in \mathbb{R}_+$, the capacity constructed for the link $(i,j) \in \mathcal{A}$. The number of capacity variables is $|\mathcal{A}|$. The link is not constructed if $x_{ij}=0$.
    \item $y^{od} \in [0,1]$, the fraction (proportion) of passengers from market $(o,d) \in \mathcal{K}$ choosing the new network for their travel. The number of flow variables is $|\mathcal{K}|$. Since we consider only one competitor, $1-y^{od}$ represents the fraction of incumbent network passengers. If $y^{od}=0$, then the passengers of market $(o,d)$ do not use the new network.
    \item $y_{ij}^{od} \in [0,1]$, the fraction of passengers from market $(o,d) \in \mathcal{K}$ that, when choosing the new network to travel from $o$ to $d$, use the link $(i,j) \in \mathcal{A}$ in their route to the destination. The number of link flow variables is $|\mathcal{A}||\mathcal{K}|$. With this definition, it holds that $y_{ij}^{od} \leq y^{od}$; and, if $y_{ij}^{od}=0$, then link $(i,j)$ does not belong to the route when traveling from $o$ to $d$.
\end{itemize}

To simplify the notation and to make it consistent with the notations used in the paper, we denote
\begin{itemize}
    \item $x=\{x_{ij}\}_{\forall (i,j)\in\mathcal{A}}$ represents the upper-level variables to be optimized. 
    \item $\mathcal{X} = \mathbb{R}^{|\mathcal{A}|}_+$ represents the domain of $x$.
    \item $y=\{y^{od}, \{y_{ij}^{od}\}_{\forall (i,j)\in\mathcal{A}}\}_{\forall (o,d)\in\mathcal{K}}$ represents the lower-level variables to be optimized
    \item $\mathcal{Y} = [\varepsilon,1-\varepsilon]^{|\mathcal{K}|}\times[\varepsilon,1-\varepsilon]^{|\mathcal{A}||\mathcal{K}|}$, where $\varepsilon$ is a small positive number set by the designer of the network, represents the domain of $y$.
\end{itemize}

Besides the optimization variables, our objective and constraints include the following parameters 
\begin{itemize}
    \item $w^{od}$, the total estimated demand (number of passengers) for the market $(o,d) \in \mathcal{K}$.
    \item $m^{od}$, the revenue obtained by the operator from a passenger in the market $(o,d) \in \mathcal{K}$.
    \item $c_{ij}$, the construction cost per passenger associated with link $(i,j) \in \mathcal{A}$.
    \item $t_{ij}$, the travel time for link $(i,j) \in \mathcal{A}$.
    \item $t_{\rm ext}^{od}$, travel time on the alternative network for passengers in the market $(o,d) \in \mathcal{K}$.
    \item $\omega_t<0$, the coefficient associated with the travel time in passengers' utility function. 
\end{itemize}

In transportation network design, a bilevel formulation is essential due to the interaction between two players with different levels of influence: the operator, who constructs the network, and the passengers, who choose their routes based on the network's characteristics. The operator's goal is to maximize their benefit by minimizing construction costs and maximizing attracted demand, with link capacity as the optimization variable. Conversely, passengers aim to maximize their trip utility, determining the proportion of demand using each link. This dual optimization requires that passenger choices comply with link capacity constraints set by the operator, coupling the variables at both levels. This necessitates a bilevel optimization approach, specifically using BLOCC, to address the interdependent decisions and constraints effectively.

Now, we are ready to introduce the objective formulations of our BLO problem. For the upper level, the network operator aims to maximize profits and minimize costs, and therefore, its interest is
\begin{equation}\label{eq:appTransSingleEntropy}
\min_{x\in \mathcal{X}}f(x,y^*_g(x)) := - \Bigg( \underbrace{\sum_{\forall (o,d) \in \mathcal{K}}m^{od}y^{\rm od*}(x) }_{\text{profit}} -  \underbrace{\sum_{\forall (i,j) \in \mathcal{A}}c_{ij}x_{ij} }_{cost} \Bigg),    
\end{equation}
where $y^{\rm od*}(x)$ are optimal lower-level passenger flows associated with the network design $x$. 

For the lower-level, we model the passenger's behavior by finding the flow variables that maximize utility and minimizes flow entropy cost.
\begin{align}\label{eq:LowerLevelObjectiveTransportation}
    \min_{y \in \mathcal{Y}} g(x,y):= & -\Bigg( \underbrace{\sum_{(o,d) \in \mathcal{K}}\sum_{(i,j) \in \mathcal{A}} w^{od}\omega_tt_{ij}y_{ij}^{od} + \sum_{(o,d) \in \mathcal{K}} w^{od}\omega_tt_{\rm ext}^{od} (1-y^{od})}_{\rm passengers~ utility}  \\ 
      & \qquad + \underbrace{\sum_{(o,d) \in \mathcal{K}} w^{od}y^{od}(\ln (y^{od}) - 1)  +  \sum_{(o,d) \in \mathcal{K}} w^{od}(1-y^{od})(\ln (1 - y^{od} ) - 1)}_{\rm flow~ entropy~cost}\Bigg). \nonumber 
\end{align}

% The flow entropy cost considers the Legendre transform (cf. Definition \ref{def: conjugate function}) of the logistic probability as the Legendre transform of an exponential $e^y$ is the negative entropy function $y(\ln(y) - 1)$
% (see \citep{oppenheim1993equilibrium,realrojas2024asilomar,realrojas2024thesisAerospaceEng} for additional motivation and guarantees for this approach).

The passengers' utility considers the time cost of choosing the new network and the existing network. The users set the flow variables $y$ so that the transportation network yielding a higher utility is preferred. %, with the probabilities being given by a logit (softmax) model. 
Here, the probability of chosing new network is modeled by a logistic (softmax) model.
However, setting the objective as a simple linear utility maximization would lead to an all-or-nothing policy, which is not the behavior observed in practice. Hence, the second term is brought to consider.
The approach considered here is to formulate an objective given by the Legendre transform (cf. Definition \ref{def: conjugate function}) of the softmax sharing (see \citep{oppenheim1993equilibrium,realrojas2024asilomar,realrojas2024thesisAerospaceEng} for additional details). Intuitively, this means that rather than imposing the softmax sharing a fortiori, we formulate a \textit{convex} problem whose KKT conditions lead to the softmax sharing. Using the fact that the Legendre transform of an exponential $e^y$ is the negative entropy function $y(\ln(y) - 1)$, the lower-level objective is traditional as in \citep{realrojas2024asilomar}.

% The passengers' utility considers a sensitivity parameter $\omega_t<0$ and the travel time $t_{ij}$; the second term is the Legendre transform (cf. Definition \ref{def: conjugate function}) of the likelihood in the logistic model. Since we have $y^{od}+(1 - y^{od} )=1$, it can be shown that taking the derivative of \eqref{eq:LowerLevelObjectiveTransportation} with respect to $y^{od}$ and setting the gradient to zero will lead to the desired softmax sharing \citep{realrojas2024thesisAerospaceEng}. 

Having introduced the optimization variables, parameters, and objective functions, we next formulate our BLO problem, where we also incorporate the network constraints:
\begin{subequations} \label{eq:bilevel_tp}
\begin{align}
    \label{eq:upper_level_ob}
    &\qquad \min_{x \in \mathcal{X}} \;  \;  -\sum_{\forall (o,d) \in \mathcal{K}}m^{od}y^{\rm od*} + \sum_{\forall (i,j) \in \mathcal{A}}c_{ij}x_{ij} && \\ \label{eq:lower_level_ob}
     & \text{s.t.}~  (y^{\rm od*},y_{ij}^{\rm od*}) = \arg \min_{y \in \mathcal{Y}}  {-\sum_{(o,d) \in \mathcal{K}}\sum_{(i,j) \in \mathcal{A}} w^{od}\omega_tt_{ij}y_{ij}^{od} - \sum_{(o,d) \in \mathcal{K}} w^{od}\omega_tt_{\rm ext}^{od} (1-y^{od})} && \\ 
    \nonumber
     &~~~~~~~~~~~~~~~~~~~~~~~~~~~~~~~~ + {\sum_{(o,d) \in \mathcal{K}} w^{od}y^{od}(\ln (y^{od}) - 1)  +  \sum_{(o,d) \in \mathcal{K}} w^{od}(1-y^{od})(\ln (1 - y^{od} ) - 1)} &&\\
     \label{eq:flow_conserv_cons}
    & \qquad\qquad \text{s.t.}\;\; \sum_{\forall j | (i,j) \in \mathcal{A}}y_{ij}^{od} - \sum_{\forall j | (j,i) \in \mathcal{A}}y_{ji}^{od} = 
     \begin{cases}
         y^{od} \;\quad \text{if} \; i=o \\ -y^{od} \;\ \text{if} \; i = d \\ 0 \;\;\qquad \text{otherwise}
     \end{cases}\forall i,(o,d) \in \mathcal{S} \times \mathcal{K} &&
     \\ \label{eq:capacity_cons}
     & \qquad\qquad\;\;\;\;\;\;\sum_{\forall (o,d) \in \mathcal{K}}w^{od}y_{ij}^{od} \leq x_{ij} \qquad\qquad\;\; \forall (i,j) \in \mathcal{A} &&  
\end{align}
\end{subequations}
where \eqref{eq:flow_conserv_cons} are the flow-conservation constraints, \eqref{eq:capacity_cons} are the capacity constraints that involve both upper and lower-level variables, coupling the optimization and motivating the use of BLOCC. Note that, for networks with $n$ stations (nodes): i) the number of CCs is approximately $n^2$; and ii) each of the constraints involves approximately $n^2$ variables. Hence, even for a moderate-size network (say 30-50 nodes), we may have thousands of CCs involving millions of variables.

\paragraph{Experiment roadmap.}
To provide numerical results illustrating the behavior of our algorithm, we solve the optimization in \eqref{eq:bilevel_tp} for three scenarios:
\begin{enumerate}
    \item[\bf S1)] The design of a 3-node simple synthetic network;
    \item[\bf S2)] The design of a 9-node synthetic network from the prior transportation literature; and
    \item[\bf S3)] The design of a (real-world) subway network for the city of Seville, Spain, with 24 nodes
\end{enumerate}   
where {\bf S1)} involves 6 CCs and 48 variables, and {\bf S3)} involves around 100 CCs and 50,000 variables.

For the 3-node network scenario, we will conduct a comparative analysis against other algorithms to evaluate the efficacy of our approach. In the other two scenarios, the baseline algorithms cannot find a solution; hence, for the 9-node and Seville networks, we will focus on providing insights into the performance and behavior of our algorithm under varying parameters, shedding light on the versatility and adaptability of our approach to real-world transportation networks.

Before delving into the presentation and analysis of the results, two additional remarks are in order:
\begin{itemize}
    \item[\bf R1)] While one of the goals of these experiments was to compare our BLOCC algorithm against LV-HBA \citep{yao2024constrained} and GAM \citep{xu2023efficient}, for the scenario at hand, the GAM algorithm cannot be implemented, since the inverse of a matrix at each iteration for the problem in \eqref{eq:bilevel_tp} is not tractable. In this way, we only conducted the experiments using our BLOCC and LV-HBA.
    \item[\bf R2)] The BLOCC algorithm produces two lower-level variables: $y_{F,T}^{T_F}$ and $y_{g,T}^{T_g}$. While Theorems \ref{thm: lagrangian reformulation of penalty} and \ref{thm: main result} guarantee that $(x_T,y_{F,T}^{T_F})$ is an $\epsilon$-approximate solution, the pair $(x_T,y_{g,T}^{T_g})$ is strictly feasible, meaning that $y_{g,T}^{T_g}=y_{g}^*(x_T)$. Since strict feasibility is important in the transportation network, this section will report the results using both $y_{F,T}^{T_F}$ and $y_{g,T}^{T_g}$.
\end{itemize}

%%%%% 3 node comparison figure

\begin{figure}[t]
\begin{minipage}{0.49\textwidth}
    \centering
    \includegraphics[width=0.9\textwidth]{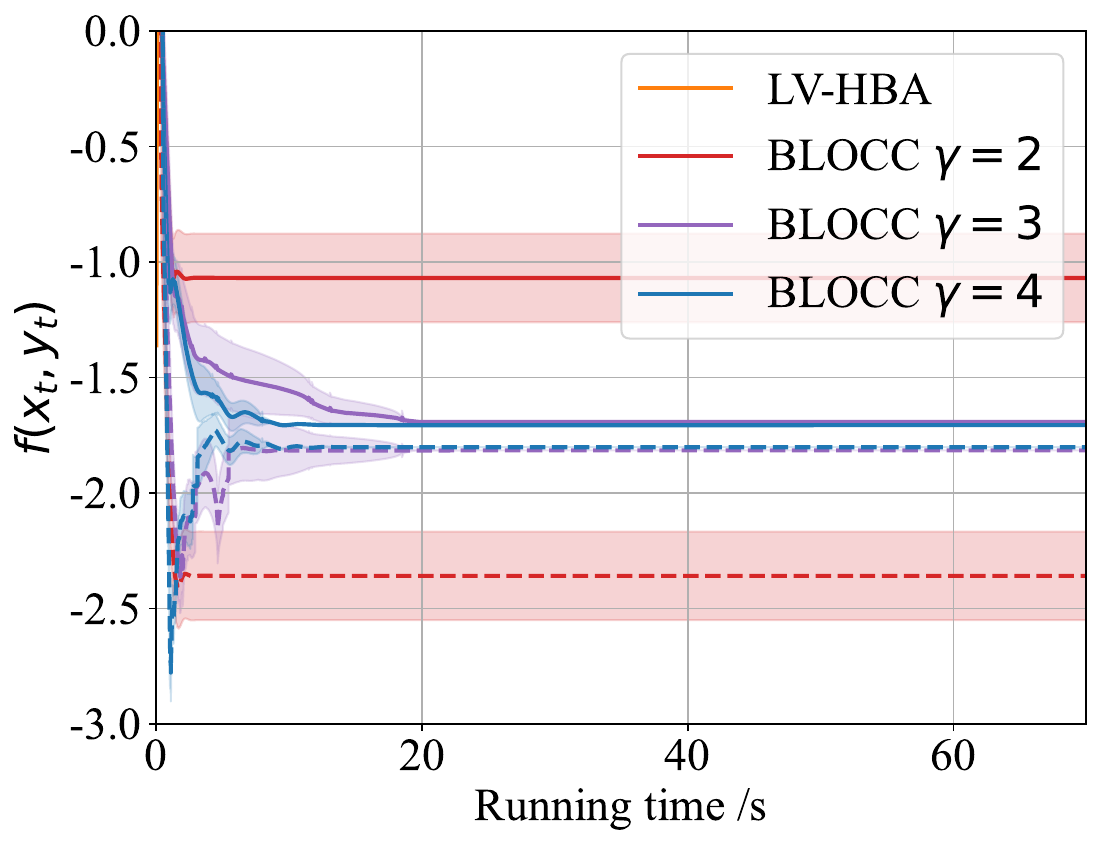}
   \caption{
    Upper-level objective $ f(x_t,y_t) $ for a 3-node network design problem; Solid lines show mean value of $f(x_t,y_{g,t}^{T_g})$ with the shaded region as a standard deviation;  Dashed lines show the mean value of $f(x_t,y_{F,t}^{T_F})$ with the shaded region as a standard deviation; Three different $ \gamma $ values (red, purple, blue) and fixed stepsize $\eta = 1.6\times 10^{-4}$; The orange color represents the result of the LV-HBA algorithm.
   }
   \label{fig:3node}
\end{minipage}%
    \hfill
\begin{minipage}{0.49\textwidth}
\includegraphics[width=0.9\textwidth]{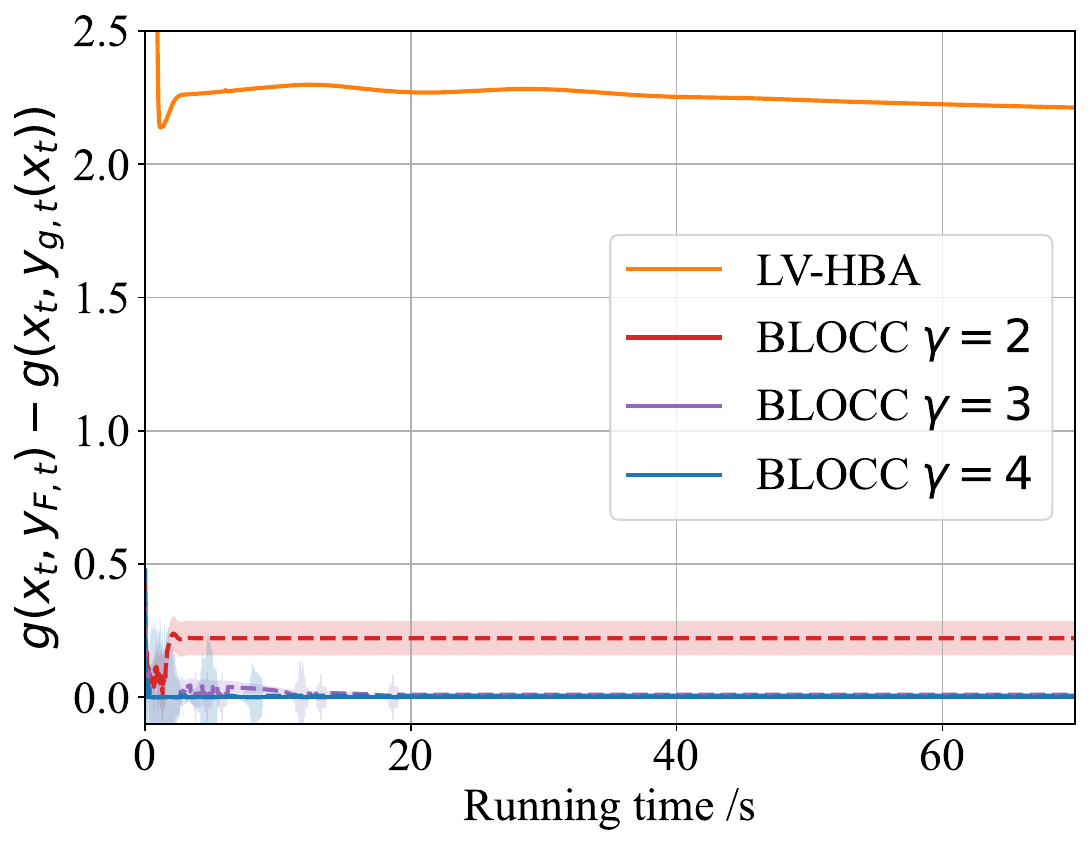}
   \caption{
   Optimality gap of the lower-level problem for a 3-node network design problem $g(x_t,y_t)-g(x_t,y^*_t)$. Solid lines represent the mean value of the 10 realizations of the upper-level variables, dashed lines represent $g(x_t,y_{F,t}^{T_F})-g(x_t,y^*_t)$, and the shaded region is the standard deviation. Three different $ \gamma $ values are represented in our algorithm, and fixed stepsize $\eta = 1.6 \times 10^{-4}$.  
   }
   \label{fig:op_loss_ll}
\end{minipage}%
\end{figure}

\subsection{Numerical results for the 3-node network } \label{S:3node}

\begin{table}[t]
\fontsize{9}{9}\selectfont
    \centering
    \tabcolsep=0.11cm
    \begin{tabular}{c||c|c|c|c|c|c}
    \hline\hline
     & \multicolumn{6}{c}{Market $(o,d)$} \\ \hline
         \textbf{Parameter}   & (1,2) & (1,3) & (2,1) & (2,3) & (3,1) & (3,2) \\ \hline
         demand $w^{od}$ & $1$ & $1$ & $1$ & $1$ & $1$ & $1$ \\ \hline 
         revenue $m^{od}$ & $2$ & $6$ & $2$ & $1$ & $6$ & $1$ \\ \hline 
         travel time incumbent $t_{\rm ext}^{od}$ & $3$ & $3$ & $3$ & $3$ & $3$ & $3$ \\
           \hline\hline
             & \multicolumn{6}{c}{Link $(i,j)$} \\ \hline
    \textbf{Parameter}  & (1,2) & (1,3) & (2,1) & (2,2) & (3,1) & (3,2) \\ \hline
    link construction cost $c_{ij}$   &  $1$ & $10$ & $1$ & $3$ & $10$ & $3$ \\ \hline    
    link travel time $t_{ij}$ & $1$ & $10$ & $1$ & $2$ & $10$ & $2$ \\
    \hline\hline
    \end{tabular}
    \vspace{0.2cm}
    \caption{Value of the parameters for scenario 1 (3-node network). The value of $\omega_t$ is set to $0.1$.}
    \label{tab:exp_tp_params_scenario1}
     \vspace{-0.4cm}
\end{table}

In this section, we address the problem defined in equation \eqref{eq:bilevel_tp} for a network comprising 3 nodes (stations). We assume that the graph of potential links $\mathcal{A}$ is complete, leading to a total of 6 link capacities that need to be determined in the upper level. Additionally, as in the rest of the manuscript, we assume that there is demand for all markets, meaning that the set $\mathcal{K}$ is complete and includes all 6 possible origin-destination pairs. The specific values of the key parameters can be found in Table \ref{tab:exp_tp_params_scenario1}, with further details about the simulated scenario available in the online code repository.

For BLOCC, we set the stepsize to $\eta = 1.6 \times 10^{-4}$ and analyze the algorithm's convergence for three different values of $\gamma$: $\gamma = 2$, $\gamma = 3$, and $\gamma = 4$. The upper-level objective values are computed using $f(x_t,y_{g,t}^{T_g})$ and $f(x_t,y_{F,t}^{T_F})$. The results are presented in Figures \ref{fig:3node} and \ref{fig:op_loss_ll}.

 Figure \ref{fig:3node} illustrates the performance of our BLOCC algorithm over time. The orange line represents the evolution of $f(x_t,y_t)$ for the LV-HBA algorithm, while the other six lines represent different implementations of BLOCC. Each color represents a different value of $\gamma$; solid lines represent $f(x_t,y_{g,t}^{T_g})$ and dashed lines represent $f(x_t,y_{F,t}^{T_F})$. Each simulation is conducted 10 times with 10 different random initializations of the upper-level variables, and both the mean and the standard deviation values are displayed. The results indicate that all versions of our BLOCC algorithm converge in less than 10 seconds, while LV-HBA shows slight fluctuations even after running for more than 50 seconds. This may be attributed to the fact that the LV-HBA algorithm requires a joint projection into $\{\mathcal{X} \times \mathcal{Y}: g^c(x,y) \leq 0\}$ at each iteration, involving 42 variables and 6 CCs.

Figure \ref{fig:op_loss_ll} shows the lower-level optimality gap, namely $g(x_t,y_t) - g(x_t,y^*(x_t))$ for LV-HBA and BLOCC with 3 different values of $\gamma$. In the case of LV-HBA, lower-level optimality is not attained within 70 seconds of running time. Conversely, for BLOCC, lower-level optimality is achieved by construction when $y_t$ is set to $y_{g,T}^{T_g}$, resulting in a zero gap. Additionally, when $y_t$ is set to $y_{F,T}^{T_F}$, we observe that: i) lower-level optimality is accomplished for $\gamma \geq 3$, and ii) when $\gamma=2$, an optimality gap exists, but it is one order of magnitude smaller than that for LV-HBA.

\begin{figure}[t]
\begin{minipage}{0.49\textwidth}
    \centering
    \includegraphics[width=0.9\textwidth]{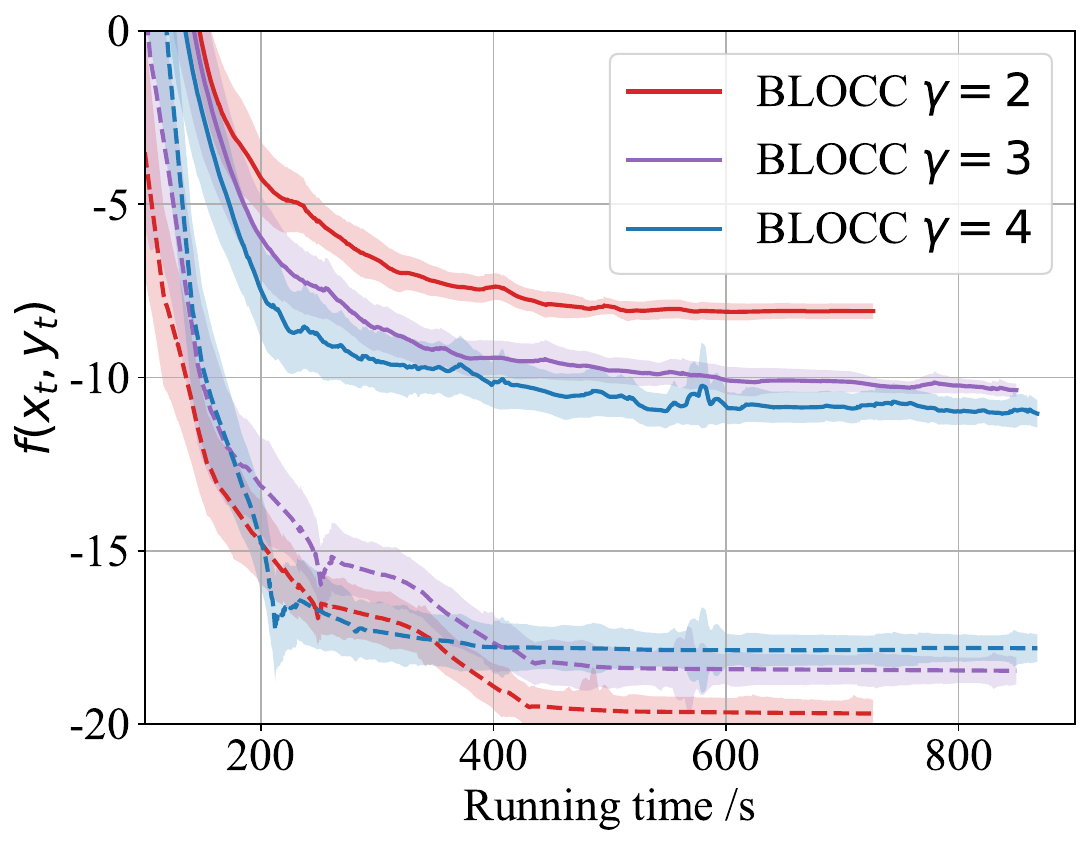}
   \caption{    Upper-level objective $ f(x_t,y_t) $ for a 9-node network design problem; Solid lines show the mean value of $f(x_t,y_{g,t}^{T_g})$ with the shaded region as a standard deviation;  Dashed lines show the mean value of $f(x_t,y_{F,t}^{T_F})$ with the shaded region as a standard deviation; Three different $ \gamma $ values (red, purple, blue) and fixed stepsize $\eta = 1.6 \times 10^{-4}$; The orange color represents the result of the LV-HBA algorithm.}
   \label{fig:9node}
\end{minipage}%
    \hfill
\begin{minipage}{0.49\textwidth}
    \centering
    \includegraphics[width=0.9\textwidth]{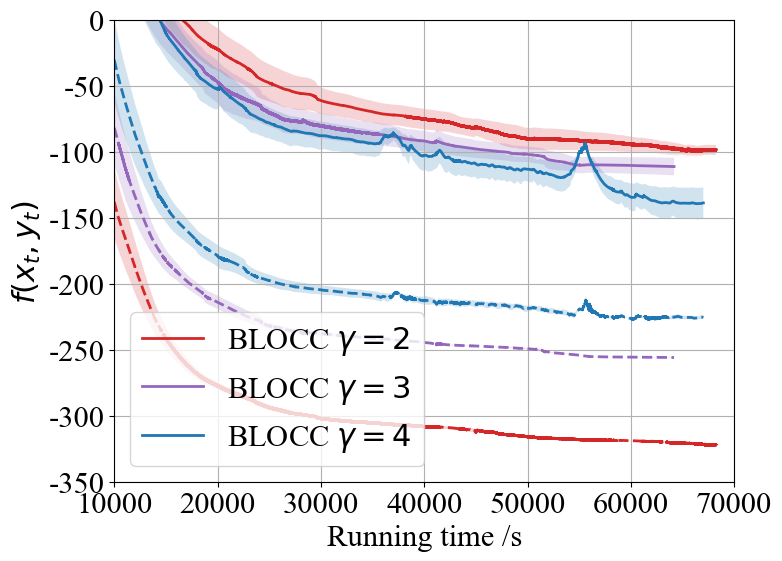}
      \caption{Upper-level objective $ f(x_t,y_t) $ for a metro network design problem in Seville, Spain, for 2 random initializations of the upper-level variables. Solid lines represent the mean of $f(x_t,y_{g,t}^{T_g})$, and the shaded region is the standard deviation. The dashed lines represent the mean of $f(x_t,y_{F,t}^{T_F})$, and the shaded region is the standard deviation. Three different $ \gamma $ values are tested with a fixed stepsize $\eta = 1.6 \times 10^{-4}$.}
   \label{fig:Sevillenet}
\end{minipage}
\vspace{-0.2cm}
\end{figure}

\subsection{Numerical results for the 9-node network} \label{S:9node}

In this case, we consider the network in \citep{Escudero-2009-SevillaNet}, see also Figure \ref{fig:tp_net_example_9nodes}, which has $|\mathcal{S}| = 9$ nodes and $|\mathcal{A}| = 30$ potential links. As before, we consider that all markets exist, so that $|\mathcal{K}| = 9 \cdot 8 = 72$. The remaining parameters are described in Figure \ref{fig:tp_net_example_9nodes} and the code repository.

Figure \ref{fig:9node} is the counterpart of Figure \ref{fig:3node} for the 9-node scenario, showing the behavior of our BLOCC algorithm for $\eta = 1.6 \times 10^{-4}$ and $\gamma \in \{2,3,4\}$. Each simulation is repeated 10 times (using 10 different random initializations of the upper-level variables), and both the mean and the standard deviation values are shown. Since the number of variables and constraints is almost one order of magnitude larger, the algorithm requires more time to converge. However, convergence takes place in a reasonable amount of time (20-40 times longer than in the previous 3-node test case). Regarding the optimal value, we observe that: i) the sensitivity of the (steady-state) optimal value with respect to $\gamma$ is not too large; ii) the solutions based on $y_{F,T}^{T_F}$ yield better upper-level values than those based on $y_{g,T}^{T_g}$; and iii) the gap between $f(x_T,y_{g,T}^{T_g})$ and $f(x_T,y_{F,T}^{T_F})$ decreases as $\gamma$ increases. Observation ii) is due to the fact that $y_{F,T}^{T_F}$ is not feasible (meaning that it violates the optimality of the lower-level); hence, it is able to achieve a better upper-level objective. In addition, the behavior observed in iii) is consistent with the discussion in Section \ref{sec: Lagrangian Reformulation}, with the value of $\gamma$ having an impact on the suboptimality of $y_{F,t}^{T_F}$ at the lower-level. Specifically, higher values of $\gamma$ push $y_{F,t}^{T_F}$ closer to $y_{g,t}^{T_g}$ and, hence, decrease the lower-level optimality gap. Finally, we must note that for the solid lines (associated with $y_{g,t}^{T_g}$), it holds that $f(x_t,y_{g,t}^{T_g})=f(x_t,y^*(x_t))$. This implies that if we need a solution that is feasible at the lower-level, then better objective values are associated with higher values of $\gamma$.

\subsection{Numerical results for the Seville network}

\begin{wrapfigure}{r}{0.5\textwidth}
\vspace{-.4cm}
    \centering
    \includegraphics[width=0.49\textwidth]{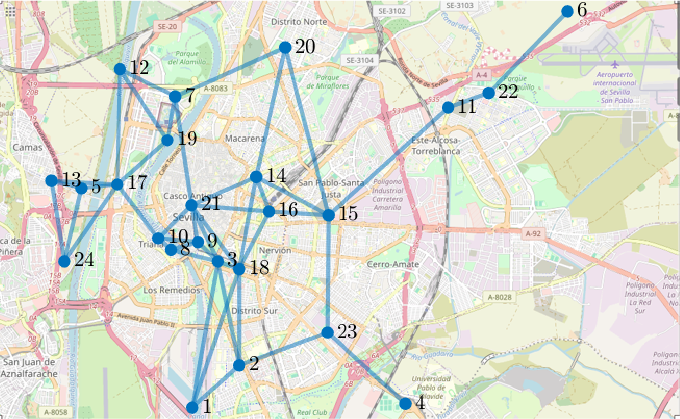}
   \caption{Topology of the Seville network.}
   \label{fig:Sevillemap}
   \vspace{-.4cm}
\end{wrapfigure}
In this section, we demonstrate the practical use of BLOCC in a real transportation network design problem. Specifically, we address the design of a metro network in the city of Seville, which has approximately one million inhabitants and is located in the south of Spain, with the bus system as its competitor. The data for the demand, number of stations, and locations have been taken from \citep{Escudero-2009-SevillaNet}. Information about the construction costs, capacity, and travel time has been obtained from Spanish rapid transit operators (see references in \citep{Cadarso2015,realrojas2024thesisAerospaceEng} for full details). The city authorities considered $|\mathcal{S}| = 24$ potential station locations. Regarding the links, the following assumptions are made:
\begin{enumerate}
    \item[\bf A1)] The link between nodes $(i,j) \in \mathcal{S} \times \mathcal{S}$ only exists if node $j$ is one of the three closest neighbors to $i$, or vice versa. The distance here is measured in terms of travel time. This assumption limits the number of lines in any station to be at most 3, which is a very mild assumption for a network with 24 stations.
   \item [\bf A2)] The link between nodes $(i,j) \in \mathcal{S} \times \mathcal{S}$ only exists if the travel time $t_{ij}$ is less than 7 minutes. This is also a mild assumption, since it enables all the stations to be connected, including those that are further away from the city center (the airport and the university campus \citep{Escudero-2009-SevillaNet}).
\end{enumerate}

Under these two conditions, the set $\mathcal{A}$ of potential links contains $|\mathcal{A}| = 88$ links. The sets of links and stations, along with their actual locations, are shown in Figure \ref{fig:Sevillemap}. Finally, we consider all possible markets between nodes, so $|\mathcal{K}| = 24 \times 23 = 552$.

Following the narrative in Sections \ref{S:3node} and \ref{S:9node}, Figure \ref{fig:Sevillenet} presents the evolution of the upper-level objective function with time for three values of the parameter $\gamma \in \{2,3,4\}$. The stepsize value has been set to $\eta = 1.6 \times 10^{-4}$, and two different initializations have been considered. Regarding the behavior of the algorithm with respect to the value of $\gamma$ and the particular output chosen ($y_{T,g}^{T_g}$ vs. $y_{T,F}^{T_F}$), the findings are very similar to those in Figure \ref{fig:9node}. Namely, smaller gaps are found for larger values of $\gamma$, and if feasibility at the lower-level (i.e., consistency with the user preference level) must be preserved, better objective values are achieved for larger values of $\gamma$.

The most important observation, however, is related to the running time. Specifically, Figure \ref{fig:Sevillenet} reveals that, for this real-world scenario, our BLOCC algorithm converges in 10-20 hours. While this is more than 1,000 times larger than the convergence interval for the 3-node network, the number of variables and constraints here is 100 times larger. More importantly, in the context of network transportation design, optimization times of 100 hours are widely accepted even for single-level formulations. Overall, we believe that the numerical results demonstrate that the BLOCC algorithm proposed in this work can solve problems with a large number of variables and CCs, which can have practical value in real-world applications, such as the one studied in this section.

\section{Sensitivity Analysis}
\label{appendix: Sensitivity Analysis}

Regarding the selection and impact of hyper-parameters on the performance of BLOCC, we conducted an ablation study on various values of the two critical parameters, $\gamma$ and $\eta$, and measured their effects on the optimal value and computational time. In this section, we present the sensitivity analysis on the toy example that we introduced in Section \ref{sec: experiments_toy} and the 3-node network example for the transportation network planning problem in Section \ref{sec: experiments_tp}.

\subsection{Sensitivity analysis for the toy example} 

We present in Table \ref{tab: sensitivity toy} the results of lower-level optimality $\|y_g^*(x_T)-y_{F,T}\|$ using different $\gamma$ and $\eta$ to conduct BLOCC (Algorithm \ref{alg: BLOCC}) on the toy example in Section \ref{sec: experiments_toy}.

\begin{table}[H]
    \centering
    \fontsize{8}{9}\selectfont
    \begin{tabular}{c||c|c|c|c}
    \hline
    \hline
        & \multicolumn{4}{c}{\fontsize{10}{10}\selectfont $\|y_g^*(x_T)-y_{F,T}\|$} \\
        $\eta$ & $\gamma = 0.001$ & $\gamma = 0.01$ & $\gamma = 0.1$ & $\gamma = 1.0$ \\
    \hline
    \hline
        \multirow{2}{*}{0.001} & $0.028 \pm 0.042$ & $0.035 \pm 0.076$ & $0.027 \pm 0.064$ & $0.000 \pm 0.000$ \\
        & ($3.314 \pm 0.042$) & ($3.186 \pm 0.076$) & ($2.049 \pm 0.064$) & ($1.967 \pm 0.000$) \\
    \hline
        \multirow{2}{*}{0.01}  & $0.020 \pm 0.031$ & $0.020 \pm 0.041$ & $0.009 \pm 0.019$ & $0.000 \pm 0.000$ \\
        & ($2.242 \pm 0.031$) & ($1.575 \pm 0.041$) & ($1.118 \pm 0.019$) & ($0.585 \pm 0.000$) \\
    \hline
        \multirow{2}{*}{0.1}   & $0.006 \pm 0.010$ & $0.017 \pm 0.027$ & $0.011 \pm 0.033$ & $0.000 \pm 0.000$ \\
        & ($1.616 \pm 0.010$) & ($1.475 \pm 0.027$) & ($1.257 \pm 0.033$) & ($0.383 \pm 0.000$) \\
    \hline
        \multirow{2}{*}{1.0}   & $0.023 \pm 0.019$ & $0.030 \pm 0.022$ & $0.020 \pm 0.045$ & $0.000 \pm 0.000$ \\
        & ($7.118 \pm 0.019$) & ($4.570 \pm 0.022$) & ($3.477 \pm 0.045$) & ($1.645 \pm 0.000$) \\
    \hline
        \multirow{2}{*}{10.0}  & $0.034 \pm 0.123$ & $0.019 \pm 0.014$ & $0.025 \pm 0.070$ & $0.000 \pm 0.000$ \\
        & ($6.565 \pm 0.123$) & ($4.365 \pm 0.014$) & ($2.808 \pm 0.070$) & ($1.750 \pm 0.000$) \\
    \hline
    \hline
    \end{tabular}
    \vspace{0.2cm}
    \caption{Sensitivity analysis for the hyperparameters in Section \ref{sec: experiments_toy}. Top line in each cell represents the optimality gap $\|y^*_g(x_T) - y_{F,T}\|$, while the bottom line represents the time required for the algorithm to converge. Both the mean and the standard deviation are for 40 simulations.}
    \label{tab: sensitivity toy}
    \vspace{-0.3cm}
\end{table}

From the table, we can draw the following empirical observations:
\begin{itemize}
    \item[\bf O1)] \textit{Larger values of $\gamma$ bring the upper-level objectives closer to optimal}. This is consistent with Theorem \ref{thm: lagrangian reformulation of penalty} which illustrated that larger $\gamma$ improves the accuracy of the lower-level optimality. Since the obtained solution $y_{F,T}$ is closer to $y_g^*(x_T)$ for larger $\gamma$ values, the distance between the $f(x_T,y_{F,T})$ and $f(x_T,y^*_g(x_T))$ will be closer as well. 

    \item[\bf O2)] \textit{For a sufficiently small fixed $\eta$, larger $\gamma$ lead to faster convergence}. This implies that \textit{smaller $\eta\leq \frac{1}{l_{F,1}}$ choice due to larger $\gamma$ will not significantly dampen the convergence time}. This is because a large value of $\gamma$ increases $l_{F,1}$, sharpening the function $F_{\gamma}(x)$ and its gradient will be larger for most points. Thus, the gradient update $\eta \nabla F_{\gamma}(x_t)$ will not be very small and thus won't make the convergence slower.
\end{itemize}

\subsection{Sensitivity Analysis for the 3-node network}

We present in Table \ref{tab: sensitivity tp} the results of the upper-level objective value $f(x,y)$ in network planning problem for a 3-node network, whose detailed framework is introduced in Section \ref{appendix:transportation}.

\begin{table}[H]
    \centering
    \fontsize{11}{12}\selectfont
    \resizebox{\textwidth}{!}{%
    \begin{tabular}{c||c|c|c|c|c|c|c|c|c|c}
    \hline
    \hline
        & \multicolumn{10}{c}{\fontsize{13}{14}\selectfont $\-f(x,y)$} \\
        $\eta$ &$\gamma = 1$ &$\gamma = 2$ &$\gamma = 3$ &$\gamma = 4$ &$\gamma = 5$ &$\gamma = 6$ &$\gamma = 7$ &$\gamma = 8$ &$\gamma = 9$ & $\gamma =10$ \\
    \hline
    \hline
        \multirow{2}{*}{$1.6 \times 10^{-4}$} & $-0.9625 \pm 0.76$ & $-1.4274 \pm 0.28$ & $-1.6081 \pm 0.27$ & $-1.4507 \pm 0.41$ & $0.0000 \pm 0.00$ & $0.0000 \pm 0.00$ & $0.0000 \pm 0.00$ & $0.0000 \pm 0.00$ & $0.0000 \pm 0.00$ & $0.0000 \pm 0.00$ \\
        & $13.99 \pm 6.32$ & $5.43 \pm 1.94$ & $7.85 \pm 2.92$ & $13.02 \pm 4.47$ & $5.85 \pm 1.77$ & $4.51 \pm 1.72$ & $3.06 \pm 0.97$ & $2.71 \pm 0.95$ & $2.18 \pm 0.81$ & $1.77 \pm 0.68$ \\
    \hline
        \multirow{2}{*}{$3.2 \times 10^{-5}$} & $-0.9649 \pm 0.76$ & $-1.4768 \pm 0.22$ & $-1.6081 \pm 0.27$ & $-1.6214 \pm 0.27$ & $-1.4582 \pm 0.41$ & $-1.3772 \pm 0.44$ & $-1.1278 \pm 0.69$ & $0.0000 \pm 0.00$ & $0.0000 \pm 0.00$ & $0.0000 \pm 0.00$ \\
        & $51.85 \pm 12.12$ & $15.61 \pm 6.33$ & $22.56 \pm 6.29$ & $20.83 \pm 8.59$ & $22.48 \pm 7.89$ & $29.13 \pm 10.19$ & $0.56 \pm 0.31$ & $14.38 \pm 3.63$ & $7.59 \pm 2.84$ & $6.05 \pm 2.12$ \\
    \hline
        \multirow{2}{*}{$1.6 \times 10^{-5}$} & $-1.0461 \pm 0.66$ & $-1.4744 \pm 0.22$ & $-1.6081 \pm 0.27$ & $-1.6214 \pm 0.27$ & $-1.4582 \pm 0.41$ & $-1.3772 \pm 0.44$ & $-1.3808 \pm 0.45$ & $-1.2106 \pm 0.73$ & $-0.6792 \pm 0.59$ & $0.0000 \pm 0.00$ \\
        & $21.26 \pm 12.85$ & $30.82 \pm 8.59$ & $46.02 \pm 9.89$ & $35.98 \pm 11.00$ & $32.53 \pm 9.90$ & $32.37 \pm 11.80$ & $40.79 \pm 12.98$ & $40.31 \pm 13.95$ & $1.47 \pm 0.57$ & $17.27 \pm 4.27$ \\
    \hline
        \multirow{2}{*}{$3.2 \times 10^{-6}$} & $-0.7819 \pm 0.68$ & $-1.4725 \pm 0.22$ & $-1.6081 \pm 0.27$ & $-1.6214 \pm 0.27$ & $-1.6297 \pm 0.27$ & $-1.6355 \pm 0.27$ & $-1.4671 \pm 0.42$ & $-1.4700 \pm 0.42$ & $-1.3856 \pm 0.45$ & $-1.3873 \pm 0.45$ \\
        & $888.38 \pm 256.38$ & $213.22 \pm 33.88$ & $335.63 \pm 43.42$ & $253.26 \pm 46.92$ & $210.03 \pm 44.41$ & $206.14 \pm 48.21$ & $172.50 \pm 43.31$ & $153.57 \pm 43.90$ & $138.12 \pm 38.92$ & $117.68 \pm 34.65$ \\
    \hline
        \multirow{2}{*}{$1.6 \times 10^{-6}$} & $-0.9665 \pm 0.63$ & $-1.4723 \pm 0.23$ & $-1.6840 \pm 0.03$ & $-1.6214 \pm 0.27$ & $-1.6297 \pm 0.27$ & $-1.6355 \pm 0.27$ & $-1.6397 \pm 0.27$ & $-1.4700 \pm 0.42$ & $-1.4722 \pm 0.42$ & $-1.4741 \pm 0.42$ \\
        & $805.74 \pm 268.20$ & $377.37 \pm 54.25$ & $629.90 \pm 65.16$ & $496.31 \pm 69.79$ & $464.33 \pm 68.94$ & $418.23 \pm 67.45$ & $390.74 \pm 65.78$ & $356.06 \pm 60.70$ & $334.36 \pm 61.03$ & $305.70 \pm 57.01$ \\
    \hline
    \hline
\end{tabular}
}
\vspace{0.2cm}
\caption{Sensitivity analysis for hyperparameters $\eta$ and $\gamma$ in the experiment of the 3-node network transportation design described in Section \ref{sec: experiments_tp} and Appendix \ref{appendix:transportation}. Top line in each cell represents the upper-level objective value $f(x_T, y_{g,T})$, while the bottom line represents convergence time. Both the mean and the standard deviation of 10 simulations are provided.}
\label{tab: sensitivity tp}
\end{table}
\vspace{-0.3cm}

We know from Section \ref{appendix:transportation} that the goal is to minimize $f(x,y)$, and negative values of $f(x,y)$ are expected. Therefore, $f(x_T,y_T) = 0$ in the table indicates a failure to converge and we can see that 
\begin{itemize}
    \item[\bf O3)] \textit{Smaller $\eta$ is needed to satisfy $\eta \leq \frac{1}{l_{F,1}}$ to ensure convergence if $\gamma$ is chosen larger.} The algorithm tends to converge for different $\gamma$ values when $\eta$ is small enough. This is because the Lipschitz smoothness constant $l_{F,1}$ of $F_{\gamma}(x)$ increases with $\gamma$ (Lemma \ref{lemma: derivative of F}), and the BLOCC algorithm is guaranteed to converge if $\eta \leq \frac{1}{l_{F,1}}$ (Theorem \ref{thm: main result}). %Therefore, for smaller stepsizes $\eta$, the variation of either the upper-level objective value $f(x_T,y_{g,T})$ or the convergence time with respect to $\gamma$ is more monotonic and smoother.
\end{itemize}

In summary, the experimental results align with the theoretical analysis, demonstrating that the penalty constant \(\gamma\) is robust and can be effectively set around 10, and the stepsize should be adjusted to ensure smooth and monotonic convergence.

\section{Analysis of the Computational Complexity }
\label{appendix: Complexity Analysis}

\subsection{Complexity comparison}
We present in the following the computational complexity of our BLOCC algorithm in comparison with LV-HBA \citep{yao2024constrained} and GAM \citep{xu2023efficient} as baselines.

\begin{table}[h!]
\centering
\fontsize{9}{9}\selectfont
\begin{tabular}{l||c|c}
\hline\hline
\textbf{Method} & \textbf{Iteration Cost} & \textbf{Computational Cost per Iteration} \\ 
\hline\hline
\textbf{BLOCC} & Outer loop (Algorithm \ref{alg: BLOCC}): & \textbf{General setting (Theorem \ref{thm: PDGM})}: $\tilde{\mathcal{O}}(d_c d_x + d_x^2 + \epsilon^{-1}(d_c d_y + d_y^2))$; \\ 
& $T = \mathcal{O}(\epsilon^{-1.5})$&\textbf{Special case (Theorem \ref{thm: linear convergence_PDGM})}: $\tilde{\mathcal{O}}(d_x^2 + d_y^2 + d_c(d_x + d_y))$ \\
\hline
\textbf{LV-HBA} & $\mathcal{O}(\epsilon^{-3})$ & $\mathcal{O}(d_x d_y d_c + (d_x + d_y)^{3.5})$ \\ \hline
\textbf{GAM} & Asymptotic & More than $\mathcal{O}(d_y^3 + d_x^2 + d_c(d_x + d_y))$ \\ 
\hline\hline
\end{tabular}
\vspace{0.1cm}
\caption{Complexity comparison of our work with LV-HBA \citep{yao2024constrained} and GAM \citep{xu2023efficient}.}
\label{tab: complexity comparison}
\vspace{-0.3cm}
\end{table}

The iteration costs are detailed in the earlier sections, and in \citep{yao2024constrained} and \citep{xu2023efficient}. Hence, we discuss next the per-iteration cost. In the following discussion, we assume 
\begin{itemize}
    \item[\bf A1)] \textit{the complexity of calculating a function is proportional to the dimension of the inputs}. For example, The complexity of finding $\nabla_x g(x,y), \nabla_x f(x,y)$ are $\mathcal{O}(d_x)$, and the one for $g^c(x,y)\rangle$ is $\mathcal{O}(d_x d_y)$.
    \item[\bf A2)] \textit{Projection cost on $\mathcal{X}$ and $\mathcal{Y}$ is respectively no more than $\mathcal{O}(d_x^2)$ and $\mathcal{O}(d_y^2)$.} As discussed in the introduction, $\mathcal{X}$ and $\mathcal{Y}$ are assumed to be easy-to-project domains, i.e. projection can be done using a projection matrix or a simple formula, and the projection costs are no more than $\mathcal{O}(d_x^2)$ and $\mathcal{O}(d_y^2)$, respectively.
\end{itemize}

In this way, \textbf{BLOCC} is a first-order method with gradient calculation costs of $\mathcal{O}(d_x d_c)$ and $\mathcal{O}(d_y d_c)$, and the projection cost of $\mathcal{O}(d_x^2)$ and $\mathcal{O}(d_y^2)$. We present the detailed analysis of achieving the computational cost in the table in the following Section \ref{sec: Complexity analysis of BLOCC in two settings}.

In the SVM model training and network planning experiments in Section \ref{sec: experiments}, projections are simpler (e.g., truncation), resulting in $\mathcal{O}(d_x)$ and $\mathcal{O}(d_y)$ costs. In this scenario, for the general setting (Theorem \ref{thm: PDGM}), the complexity is $\tilde{\mathcal{O}}(\epsilon^{-1.5}d_c d_x + \epsilon^{-2.5}d_c d_y)$; and for the $g^c$ affine in $y$ setting (Theorem \ref{thm: linear convergence_PDGM}), it is $\tilde{\mathcal{O}}(\epsilon^{-1.5} d_c (d_x+d_y))$. Therefore, our BLOCC is especially robust to large-scale problems.

\textbf{LV-HBA} \citep{yao2024constrained}, also a first-order method, has similar gradient calculation costs. However, its projection onto $\{\mathcal{X} \times \mathcal{Y} : g^c(x, y) \leq 0\}$ is expensive, with a complexity of $\mathcal{O}((d_x + d_y)^{3.5})$ of using interior point method to find the projected point \citep{karmarkar1984new}, and evaluating $g^c(x, y) \leq 0$ adds $\mathcal{O}(d_x d_y d_c)$ as it requires $d_c$ inequality judgment on functions taking input dimension $d_x, d_y$.

\textbf{GAM} \citep{xu2023efficient} lacks an explicit algorithm for lower-level optimality and Lagrange multipliers. Even if we omit this, calculating $\nabla_{yy}^2 g$ and its inverse incurs a cost of $\mathcal{O}(d_y^3)$. Additionally, it has cost $\mathcal{O}(d_x^2 + d_c(d_x + d_y))$ for projection onto $\mathcal{X}$ and calculating gradients.

We can see that BLOCC stands out with the lowest computational cost in both iterational and overall complexity thanks to its first-order and joint-projection-free nature. Consequently, BLOCC is particularly well-suited for large-scale applications with high-dimensional parameters.

\subsection{Complexity analysis of BLOCC}
\label{sec: Complexity analysis of BLOCC in two settings}
At each iteration $t$, our proposed BLOCC algorithm in Algorithm \ref{alg: BLOCC} involves:
\begin{itemize}
    \item[\bf Step 1:] Solve two max-min problems.
    \item[\bf Step 2:] Calculate $g_{F,t}$ in \eqref{eq: gt} and update $x_{t+1} = \operatorname{Proj}_\mathcal{X}\big(x_t-\eta g_{F,t}\big)$.
\end{itemize}

where \textbf{Step 1} can be achieved by our proposed max-min solver in Algorithm \ref{alg: Primal-Dual Gradient Method}. In the following, we provide analysis for using the accelerated version of the Algorithm \ref{alg: Primal-Dual Gradient Method} in the \textit{general case} (Theorem \ref{thm: PDGM}) and the single-loop version for the \textit{special case} (Theorem \ref{thm: linear convergence_PDGM}) as discussed in Section \ref{sec: BLO with coupled LL inequality constraint affine in y}.

In the \textit{general case}, we use the accelerated version of Algorithm \ref{alg: Primal-Dual Gradient Method} to solve both \eqref{eq: y_g,mu_g} and \eqref{eq: y_F,mu_F}. For solving \eqref{eq: y_g,mu_g}, at each inner loop iteration $t$, line \ref{step: mu intermediate update} of the algorithm is of computational cost $\mathcal{O}(d_c)$. The update of $y$ in line \ref{step: y update begin}-\ref{step: y update end} involves calculating $\nabla_y g(x,y)$ with a cost of $\mathcal{O}(d_y)$, finding $\langle \mu, \nabla_y g^c(x,y)\rangle$ for fixed $x$ of $\mathcal{O}(d_y d_c)$ following assumption {\bf A1)}, and the projection $\operatorname{Proj}_\mathcal{Y}$ of complexity $\mathcal{O}(d_y^2)$ according to {\bf A2)}. Moreover, $y$ converges linearly as $L(\mu,y)$ is strongly convex in $y$ (Lemma \ref{lemma: bubeck2015convex}), this inner update for $y$ gives an iteration complexity of $\mathcal{O}(\ln(\epsilon^{-1}))$. Therefore, the cost of the update for $y$ totals up to $\mathcal{O}(\ln(\epsilon^{-1})(d_y d_c + d_y^2))$. The update of $\mu$ in line \ref{step: mu update} involves $\mathcal{O}(d_y d_c)$ for calculating $\nabla_\mu L(\mu_{t+1/2}, y_{t+1}) = g^c(x,y_{t+1})$ with $x$ being fixed, and $\mathcal{O}(d_c)$ for projection onto $\mathbb{R}_+^{d_c}$.

Moreover, to achieve target accuracy $\epsilon$ on the metric $\frac{1}{T}\sum_{t=0}^T\| G_{\eta}(x_t)\|^2 \leq \epsilon$, with $\gamma = \mathcal{O}(\epsilon^{-1})$ using the BLOCC algorithm, the iteration complexity of the max-min solver is $\mathcal{O}(\epsilon^{-1})$ according to Theorem \ref{thm: PDGM}.
Therefore, the complexity for solving \eqref{eq: y_g,mu_g} in the \textit{general case} using the accelerated version of Algorithm \ref{alg: Primal-Dual Gradient Method} (Theorem \ref{thm: PDGM}) is 
\begin{align}
    \mathcal{O}(\epsilon^{-1}(\ln(\epsilon^{-1})(d_y d_c + d_y^2) + d_c d_y + d_c)) = \tilde{\mathcal{O}}(\epsilon^{-1}(d_y d_c + d_y^2)).
\end{align}

Solving \eqref{eq: y_F,mu_F} is of the same order as the only difference is in calculating $\nabla_y f(x,y)$. In this way, we can conclude that {\bf Step 1} is at a cost of $ \tilde{\mathcal{O}}(\epsilon^{-1}(d_y d_c + d_y^2))$ in the \textit{general case}.

In the \textit{special case} (Theorem \ref{thm: linear convergence_PDGM}), the non-accelerated (single-loop) version of Algorithm \ref{alg: Primal-Dual Gradient Method} involves $y_{t+1} = \operatorname{Proj}_\mathcal{Y}(y_{t} - \eta_1 \nabla_y L(\mu_{t}, y_{t}))$ with complexity $\mathcal{O}(d_y^2 + d_y d_c)$, and $\mu_{t+1} = \operatorname{Proj}_{\mathbb{R}_+^{d_c}}(\mu_{t} + \eta_2 \nabla_\mu L(\mu_{t}, y_{t+1}))$ with complexity $\mathcal{O}(d_c + d_y d_c)$ following a similar analysis as in the general case.

Moreover, the algorithm converges linearly in the special case of $g^c$ being affine in $y$ and the computational complexity is $\mathcal{O}(\ln(\epsilon^{-1}))$, according to Theorem \ref{thm: linear convergence_PDGM}. Therefore, the complexity for the max-min {\bf Step 1} in the \textit{special case} is
\begin{align}
    \mathcal{O}(\ln(\epsilon^{-1})(d_y d_c + d_y^2 + d_c)) = \tilde{\mathcal{O}}(d_y d_c + d_y^2 + d_c).
\end{align}

\textbf{Step 2} involves calculating $\nabla_x g(x,y)$ with with a fixed $y$, and $\langle \mu, \nabla_x g^c(x,y) \rangle$ for fixed $\mu, y$, which are of complexity $\mathcal{O}(d_x)$ and $\mathcal{O}(d_x d_c)$ respectively, and conducting a projection on $\mathcal{X}$ of cost $\mathcal{O}(d_x^2)$ according to assumption {\bf A2)}.

In this way, the computational complexity of \textbf{Step 2} is 
\begin{align}
    \mathcal{O}(d_x d_c + d_x^2).
\end{align}

We can therefore conclude the complexity of BLOCC in Table \ref{tab: complexity comparison}.

\end{document}